\documentclass[12pt,leqno]{amsart}
\usepackage{graphicx}%
\usepackage{multirow}%
\usepackage{amsmath,amssymb,amsfonts,mathtools}%
\usepackage{amsthm}%
\usepackage{mathrsfs}%
\usepackage[title]{appendix}%
\usepackage{xcolor}%
\usepackage{textcomp}%
\usepackage{manyfoot}%
\usepackage{booktabs}%
\usepackage{algorithm}%
\usepackage{algorithmicx}%
\usepackage{algpseudocode}%
\usepackage{listings}%
\usepackage{bbm}
\usepackage[pagebackref=true, colorlinks=true, citecolor=blue]{hyperref}

\theoremstyle{thmstyleone}%
\newtheorem{theorem}{Theorem}%
\newtheorem{remark}{Remark}%
\newtheorem{definition}{Definition}%

\definecolor{ao}{rgb}{0.0, 0.5, 0.0}
\newcommand{\bn}{{\bf n}}
\newcommand{\sU}{{\mathscr{U}}}

\newcommand{\cred}{\color{black}}
\newcommand{\sB}{\mathcal B}
\newcommand{\sD}{\mathscr D}

\newcommand{\Tr}{\text{Tr}}
\newcommand{\bQ}{{\bf Q}}

\newcommand{\sC}{\mathcal C}

\newcommand{\sO}{\mathcal O}

\newcommand{\sS}{\mathcal S}

\newcommand{\sY}{\mathcal Y}

\newcommand{\ep}{\varepsilon}

\newcommand{\sL}{\mathscr{L}}
\newcommand{\sF}{\mathcal F}
\newcommand{\bu}{{\bf u}}
\newcommand{\bq}{{\bf q}}
\newcommand{\bP}{{\mathbb P}}
\newcommand{\bw}{{\bf w}}
\newcommand{\bW}{{\bf W}}
\newcommand{\br}{{\bf r}}

\newcommand{\bE}{{\mathbb E}}

\newcommand{\bD}{{\bf D}}
\newcommand{\bH}{{\bf H}}
\newcommand{\bC}{{\bf C}}


\newcommand{\bU}{{\bf U}}
\newcommand{\bL}{{\bf L}}
\newcommand{\sW}{\mathscr{W}}
\newcommand{\sV}{\mathscr{V}}

\newcommand{\cadlag}{c\`{a}dl\`{a}g~}

\renewcommand{\tilde}{\widetilde}

\newtheorem{assm}{Assumption}[section]
\newtheorem{lem}[theorem]{Lemma}
\newtheorem{cor}[theorem]{Corollary}

\numberwithin{theorem}{section} 

\newcommand{\bx}{{\bf x}}
\newcommand{\be}{{\bf e}}

\newcommand{\R}{\mathbb{R}}

\usepackage[foot]{amsaddr}
\usepackage[margin=1in]{geometry}

\begin{document}

	\title[Stochastic FSI problem]{Existence of martingale solutions to a nonlinearly coupled stochastic fluid-structure interaction problem}
	
	\author[K. Tawri, S. \v{C}ani\'{c}]{Krutika Tawri$^{1}$ and Sun\v{c}ica \v{C}ani\'{c}$^1$}
	
	\address{\newline	$^1$ Department of Mathematics, University of California Berkeley, CA, USA.}
	
	\email{ktawri@berkeley.edu (Krutika Tawri), canics@berkeley.edu (Sun\v{c}ica \v{C}ani\'{c})}

\begin{abstract}
	In this paper we study a {nonlinear} stochastic fluid-structure interaction problem {with}  a multiplicative, white-in-time noise. The problem consists of the  Navier-Stokes equations describing the flow of an incompressible, viscous fluid {in a 2D cylinder} interacting with {an elastic lateral wall whose elastodynamics is described by a membrane/shell equation}.  {The flow is driven by the inlet and outlet data, and by the stochastic forcing.} The stochastic noise is applied both to the fluid equations as a volumetric body force, and to the structure as an external forcing to the deformable fluid boundary. The fluid and the structure are {\bf{nonlinearly coupled}} via the	kinematic and dynamic conditions {assumed at the moving interface, which is a random variable not known a priori}. {The geometric nonlinearity due to the nonlinear coupling requires the development of new techniques to capture martingale solutions for this class of  stochastic fluid-structure interaction problems.} We introduce a constructive approach based on a Lie splitting scheme and prove the existence of martingale solutions to the system. To the best of our knowledge, this is {the first result in the field of stochastic PDEs that addresses existence of solutions on moving fluid domains {involving incompressible viscous fluids}, {where} {\bf{the displacement of the boundary {and} the fluid domain {are {\bf{random variables}} that} {are} not known a priori and {are parts} of the solution itself}}}. \\

\noindent Keywords: Stochastic moving boundary problems, Fluid-structure interaction, martingale solutions\\
\noindent MSC: 60H15, 35A01
\end{abstract}

\maketitle

\section{Introduction}\label{sec1}

This paper presents a constructive method to investigate solutions of a {nonlinearly coupled} stochastic fluid-structure interaction (FSI) {problem}. 
The focus is on a stochastically forced benchmark problem involving a linearly elastic {membrane/shell} that interacts with a {two dimensional flow of a} viscous, incompressible Newtonian fluid  {across a moving interface}. 
{This benchmark problem incorporates the main mathematical difficulties associated with nonlinearly coupled stochastic FSI problems.} The fluid is modeled by the 2D Navier-Stokes equations, while the membrane is modeled by the linearly elastic {membrane/shell} equations. {The fluid and the structure are coupled across the {\bf{moving interface}}}
via a two-way coupling that ensures continuity of velocities and continuity of contact forces at the fluid-structure interface. 
The {stochastic noise is applied both to the fluid equations as a volumetric body force, and to the structure as an external forcing to the deformable fluid boundary. The noise} is
multiplicative depending on the structure displacement, structure velocity and the fluid velocity, {and it is of the form $G(\bu,v,\eta) {dW}$, where 
	$W$ is a Wiener process. 
	The main result of this manuscript is a constructive proof of the existence of weak martingale solutions to this nonlinear stochastic fluid-structure interaction problem. 
	Namely, we prove the existence of solutions that are weak in the analytical sense and in the probabilistic sense.
	In other words, we show that
	despite the roughness,  the underlying nonlinear deterministic fluid-structure interaction problem is robust to noise.}
To the best of our knowledge, this is {the first result in the field of stochastic PDEs that
	addresses the question of existence of solutions on moving domains {involving incompressible fluids}, {where} {\bf{the displacement of the boundary {and} the motion of the fluid domain {are {\bf{random variables}} that}
			{are} not known a priori and {are parts} of the solution itself}}}. 

There are {many applications}  that suggest and provide  evidence to treating moving surfaces as random boundaries in order to deal with geometric uncertainties due to insufficient data or measurement errors, see e.g. \cite{CF09} in the context of wind-engineering and \cite{StochasticHeartFSI} {in the context of blood flow.} {In particular, the systolic and diastolic rhythm of the heart has a strong stochastic component, giving rise to stochasticity in FSI describing the flow of blood in coronary arteries that sit on the surface of the heart. In fact, in \cite{StochasticHeartFSI} it was shown that the stochastic fluctuations of the single ion channel and the sub-cellular dynamics in tissue and organ scale get reflected in the macroscopic random cardiac events \cite{StochasticHeartFSI}, which should be modeled using stochastic partial differential equations to capture such phenomena. }
In general, studying stochastic FSI  is important because well-posedness of stochastic FSI models  provides confidence that the deterministic FSI models are, indeed, robust to stochastic noise that occurs naturally in real-life problems.

Besides its applications, {the problem we consider in this manuscript} is interesting from a mathematical analysis point of view due to the challenges arising from the random nature of the moving fluid domains. {In this manuscript we develop techniques to deal with these challenges in a constructive proof of the existence of
	martingale solutions to this class of stochastic moving boundary problems.}

The first constructive existence
proof {{for}} a {{\bf{deterministic}} moving boundary problem {\bf{with a given moving boundary}}} was presented by Ladyzhenskaya in \cite{Lad70}.
{The proof was based on 
	using a time-discretization approach, known as Rothe's method, to construct solutions.} 
{In the context of deterministic fluid-structure interaction problems defined on  moving domains {\bf{not known a priori}} 
	a Rothe's-type method was first implemented in \cite{MC13} to construct weak solutions.}
{Rothe's  method was then extended to prove the existence of weak solutions to a number of different FSI problems including a FSI problem with 
	multi-layered  structures \cite{MC14} and a FSI problem  with
	the Navier-slip condition at the fluid-structure interface \cite{MC16}.
}

In terms of stochastic PDEs describing incompressible, viscous fluids, the existence of solutions to the stochastic Navier-Stokes equations and their properties have been the subject of numerous studies, see, e.g.,  \cite{BT73,DPD02,FG95}.
{However, in terms of analysis of stochastic {\bf{fluid-structure interaction}} involving incompressible fluids, to the best of our knowledge, there is only
	one work that addresses questions of existence of solutions to such problems \cite{KC23}. More precisely, in  \cite{KC23} the authors prove 
	the 	existence of a pathwise solution to a {\bf{linearly coupled}} FSI model, where the fluid and structure coupling conditions are evaluated along a {\bf{fixed fluid structure interface}}, with a stochastically forced membrane. The present work is a  nontrivial extension of the results from \cite{KC23} to the nonlinearly coupled case, and to the case in which a stochastic forcing is applied not only to structure equations, but also to the bulk fluid in the form of a stochastic fluid body force.} The difficulties faced in the present manuscript are unique since in \cite{KC23} the fixed geometry in the problem does not lead to the same considerations and issues as the ones in this manuscript that arise due to the random and time-dependent motion of the fluid domain. Additionally, in \cite{KC23}, the time-dependent Stokes equations are used to model the fluid flow whereas {in the present paper} we consider the Navier-Stokes equations.

{The proof in the present manuscript is based on}  discretizing the problem in time by partitioning the time interval $(0,T)$ and constructing approximate solutions using an operator splitting Lie scheme. For time-splitting methods for stochastic equations see e.g. \cite{BGR92}, \cite{CHP12} and the references therein. 
At each time step, our multi-physics problem
is split into two subproblems: the structure subproblem and the fluid subproblem. { In the first subproblem, the structure displacement and the structure velocity are updated while keeping the fluid velocity the same. In the second subproblem, keeping the structure displacement unchanged, the fluid velocity is updated while also ensuring that the kinematic coupling condition is satisfied. Based on this splitting scheme, uniform energy estimates in expectation are derived. To deal with the highly nonlinear nature of the problem, we then employ compactness arguments { by obtaining uniform estimates on fractional time derivatives of order $<\frac12$ of the approximate solutions. } 
	We then obtain the necessary almost sure convergence results for the approximate solutions/stochastic processes by using a variant of the Skorohod Representation theorem given in \cite{VW96} which gives the existence and characterization of a (possibly new) probability space and a sequence of random variables with the same laws {{as the original variables}} that converge almost surely
	{{to a martingale solution of the original problem}}.} 

{The {\bf{main new component of the present work}} is the treatment of the geometric nonlinearity associated with the stochastic fluid domain motion. 
	The {\bf{main mathematical issues}}  are related to the following: (1) the {\bf{fluid domain boundary is a random variable, not known a priori}}, 
	(2) the {\bf{fluid domain can possibly degenerate in a random fashion}}, e.g., the top boundary can touch the bottom of the fluid domain in a random fashion, and (3) 
	the {\bf{incompressibility condition}} gives rise to the difficulties in the construction of appropriate test functions on stochastic moving domains.
	While similar issues arise in the deterministic case as well, their resolution in the stochastic case is remarkably different. In particular, to deal with the stochastic motion of the fluid domain 
	we map the equations {{defined}} on stochastic moving domains onto a fixed reference domain via the Arbitrary Lagrangian-Eulerian (ALE) mappings, which are random variables defined 
	{{pathwise}}.  The use of the ALE mappings and the analysis that follows is valid for as long as the compliant tube walls do not touch each other, i.e., until there is a loss of strict positivity of the Jacobian of the ALE transformation. {\emph{Handling this no-contact condition in the stochastic case}} requires a delicate approach, which in this manuscript is done using a {\emph{cut-off function and a stopping time argument}}. The cut-off function artificially maintains a minimum distance of $\delta>0$ between the walls of the tube, preventing them from coming into contact while also providing the required regularity for any realization of the structure. This is crucial as it provides a {\it deterministic} lower (and upper) bound for the artificial structure displacement. The stopping time argument is then developed to 
	show that this cut-off function does not ``kick in'' until some stopping time which is {\bf{strictly positive}} almost surely. 
	Thus, we show that there exists an a.s. positive time until the original FSI problem has a martingale solution. 
}

{Finally, the {\bf{incompressibility condition}} is a challenge associated with the construction of test functions, which depend on the random moving domains, when they are mapped onto a fixed domain. 
	This problem arises, { among other places}, in the application of} the Skorohod representation theorem to upgrade convergence results. 

Using deterministic techniques, which are based on constructing appropriate test functions associated with approximate {\sl{deterministic}} moving domains, is
not applicable here.  To overcome this difficulty 
we {introduce} an {\emph{augmented system, which is not divergence free, but is an approximation of the original divergence-free system 	due to an added	singular term that penalizes the transformed divergence}} using the parameter $\ep>0$. { While this gets around the difficulties related to working with random test functions and random phase spaces, addition of the penalty term together with the low expected temporal regularity of the solutions create problems in obtaining compactness (tightness) results. Hence, we apply non-standard compactness arguments by constructing appropriate test functions that result into bounds on fractional time derivative of the solutions that are independent of $\ep$ and the time-step. By doing so, we reveal some hidden time regularity of the weak solutions to our FSI problem. Our tightness argument also bypasses the need for estimates on higher order moments of the solutions.} 
This new tightness argument is one of the novelties of this manuscript. 
We then show that the solutions to this approximate system indeed converge to a desired solution of the limiting equations as $\ep\rightarrow 0$

{ Since the {\bf{stochastic forcing}} appears not only in the structure equations but also in the {\bf{fluid equations themselves}}, we come across additional difficulties, which are associated with the construction of the 
	appropriate "test processes" on the approximate and limiting moving fluid domains.  Namely, along with the required divergence-free property on these domains, the test functions also have to satisfy appropriate measurability properties. We construct these approximate test functions in step 1 of the proof of Theorem \ref{exist2} by first constructing a Carath\'eodory function that gives the definition of a test function for the limiting equations
	 and then by "squeezing" this limiting test function in a way that preserves its desired properties  on the approximate domains,
	 see \eqref{r_ep} and \eqref{q_ep}.}

{Once these issues are taken care of, the final martingale solutions are obtained from our time-marching scheme in two steps.
	First, we consider the limits as the time step converges to zero
	to obtain solutions which satisfy an $\ep$ approximation of the the incompressibility condition.
	Then,  the penalty parameter $\ep$ is let go to zero to 
	obtain the martingale solutions to the original problem.  
}

\section{Problem setup}\label{sec:det_setup}

\subsection{The deterministic model and a weak formulation}
{We consider the flow of an incompressible, viscous fluid in a two-dimensional compliant cylinder with a  deformable lateral boundary whose 
	time-dependent motion is described by 
	a one-dimensional membrane/shell equation capturing displacement only in the vertical direction. The left and the right boundary of the cylinder
	are the inlet  and outlet for the time-dependent fluid flow. 
	We assume ``axial symmetry'' of the data and of the flow, rendering the central horizontal line as the axis of symmetry.
	This allows us to  consider the flow only in the upper half of the domain, with the bottom boundary fixed and equipped with the symmetry boundary conditions. 
	See Fig.~\ref{domain}.
	
	\begin{figure}[htp!]
\center
                    \includegraphics[width = 0.5 \textwidth]{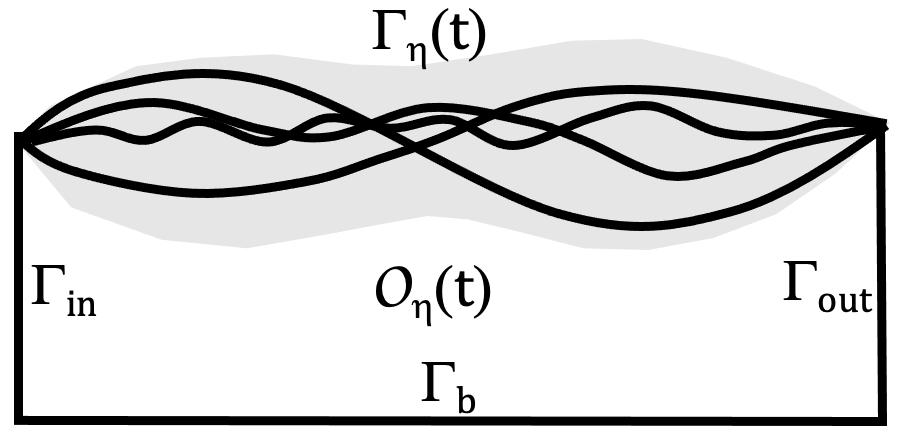}
  \caption{{\small\emph{A sketch of the fluid domain $\sO_{\eta}(t)$ with the elastic lateral boundary $\Gamma_\eta(t)$, the inlet and outlet boundaries
  $\Gamma_{in}$ and $\Gamma_{out}$, and the bottom (symmetry) boundary $\Gamma_b$. 
  The lightly shaded region represents a confidence interval of where the structure is likely to be.}}}
\label{domain}
\end{figure}

	The} time-dependent fluid domain, whose displacement is not known a priori will be denoted by 
$$\sO_{\eta}(t)=\{{\bx}=(z,r)\in\mathbb{R}^2:z\in (0,L), r \in (0,R+\eta(t,z))\},$$ where the top lateral boundary is given by $$\Gamma_{\eta}(t)=\{(z,r)\in\mathbb{R}^2:z\in (0,L), r =R+\eta(t,z)\}.$$

The inlet, outlet and bottom boundaries are	$\Gamma_{\text{in}}=\{0\}\times (0,R),\Gamma_{\text{out}}=\{L\}\times (0,R),\Gamma_b=(0,L)\times \{0\}$ respectively. \\

{\bf The fluid subproblem:}		
The fluid flow is modeled by the incompressible Navier-Stokes equations in the 2D time-dependent domains $\sO_{\eta}(t)$ 
\begin{equation}
\begin{split}\label{u}
\partial_t\bu + (\bu\cdot \nabla)\bu &= \nabla \cdot \sigma + F^{ext}_u\\
\nabla \cdot \bu&=0,
\end{split}
\end{equation}
where $\bu=(u_z,u_r)$ is the fluid velocity. The Cauchy stress tensor is $\sigma=-p I+2\nu \bD(\bu)$ where $p$ is the fluid pressure, $\nu$ is the kinematic viscosity coefficient and $\bD(\bu)=\frac12(\nabla\bu+(\nabla\bu)^T)$ is the symmetrized gradient. Here $F_u^{ext}$ represents any external forcing impacting the fluid,
which in this work will be stochastic.
	The fluid flow is driven by dynamic pressure data given at the inlet and the outlet boundaries as follows:
\begin{equation}
\begin{split}\label{bc:in-out}
p +\frac12 |\bu|^2&=P_{in/out}(t),\\
u_r&=0 \quad\text{ on } \Gamma_{in/out}.
\end{split}	\end{equation}
Whereas on the bottom boundary $\Gamma_b$ we prescribe the symmetry boundary condition:
\begin{align}
u_r=\partial_r u_z=0 \quad\text {on } \Gamma_b.\label{bc:bottom}
\end{align}		
{\bf The structure subproblem:} The elastodynamics problem is given
by the linearly elastic Koiter shell equations that describe the {vertical or }radial displacement
$\eta$:
\begin{align}\label{eta}
\partial^2_{t}\eta- \partial^2_{z}\eta +\partial^4_{z}\eta = F_\eta \quad \text{ in } (0,L),
\end{align}
where $F_{\eta}$ is the vertical component of the total force experienced by the structure.		{As we shall see below, $F_\eta$ will correspond to the difference between the fluid traction on one side and external random forcing on the other. }	
The above equation is supplemented with the following boundary conditions:
\begin{equation}\label{bc:eta}
\begin{split}
\eta(0)=\eta(L)=\partial_z\eta(0)=\partial_z\eta(L)=0.
\end{split}
\end{equation}
{\bf The non-linear fluid-structure coupling}: The coupling between the fluid and the structure takes place {across the current location of the fluid-structure interface, which is simply the current location of the membrane/shell, described above. We consider a two-way coupling described by }
the kinematic and dynamic coupling conditions that describe continuity of velocity and continuity of normal stress at the fluid-structure interface, respectively:
\begin{itemize}
	\item The kinematic coupling condition is:
	\begin{align}\label{kinbc}
	(0,\partial_t \eta(t,z))=\bu(t,z,R+\eta(t,z)).
	\end{align}
	\item
	The dynamic coupling condition is:
	\begin{align}
	F_\eta=-J(t,z) (\sigma {\bf n})|_{(t,z,\eta(t,z))}\cdot {\be_r} + F_\eta^{ext}\cdot {\be_r},
	\end{align}
	where $\bn$ is the unit outward normal to the top boundary, $\be_r$ is the unit vector in the {vertical/}radial direction and $J(t,z)=\sqrt{1+(\partial_z\eta(t,z))^2}$ is the Jacobian of the transformation from
	Eulerian to Lagrangian coordinates. As earlier, $F^{ext}_\eta$ denotes any external force impacting the structure, which in this work will be a stochastic.
\end{itemize}

This system is supplemented with the following initial conditions:
\begin{align}\label{ic}
\bu(t=0)=\bu_0,\eta(t=0)=\eta_0, \partial_t\eta(t=0)=v_0.
\end{align}
\vskip 0.1in
\noindent

\subsection*{Weak formulation on moving domains}
Before we derive the weak formulation of the deterministic system described in the previous subsection, we define the following relevant function spaces
{for the fluid velocity, the structure, and the coupled FSI problem}:
\begin{align*}
&\tilde\sV_F(t)= \{{\bf u}=(u_z,u_r)\in \bH^{1}(\sO_{\eta}(t)) :
\nabla \cdot {\bf u}=0,u_z=0 \text{ on } \Gamma_{\eta}(t), u_r=0 \text{ on }\partial \sO_{\eta}\setminus \Gamma_{\eta}(t) \}\nonumber\\
&\tilde\sW_F(0,T)=L^\infty(0,T;\bL^2(\sO_\eta(\cdot))) \cap L^2(0,T;\tilde\sV_F(\cdot))\\
&\tilde\sW_S(0,T)=W^{1,\infty}(0,T;L^2(0,L)) \cap L^\infty(0,T;H^2_0(0,L))\\
&\tilde\sW(0,T)=\{(\bu,\eta)\in \tilde\sW_F(0,T)\times\tilde\sW_S(0,T):\bu(t,z,R+\eta(t,z))=\partial_t\eta(t,z){\bf e}_r\}.
\end{align*} 
We will take test functions $(\bq,\psi)$ from the following space:
{
	\begin{equation*}
	\sD^\eta(0,T)=\{({\bf q},\psi) \in C^1([0,T];\tilde\sV_F(\cdot) \times H^2_0(0,L)): {\bf q}(t,z,R+\eta(t,z))=\psi(t,z){\bf e}_r\}.
	\end{equation*}
}

{Next, we derive} a weak formulation of the problem on the moving domains. We begin by considering the fluid equations \eqref{u}. We multiply these equations by $\bq$, integrate in time and space and use Reynold's transport theorem to obtain
\begin{align*}
&(\bu(t), \bq(t))_{\sO_{\eta}(t)}= (\bu(0), \bq(0))_{\sO_{\eta}(0)} +\int_0^t\int_{\sO_{\eta}(s)} \bu(s)\cdot\partial_s\bq(s)d\bx ds \\
&+ \int_0^t\int_{\Gamma_{\eta}(s)}|\bu(s)|^2\bq(s)\cdot \bn(s)dSds
-\int_0^t\int_{\sO_{\eta}(s)}(\bu(s)\cdot \nabla )\bu(s) \bq(s)d\bx ds\\
&- 2\nu\int_0^t \int_{\sO_{\eta}(s)}  \bD(\bu(s))\cdot\bD(\bq(s)) d\bx ds+\int_0^t\int_{\partial\sO_{\eta}(s)} (\sigma \bn(s))\cdot \bq(s)dS ds\\
&+\int_0^t\int_{\sO_{\eta}(s)}F^{ext}_u(s)\bq(s)d\bx ds.
\end{align*}
Let us introduce the following notation:
$$b(t,\bu,{\bf v},{\bf w}):=\frac12\int_{\sO_{\eta}(t)}\left( (\bu\cdot\nabla){\bf v}\cdot{\bf w}-(\bu\cdot\nabla){\bf w}\cdot{\bf v}\right)  d\bx.$$
{We calculate:}
\begin{align*}
-((\bu\cdot\nabla)\bu,\bq)_{\sO_{\eta}}&=-\frac12((\bu\cdot\nabla)\bu,\bq)_{\sO_{\eta}}+\frac12((\bu\cdot\nabla)\bq,\bu)_{\sO_{\eta}} -\frac12\int_{\partial\sO_{\eta}}|\bu|^2\bq\cdot \bn dS\\
&=-b(s,\bu,\bu,\bq) -\frac12\int_{\Gamma_{\eta}}|\bu|^2\bq\cdot \bn dS+ \frac12\int_{\Gamma_{in}}|\bu|^2q_zdS-\frac12\int_{\Gamma_{out}}|\bu|^2q_zdS.
\end{align*}
Since $\bn(t)=\frac{1}{\sqrt{1+\partial_z\eta(z,t)^2}}(-\partial_z\eta(t),1)$, we  obtain
$$\int_{\Gamma_{\eta}}|\bu|^2\bq\cdot \bn dS=\int_0^L(\partial_t\eta)^2\psi dz.$$
Now using the divergence free property of fluid velocity $\bu$ and the fact that $u_r=0$ on $\Gamma_{in/out}$ we have that $\partial_ru_r=-\partial_zu_z=0$ on $\Gamma_{in/out}$. Hence we obtain
\begin{align*}
\int_{\Gamma_{in/out}}\sigma \bn \cdot \bq dS=\int_{\Gamma_{in/out}}\pm p\, q_z dS = \int_{\Gamma_{in}} \left( P_{in}-\frac12|\bu|^2\right) q_z dr
-\int_{\Gamma_{out}} \left( P_{out}-\frac12|\bu|^2\right)q_z dr,
\end{align*}
whereas
$\int_{\Gamma_{b}}\sigma \bn \cdot \bq dS=0.$

Next we consider the structure equation \eqref{eta}. We multiply \eqref{eta} by $\psi$ and integrate in time and space {to obtain}
\begin{align*}
(\partial_t\eta(t),\psi(t))&=(v_0,\psi(0)) +\int_0^t\int_0^L\partial_s\eta\partial_s\psi dzds - \int_0^t\int_0^L (\partial_z\eta \partial_z\psi+  \partial_{zz}\eta\partial_{zz} \psi)dzds  \\
&-\int_0^t\int_0^LJ\sigma \bn\cdot \be_r \psi dzds +\int_0^t \int_0^LF_\eta^{ext}\psi dz ds.
\end{align*}
Thus, we have obtained the following weak formulation of the deterministic problem: 
for any test function $\bQ=(\bq,\psi) \in \sD^\eta(0,T)$ we look for  $(\bu,\eta) \in \tilde\sW(0,T)$,  such that the following equation is satisfied for {almost} every $t \in [0,T]$:
\begin{equation}
\begin{split}\label{origweakform}
&{{}\int_{\sO_{\eta(t )}}\bu(t )\bq(t ) d\bx+\int_0^L\partial_t\eta(t)\psi(t )dz}-\int_0^{t }\int_{\sO_{\eta(s)}}\bu\cdot\partial_s\bq d\bx ds\\ &+\int_0^{t} b(s,\bu,\bu,\bq)ds + 2\nu\int_0^{t} \int_{\sO_{\eta(s)}} \bD(\bu)\cdot \bD(\bq) d\bx ds\\
&-\frac12\int_0^{t}\int_{0}^L(\partial_s\eta)^2\psi dzds-\int_0^{t}\int_0^L\partial_s\eta\partial_s\psi dzds +\int_0^{t }\int_0^L (\partial_z\eta \partial_z\psi+  \partial_{zz}\eta\partial_{zz} \psi)dzds\\
&=\int_{\sO_{\eta_0}}\bu_0\bq(0) d\bx+ \int_{0}^L v_0\psi(0) dz
+\int_0^{t}P_{{in}}\int_{0}^1q_z\Big|_{z=0}drds-\int_0^tP_{{out}}\int_{0}^1q_z\Big|_{z=1}drds \\
& +\int_0^{t}\int_{\sO_{\eta}(s)}{\bq} \cdot {F_{u}^{ext}}\, d\bx ds 
+\int_0^{t}\int_0^L{ \psi} {F_{\eta}^{ext}}\, dz ds,
\end{split}\end{equation}
where ${F_{u}^{ext}}$ is the volumetric external force and $F_{\eta}^{ext}$ is the external force applied to the deformable boundary.

\subsection{The stochastic framework and definition of martingale solutions}
{{We will incorporate the stochastic effects by considering $F_u^{ext}$ and ${F_{\eta}^{ext}}$ to be multiplicative stochastic forces 
		determined by the
		{\bf noise coefficient} $G(\bu,v,\eta)$ defined on a product space made precise in Assumption~\ref{G} below, satisfying certain assumptions
		specified in \eqref{growthG}. We can then write the combined stochastic forcing $F^{ext}$ in terms of this $G$ as follows:
		\begin{equation}\label{StochasticForcing}
		{F^{ext}} := G(\bu,v,\eta) {dW},
		\end{equation}
		where $\bu$ is the fluid velocity, $v$ is the structure velocity, $\eta$ is the structure displacement, and $W$ is a Wiener process. 
		More precisely, the stochastic noise term is
}}
defined on a filtered probability space $(\Omega,\sF,(\sF_t)_{t \geq 0},\bP)$ that satisfies the usual
assumptions, i.e., $\sF_0$ is complete and the filtration is right continuous, that is, $\sF_{t}=\cap_{s \geq t}\sF_s$ for all $t \geq 0$. 
We assume that $W$ is a $U$-valued Wiener process with respect to the filtration $(\sF_t)_{t \geq 0}$, where $U$ is a separable Hilbert space. 
{{To specify the properties of the noise coefficient $G$, which will be done below in Assumption~\ref{G}, we introduce $Q$ 
		to be the covariance operator of $W$, which is a positive, trace class operator on $U$, 
		and define $U_0:=Q^{\frac12}(U)$. 
		
		We will use this framework to define a martingale solution to our stochastic FSI problem.
		In order to do this, we first transform the problem defined on moving domains onto a fixed reference domain using
		a family of Arbitrary Lagrangian-Eulerian (ALE) mappings. The mappings are defined next.
}}
\subsubsection{ALE mappings}\label{sec:ale}
As mentioned in the introduction, the geometric nonlinearity arising due to the motion of the fluid domain {will be} handled by the arbitrary Lagrangian-Eulerian (ALE) mappings which are a family of diffeomorphisms, parametrized by time $t$, from the fixed domain $\sO=(0,L) \times (0,1)$ onto {the moving domain}  $
\sO_{\eta}(t)$. Notice that the presence of the stochastic forcing implies that the domains $\sO_\eta$ are themselves random.
Hence, to take into account our stochastic setting we consider 
for any {sample} $\omega\in \Omega$, the ALE mappings defined {\it pathwise}:
\begin{align}\label{ale}
A_{\eta}^\omega(t):\sO \rightarrow \sO_{\eta}(t,\omega)\ {\rm given \ by} \ 
A_{\eta}^\omega (t)( z, r)= ( z,(R+\eta(t,z,\omega)) r).
\end{align}
Then, given any $\omega\in\Omega$, for as long as the Jacobian of the ALE mapping
\begin{equation}\label{ALE_Jacobian}
|\text{det }\nabla A_{\eta}^\omega(t)|=|R+\eta(t,z,\omega)|
\end{equation}
is bounded and bounded away from zero and continuous, 
the map
\begin{align}\label{Feta}
F_t:L^2(\sO_\eta(t))\rightarrow L^2(\sO)\ {\rm given \ by} \ 
F_t(f)(z,r)=f[(A_{\eta}^\omega(t)(z,r))]
\end{align}
is well-defined and 
$$\|F_t(f)\|_{L^2(\sO)} \leq C\|f\|_{L^2(\sO_\eta(t))},$$
where the constant $C>0$ depends on the lower and upper bounds of the Jacobian of the ALE mappings.
Hereon, we will suppress the notation $\omega$ and the dependence of the variables on $\omega$ will be understood implicitly.

{Under the transformation given in \eqref{ale}, the pathwise transformed gradient and {symmetrized gradient are} given by
	$$\nabla^\eta=\left( \partial_z-r\frac{\partial_z\eta}{R+\eta}\partial_r, \frac1{R+\eta}\partial_r\right) \text{ and } \bD^\eta(\bu)=\frac12(\nabla^\eta\bu+(\nabla^\eta)^T\bu). $$
	We use $\bw^\eta$ to denote the ALE velocity:
	$$\bw^\eta=\partial_t\eta r\be_r,$$}
and we rewrite the advection term as follows:
$$b^\eta(\bu,\bw,\bq)=\frac12\int_{\sO}(R+\eta)\left(((\bu-\bw)\cdot\nabla^\eta )\bu\cdot\bq-((\bu-\bw)\cdot\nabla^\eta )\bq\cdot\bu\right). $$
{We are now in a position to define the notion of martingale solutions to our stochastic FSI problem.}

\subsubsection{Definition of martingale solutions} 
{We start by introducing} the {\bf functional framework for the stochastic problem on the fixed reference domain} $\sO=(0,L) \times (0,1)$. {For this purpose we will denote by $$\Gamma=(0,L)\times \{1\}$$
	the reference configuration of the moving domain. The following are the function spaces for the stochastic FSI problem defined on the fixed domain $\sO$:}
\begin{align}
&\label{V} V= \{{\bf u}=(u_z,u_r)\in {\bH}^{1}(\sO): 
u_z=0 \text{ on } \Gamma, u_r=0 \text{ on }\partial \sO_{}\setminus \Gamma \},\\
&\sW_F=L^2(\Omega;L^\infty(0,T;\bL^2(\sO))) \cap L^2(\Omega;L^2(0,T;V)),\\
&\sW_S=L^2(\Omega;W^{1,\infty}(0,T;L^2(0,L)) \cap L^\infty(0,T;H^2_0(0,L))),\\
&\sW(0,T)=\{(\bu,\eta)\in \sW_F\times \sW_S:
\bu(t,z,1)=\partial_t\eta(t,z){\bf e}_r \text{ and } \nabla^\eta \cdot {\bu}=0\, \,\bP-a.s.\}.
\end{align} 
The test space is defined as follows:
{
	\begin{equation}\label{Dspace}
	\sD=\{({\bf q},\psi) \in  V \times H^2_0(0,L): {\bf q}(z,1)=\psi(z){\bf e}_r
	\}.
	\end{equation}
}
Very often we will work with the following space for the fluid and structure velocities:
\begin{align}\label{su}
\sU=\{(\bu,v)\in V\times L^2(0,L):\bu(z,1)=v(z)\be_r \}.
\end{align}
{We further introduce the following notation:
	$$\bL^2:=\bL^2(\sO)\times L^2(0,L).$$}

Before we define a martingale solution we specify the stochastic noise as follows.
{\begin{assm}\label{G}
		Let $L_2(X,Y)$ denote the space of Hilbert-Schmidt operators from a Hilbert space $X$ to another Hilbert space $Y$. The {\bf noise coefficient} $G$ is a function $G:\sU\times L^\infty(0,L) \rightarrow L_2(U_0;\bL^2)$ such that the following Lipschitz continuity assumptions hold:
		\begin{equation}\begin{split}\label{growthG}
		&\|G(\bu,v,\eta)\|_{L_2(U_0;\bL^2)} \leq \|\eta\|_{L^\infty(0,L)}\|{\bf u}\|_{\bL^2(\sO)} + \|v\|_{L^2(0,L)},\\
		&\|G(\bu_1,v_1,\eta)-G(\bu_2,v_2,\eta)\|_{L_2(U_0;\bL^2)} \leq \|\eta\|_{L^\infty(0,L)}\|{\bu_1}-{\bu_2 }\|_{\bL^2(\sO)} + \|v_1-v_2\|_{L^2(0,L)},\\
		&\|G(\bu,v,\eta_1)-G(\bu,v,\eta_2)\|_{L_2(U_0;\bL^2)} \leq \|\eta_1-\eta_2\|_{L^\infty(0,L)}\|\bu\|_{\bL^2(\sO)}.
		\end{split}\end{equation}	
	\end{assm}
	As example of such a noise coefficient $G$ is a linearly multiplicative noise transformed onto the fixed domain. More precisely, $G$ can take the form}
\begin{align}
(G(\bu,v,\eta),(\bq,\psi)):=\left( (R+\eta)\bu,\bq) + (v,\psi)\right) \cdot\Phi,
\end{align}
where $\Phi \in L_2(U_0;\mathbb{R})$ and $(\cdot,\cdot)$ is the inner product on $\bL^2$. {We remark that nonlinear noise} coefficients are allowed in our analysis as well.

We are now in position to define solutions to the system \eqref{u}-\eqref{ic} in stochastic setting. 
\begin{definition}[Martingale solution]\label{def:martingale}
	Let   $\bu_0 \in \bL^2(\sO)$ and $v_0\in L^2(0,L)$ be deterministic initial data and let $\eta_0\in H_0^2(0,L)$ be 
	a compatible initial structure configuration 
		such that
		for some $\delta>0$  the initial configuration satisfies 
	 \begin{align}	 \label{etainitial}
	{\delta<R+\eta_0(z),\quad \forall z\in [0,L],\quad \text{ and }\quad { \|R+\eta_0\|_{H^2_0(0,L)}<\frac1{\delta}}.}
	\end{align}
	We say that  $(\mathscr{S},\bu,\eta,\tau)$ is a  martingale solution to the system \eqref{u}-\eqref{ic} under the assumptions \eqref{growthG} if: 
	\begin{enumerate}
		\item  $\mathscr{S}=(\Omega,\sF,(\sF_t)_{t\geq 0},\bP,W)$ is a stochastic basis, that is, $(\Omega,\sF,(\sF_t)_{t\geq 0},\bP)$ is a filtered probability space  and $W$ is a $U$-valued Wiener process;
		\item $(\bu,\eta)\in \sW(0,T)$;
		\item $\tau$ is a $\bP$-a.s. strictly positive $(\sF_t)_{t\geq 0}-$stopping time;
		\item 
		$ \bU=(\bu,\partial_t\eta)$ and $\eta$ are  $(\sF_t)_{t \geq 0}-$progressively measurable;   
		\item	For every $(\sF_t)_{t \geq 0}-$adapted, essentially bounded process $\bQ:=(\bq,\psi)$ with $C^1$ paths in $\sD$ such that 
		$\nabla^\eta \cdot \bq=0$, the following equation holds $\bP-$a.s. for almost every $t \in[0,\tau)$:
	\begin{equation}\label{weaksol}
	\begin{split}
	&{\int_{\sO}(R+\eta(t))\bu(t)\bq(t) d\bx +\int_0^L\partial_t\eta(t)\psi(t)dz}= \int_{\sO}(R+\eta_0)\bu_0\bq(0) d\bx  + \int_{0}^L v_0\psi(0) dz \\
	&+\int_0^{t }\int_{\sO}(R+\eta(s))\bu(s)\cdot \partial_s\bq(s) d\bx ds +\frac12\int_0^{t } \int_\sO (\partial_s\eta(s))\bu(s)\cdot\bq(s) d\bx ds\\
	&-\int_0^{t }b^{\eta}(\bu(s),\bw^\eta(s),\bq(s))ds- 2\nu\int_0^{t } \int_{\sO} (R+\eta(s))\bD^\eta(\bu(s))\cdot \bD^\eta(\bq(s)) d\bx ds\\
	&+\int_0^{t }\int_0^L\partial_s\eta(s)\partial_s\psi(s) dzds -\int_0^{t }\int_0^L \partial_z\eta(s)\partial_z \psi(s) +\partial_{zz}\eta(s)\partial_{zz} \psi(s) dzds\\
	&+\int_0^{t }\left( P_{{in}}\int_{0}^1q_z\Big|_{z=0}dr-P_{{out}}\int_{0}^1q_z\Big|_{z=1}dr\right) ds+\int_0^{t }(\bQ(s),G(\bU(s),\eta(s))\,dW(s)).
	\end{split}\end{equation}
\end{enumerate}
\end{definition}
The main result of this manuscript is the proof of the existence of martingale solutions. 
The proof is constructive, and it relies on the following operator splitting scheme.
\section{Operator splitting scheme} \label{sec:splitscheme}
In this section we introduce a Lie operator splitting scheme that defines a sequence of approximate solutions {to \eqref{weaksol} by semi-discretizing the problem in time}. {The final goal is to show that up to a subsequence, approximate solutions converge in a certain sense to a martingale solution of the stochastic FSI problem. }

{\subsection{Definition of the splitting scheme} We semidiscretize the problem in time and use operator splitting to divide the coupled stochastic problem into two subproblems, a fluid and a structure subproblem.}
We denote the time step by $\Delta t=\frac{T}{N}$ and use the notation $t^n=n\Delta t$ for $n=0,1,...,N$. 

Let $(\bu^0,\eta^0,v^0)=(\bu_0,\eta_0,v_0)$ {be the initial data. At the $i^{th}$ time level}, we update the vector $(\bu^{n+\frac{i}{2}},\eta^{n+\frac{i}{2}},v^{n+\frac{i}{2}})$, where $i=1,2$ and $n=0,1,2,...,N-1$, {according to the following scheme}.
\subsection*{The structure subproblem} 
We update the structure displacement and the structure velocity while keeping the fluid velocity the same. That is, given $(\eta^n,v^n) \in H^2_0(0,L)\times L^2(0,L)$ we look for a pair $(\eta^{n+\frac12},v^{n+\frac12}) \in H^2_0(0,L) \times H^2_0(0,L)$ that satisfies the following equations pathwise i.e. for each $\omega\in\Omega$:
\begin{equation}
\begin{split}\label{first}
\bu^{n+\frac12}&=\bu^n,\\
\int_0^L(\eta^{n+\frac12}-\eta^n) \phi dz&= (\Delta t)\int_0^L v^{n+\frac12}\phi dz,\\
\int_0^L \left( v^{n+\frac12}-v^n\right)  \psi dz &+ (\Delta t)\int_0^L \partial_{z}\eta^{n+\frac12}  \partial_{z}\psi + \partial_{zz}\eta^{n+\frac12}  \partial_{zz}\psi dz=0,
\end{split}
\end{equation}
for any $\phi \in L^2(0,L)$ and $\psi \in H^2_0(0,L)$.

Before commenting on the existence of the random variables $\eta^{n+\frac12},v^{n+\frac12}$ and their measurability properties, we introduce the second subproblem.
\subsection*{The fluid subproblem}  {In this step we update the fluid and the structure velocities while keeping the structure displacement unchanged. 
	See, e.g., \cite{MC13, MC16, MC19} for the deterministic case. In the stochastic case, however, there are two major difficulties that need to be overcome	before we can even define the fluid subproblem, which are both 
	due to	the fact the the fluid domain is a random variable not known a priori. }
\begin{enumerate}
	\item First, as the problem is mapped onto the fixed, reference domain $\sO$, 
		see \eqref{weaksol}, the {\bf{Jacobian of the ALE transformation}} \eqref{ALE_Jacobian} appears is several terms. 
		Since at every time step $n$ the Jacobian depends on the random variable $\eta^n$, we cannot introduce the maximal and minimal displacements, as is done in the deterministic case, see \cite{MC13, MC16, MC19}, to obtain a uniform upper and lower bound of the Jacobian for every realization. 
		Hence we introduce an "artificial" structure displacement variable $\eta_*^n$, using a cutoff function, that has suitable deterministic bounds. This artificial structure displacement is defined and { introduced in the problem} in a way that ensures the stability of our time-marching scheme. At every time step we then solve the fluid equations with respect to this artificial domain, determined by $\eta_*^n$, and then show in Lemma~\ref{StoppingTime}  that there exists a stopping time that is a.s. strictly greater than zero, such that the cutoff in the definition of the artificial displacement is, in fact,  never used until possibly that stopping time, thereby providing a solution to the original problem until the stopping time.
	\item  Second, for the problem mapped onto the fixed domain $\sO$, the {\bf{divergence-free condition}} is a problem when working with the mapped test functions, 
	as we explain below. To avoid working with the divergence-free test functions in the approximate problems, we relax the divergence-free condition by
	supplementing the weak formulation by a penalty term of the form $\frac1{\ep}\int \text{div}^\eta \bu \ \text{div}^\eta\bq$,
	and work with artificial compressibility until the very end, when we recover the solution to the original incompressible (divergence-free) problem by letting  $\ep \rightarrow 0$. See Section~\ref{sec:limit2}. 
	To explain why working with the  divergence-free condition is a problem in this stochastic setting, we note that 
for the fluid velocity defined on the fixed domain $\sO$ to satisfy the transformed divergence-free condition at the time step $n+1$, the test functions in the fluid equations in this subproblem would normally depend on the structure displacement $\eta^n$ found in the previous subproblem. 
This means that we would use the test functions $\bq^{n+1} \in \sU$ that satisfy div$^{\eta^n_*}\bq^{n+1}=0$ (see the weak formulation \eqref{weaksol}). This dependence of the fluid test functions on $\eta^n_*$, which in our case is a random variable, causes problems in the second part of the existence proof as we construct a new probability space as a part of a martingale solution.
It is not clear that all the admissible test functions with respect to the new probability space would necessarily come from the divergence-free test functions built in this subproblem. 
This is why we introduce artificial compressibility via the penalty term mentioned above, and avoid dealing with the divergence-free condition until the
very end, when we recover the divergence-free solution in Section~\ref{sec:limit2}.
\end{enumerate} 
\vskip 0.1in
\noindent
{\bf{Construction of the cut-off displacement.}}
For $\delta>0$, let $\Theta_\delta$ be the
step function such that
$\Theta_{\delta}(x,y)=1$ if $\delta < x, \text{} y<  \frac1{\delta}$, 
and $\Theta_{\delta}(x,y)=0$ otherwise.  
Using this function,we define {a real-valued function $\theta_\delta(\eta^n)$ which tracks all the structure displacements until the time step $n$, and is equal to 1 until the step for which the structure leaves the desired bounds given in terms of $\delta$:}
\begin{align}\label{theta}
\theta_\delta(\eta^n):=\min_{k\leq n}\,
\Theta_{\delta}\left( \inf_{z\in[0,L]}(\eta^k(z)+R), \|\eta^k+R\|_{H^s(0,L)}\right), \end{align}
where $s \in (\frac32,2)$.
Now we define the artificial structure displacement random variable as follows:
\begin{align}\label{eta*}\eta^{n}_*(z,\omega)=\eta^{\max_{0\leq k\leq n}\theta_\delta(\eta^k)k}(z,\omega)\quad \text{for every } \omega \in \Omega.\end{align}
{Note that the superscript in the definition above indicates the time step and not the power of structure displacement.}
\vskip 0.1in
\noindent
{{\bf A divergence-free relaxation via penalty.} We introduce a divergence-free penalty term and define the fluid subproblem as follows}. Let $\Delta_n W:=W(t^{n+1})-W(t^n)$.\\
Then for $\ep>0$ and given $\bU^n=(\bu^n,v^n) \in \sU$ we look for $ (\bu^{n+1},v^{n+1})$ that solves
\begin{equation}
\begin{split}\label{second}
&\qquad\qquad\eta^{n+1}:=\eta^{n+\frac12}, \\
&\int_{\sO}(R+\eta^n_*)\left( \bu^{n+1}-\bu^{n+{\frac12}}\right) \bq d\bx  +\frac{1}2\int_\sO \left( \eta_*^{n+1}-\eta_*^n\right)  \bu^{n+1}\cdot\bq d\bx \\
&{\cred +\frac12(\Delta t)\int_{\sO}(R+\eta_*^n)((\bu^{n+1}-
v^{n+1}r\be_r)\cdot\nabla^{\eta_*^n}\bu^{n+1}\cdot\bq - (\bu^{n+1}-
v^{n+1}r\be_r)\cdot\nabla^{\eta_*^n}\bq\cdot\bu^{n+1})d\bx} \\
&+2\nu(\Delta t)\int_{\sO_{}}(R+\eta^n_*) \bD^{\eta_*^{n}}(\bu^{n+1})\cdot \bD^{\eta_*^{n}}(\bq) d\bx  + \frac{(\Delta t)}{\ep}\int_\sO 
\text{div}^{\eta^n_*}\bu^{n+1}\text{div}^{\eta^n_*}\bq d\bx 
+\int_0^L(v^{n+1}-v^{n+\frac12} )\psi dz \\
&= (\Delta t)\left( P^n_{{in}}\int_{0}^1q_z\Big|_{z=0}dr-P^n_{{out}}\int_{0}^1q_z\Big|_{z=1}dr\right) +  
(G(\bU^{n},\eta_*^n)\Delta_n W, \bQ),
\end{split}
\end{equation}
for any $\bQ=(\bq,\psi) \in \sU$ with 
$$ \bu^{n+1}|_{\Gamma}=v^{n+1}\be_r.$$
Here,
$$P^n_{in/out}:=\frac1{\Delta t}\int_{t^n}^{t^{n+1}}P_{in/out}\,dt,\quad \text{div}^{\eta}\bu=tr(\nabla^\eta\bu).$$
\begin{remark}
	Observe that using the data from the second subproblem, we update $\eta^n$ in the first subproblem and not $\eta_*^n$. This is crucial for obtaining a discrete version of the energy inequality and a stable time-marching scheme. 
\end{remark}

{We are now ready to prove the existence of solutions to the two subproblems. For this purpose} we introduce the following discrete energy and dissipation for $i=0,1$:
\begin{equation}\label{EnDn}
\begin{split}
E^{n+\frac{i}2}&=\frac12\Big(\int_{\sO}(R+\eta^{n}_*)|\bu^{n+\frac{i}2}|^2 d\bx
+\|v^{n+\frac{i}2}\|^2_{L^2(0,L)}+\|\partial_z\eta^{n+\frac{i}2}\|^2_{L^2(0,L)}+\|\partial_{zz}\eta^{n+\frac{i}2}\|^2_{L^2(0,L)}\Big),\\
D^{n}&=\Delta t \int_{\sO}\left( 2\nu(R+\eta_*^n)  |\bD^{\eta_*^{n}}(\bu^{n+1})|^2+\frac1{\ep}|\text{div}^{\eta^n_*}\bu^{n+1}|^2  \right) d\bx .
\end{split}\end{equation}
\begin{lem}[Existence for the structure subproblem]
	Assume that $\eta^n$ and $v^{n}$ are $H^2_0(0,L)$ and $L^2(0,L)$-valued $\sF_{t^n}$-measurable random variables, {respectively}. Then there exist $H^2_0(0,L)$- valued $\sF_{t^n}$-measurable random variables $\eta^{n+\frac12},v^{n+\frac12}$ that solve \eqref{first}, {and the following semidiscrete energy inequality holds:
		\begin{equation}\label{energy1}
		\begin{split}
		E^{n+\frac12} +C_1^n = E^{n},
		\end{split}
		\end{equation}
		where
		$$C^{n}_1:= { \frac12\|v^{n+\frac12}-v^n\|_{L^2(0,L)}^2 } +\frac12\|\partial_z\eta^{n+\frac12}-\partial_z\eta^{n}\|_{L^2(0,L)}^2 +\frac12 \|\partial_{zz}\eta^{n+\frac12}-\partial_{zz}\eta^{n}\|_{L^2(0,L)}^2,$$ corresponds to numerical dissipation.}
\end{lem}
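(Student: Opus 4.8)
The plan is to decouple the two equations in \eqref{first} into a single elliptic problem for $\eta^{n+\frac12}$, solve it by Lax--Milgram, recover $v^{n+\frac12}$ algebraically, verify measurability, and finally test with the solution itself to obtain \eqref{energy1}.

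\textbf{Step 1: Reduction to an elliptic problem for $\eta^{n+\frac12}$.} The first relation $\int_0^L(\eta^{n+\frac12}-\eta^n)\phi\,dz=(\Delta t)\int_0^L v^{n+\frac12}\phi\,dz$ holds for all $\phi\in L^2(0,L)$, so pointwise $v^{n+\frac12}=\frac{1}{\Delta t}(\eta^{n+\frac12}-\eta^n)$ in $L^2(0,L)$. Substituting into the third relation (using $v^{n+\frac12}-v^n$ tested against $\psi\in H^2_0(0,L)$) gives the variational problem: find $\eta^{n+\frac12}\in H^2_0(0,L)$ such that
\begin{equation*}
\frac{1}{\Delta t}\int_0^L \eta^{n+\frac12}\psi\,dz+(\Delta t)\int_0^L\bigl(\partial_z\eta^{n+\frac12}\partial_z\psi+\partial_{zz}\eta^{n+\frac12}\partial_{zz}\psi\bigr)\,dz=\frac{1}{\Delta t}\int_0^L\eta^n\psi\,dz+\int_0^L v^n\psi\,dz
\end{equation*}
for all $\psi\in H^2_0(0,L)$. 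The bilinear form on the left is clearly bounded and, by the Poincar\'e inequality on $H^2_0(0,L)$, coercive (indeed the $H^2_0$ seminorm controls the full norm, and the zeroth-order term only adds positivity). The right-hand side is a bounded linear functional on $H^2_0(0,L)$ since $\eta^n,v^n\in L^2(0,L)$. By Lax--Milgram there is a unique solution $\eta^{n+\frac12}\in H^2_0(0,L)$, and then $v^{n+\frac12}:=\frac{1}{\Delta t}(\eta^{n+\frac12}-\eta^n)\in H^2_0(0,L)$ as well (difference of two $H^2_0$ functions). One checks directly that this pair solves all three equations in \eqref{first}.

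\textbf{Step 2: Measurability.} Since the problem is solved pathwise for each $\omega$, and the solution map of the Lax--Milgram problem depends continuously (indeed linearly and boundedly) on the data $(\eta^n(\omega),v^n(\omega))\in L^2(0,L)\times L^2(0,L)$, the map $\omega\mapsto\eta^{n+\frac12}(\omega)$ is the composition of the $\sF_{t^n}$-measurable map $\omega\mapsto(\eta^n(\omega),v^n(\omega))$ with a continuous (Borel) map into $H^2_0(0,L)$; hence $\eta^{n+\frac12}$ and $v^{n+\frac12}$ are $H^2_0(0,L)$-valued $\sF_{t^n}$-measurable random variables.

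\textbf{Step 3: Energy identity.} Take $\phi=v^{n+\frac12}$ in the second equation and $\psi=v^{n+\frac12}$ in the third equation of \eqref{first}. From the second equation, $\int_0^L(\eta^{n+\frac12}-\eta^n)v^{n+\frac12}\,dz=(\Delta t)\|v^{n+\frac12}\|_{L^2}^2$, while using $v^{n+\frac12}=\frac1{\Delta t}(\eta^{n+\frac12}-\eta^n)$ the structure-stiffness term in the third equation becomes $\int_0^L(\partial_z\eta^{n+\frac12}\partial_z(\eta^{n+\frac12}-\eta^n)+\partial_{zz}\eta^{n+\frac12}\partial_{zz}(\eta^{n+\frac12}-\eta^n))\,dz$. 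Applying the elementary polarization identity $a(a-b)=\tfrac12 a^2-\tfrac12 b^2+\tfrac12(a-b)^2$ to each of the three quadratic pairings (the $v$-term from $(v^{n+\frac12}-v^n)v^{n+\frac12}$, and the two stiffness terms) yields exactly
$$\tfrac12\|v^{n+\frac12}\|_{L^2}^2+\tfrac12\|\partial_z\eta^{n+\frac12}\|_{L^2}^2+\tfrac12\|\partial_{zz}\eta^{n+\frac12}\|_{L^2}^2+C_1^n=\tfrac12\|v^n\|_{L^2}^2+\tfrac12\|\partial_z\eta^n\|_{L^2}^2+\tfrac12\|\partial_{zz}\eta^n\|_{L^2}^2,$$
with $C_1^n$ the sum of the three numerical-dissipation squares. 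Adding $\tfrac12\int_\sO(R+\eta_*^n)|\bu^{n+\frac12}|^2\,d\bx=\tfrac12\int_\sO(R+\eta_*^n)|\bu^n|^2\,d\bx$ (which holds since $\bu^{n+\frac12}=\bu^n$ and $\eta_*^n$ is unchanged in this substep) gives \eqref{energy1}.

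\textbf{Main obstacle.} There is no serious analytic obstacle here: coercivity on $H^2_0(0,L)$ is immediate and the energy identity is a bookkeeping exercise with the polarization identity. The only point requiring a little care is the measurability claim in Step 2 --- one must phrase the pathwise Lax--Milgram solution as a measurable selection, which is routine given the continuous dependence on data but should be stated explicitly rather than taken for granted.
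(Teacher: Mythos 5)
Your proof is correct and follows essentially the same route as the paper: the paper dispatches existence, uniqueness, and measurability by linearity (citing \cite{KC23}), which your Lax--Milgram reduction for $\eta^{n+\frac12}$ and the continuity-of-the-solution-map argument simply make explicit, and your energy identity is obtained exactly as in the paper, by using $v^{n+\frac12}=\frac{\eta^{n+\frac12}-\eta^n}{\Delta t}$, testing \eqref{first}$_3$ with $\psi=v^{n+\frac12}$, applying $a(a-b)=\frac12(|a|^2-|b|^2+|a-b|^2)$, and adding the unchanged fluid kinetic energy term.
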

\begin{proof}
	The proof of existence and uniqueness of measurable solutions is straightforward thanks to the linearity of equations \eqref{first} (see \cite{KC23}). 
	Furthermore we can write
	$$v^{n+\frac12}=\frac{\eta^{n+\frac12}-\eta^n}{\Delta t}.$$
	Using this pathwise equality while taking $\psi=v^{n+\frac12}$ in \eqref{first}$_3$ and using $a(a-b)=\frac12(|a|^2-|b|^2+|a-b|^2)$,  we obtain
{	\begin{equation}\label{energy1_1}
	\begin{split}	
	&\|v^{n+\frac12}\|_{L^2(0,L)}^2 +\|v^{n+\frac12}-v^n\|_{L^2(0,L)}^2 + \|\partial_z\eta^{n+\frac12}\|_{L^2(0,L)}^2 +\|\partial_z\eta^{n+\frac12}-\partial_z\eta^{n}\|_{L^2(0,L)}^2\\
	& +\|\partial_{zz}\eta^{n+\frac12}\|_{L^2(0,L)}^2+\|\partial_{zz}\eta^{n+\frac12}-\partial_{zz}\eta^n\|_{L^2(0,L)}^2\\
	&=\|v^n\|_{L^2(0,L)}^2+\|\partial_{z}\eta^{n}\|_{L^2(0,L)}^2+ \|\partial_{zz}\eta^{n}\|_{L^2(0,L)}^2.
	\end{split}	\end{equation}}
	Recalling  that $\bu^n=\bu^{n+\frac12}$ and adding the relevant terms on both sides of \eqref{energy1_1} we obtain \eqref{energy1}.
\end{proof}
\begin{lem}[Existence for the fluid subproblem]\label{existu}
	For given $\delta>0$, and given $\sF_{t^n}$-measurable random variables $(\bu^{n+\frac12},v^n)$ taking values in $\sU$ and $v^{n+\frac12}$ taking values in $H^2_0(0,L)$, there exists an $\sF_{t^{n+1}}$-measurable random variable $(\bu^{n+1},v^{n+1})$ taking values in $\sU$ that solves \eqref{second}, {and the solution satisfies the following energy estimate
		\begin{equation}\label{energy2}
		\begin{split}
		E^{n+1}+D^{n}+C_2^{n}&\leq E^{n+\frac12} +C\Delta t((P^n_{in})^2+(P^n_{out})^2) + {C}\|\Delta_nW\|_{U_0}^2
		\|G(\bU^{n},\eta_*^n)\|_{L_2(U_0;\bL^2)}^2\\
		&+\|
		(G( \bU^{n},\eta_*^n)\Delta_n W, \bU^{n}) \|+\frac14\int_0^L(v^{n+\frac12}-v^n)^2dz
		\end{split}\end{equation}
		where
		$$C_2^{n}:=\frac14\int_\sO (R+\eta_*^n)\left( |\bu^{n+1}-\bu^{n}|^2 \right) d\bx +\frac14\int_0^L|v^{n+1}-v^{n+\frac12}|^2 dz$$
		is numerical dissipation, and $\eta^n_*$ is as defined in \eqref{eta*}.}
\end{lem}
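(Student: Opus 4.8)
The plan is to treat \eqref{second} pathwise, for each fixed $\omega\in\Omega$, as a nonlinear stationary variational problem for $\bU^{n+1}=(\bu^{n+1},v^{n+1})$ in the Hilbert space $\sU$ of \eqref{su} (the kinematic constraint $\bu^{n+1}|_\Gamma=v^{n+1}\be_r$ is built into $\sU$), and then to upgrade the pathwise solution to an $\sF_{t^{n+1}}$-measurable one. Write the left side of \eqref{second} as $\langle\mathcal A(\bU^{n+1}),\bQ\rangle$, where $\mathcal A$ depends pathwise on $\eta^n_*$ and $\eta_*^{n+1}$ through its coefficients and on $\bu^{n+\frac12}=\bu^n$, $v^{n+\frac12}$, $v^n$, $G(\bU^n,\eta^n_*)\Delta_nW$ and $P^n_{in/out}$ through affine data. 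Two structural facts are decisive. First, the cutoff \eqref{theta}--\eqref{eta*} enforces the \emph{deterministic} bounds $\delta\le R+\eta^n_*,\,R+\eta_*^{n+1}$ on $[0,L]$ and $\|R+\eta^n_*\|_{H^s},\,\|R+\eta_*^{n+1}\|_{H^s}\le 1/\delta$ with $s\in(\tfrac32,2)$; since $H^s(0,L)\hookrightarrow C^1$, the operator $\nabla^{\eta^n_*}$ is an invertible transformation of $\nabla$ with coefficients and inverse coefficients bounded in terms of $\delta$ only, so a Korn-type inequality uniform in these bounds gives $\|\bq\|_{\bH^1(\sO)}\le C(\delta)\|\bD^{\eta^n_*}(\bq)\|_{\bL^2(\sO)}$ on $V$, the viscous and advection forms are bounded on $\sU$, and the penalty term is nonnegative. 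Second, the algebraic identity $(R+\eta^n_*)+\tfrac12(\eta_*^{n+1}-\eta^n_*)=\tfrac12(R+\eta^n_*)+\tfrac12(R+\eta_*^{n+1})\ge\delta$ shows that the fluid mass term together with the geometric term in \eqref{second} is coercive on $\bL^2(\sO)$, while the skew-symmetrised advection vanishes on the diagonal. Hence $\mathcal A$ is coercive: $\langle\mathcal A(\bU),\bU\rangle\ge c(\delta,\nu)\big(\|\bu\|_{\bH^1(\sO)}^2+\|v\|_{L^2(0,L)}^2\big)-C(\text{data})(1+\|\bU\|_\sU)$.

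With this coercivity, pathwise existence follows by a Galerkin method: fixing a Hilbert basis of $\sU$, the finite-dimensional problems are solvable by a Brouwer fixed-point/degree argument (as for the stationary Navier--Stokes equations), with a uniform energy bound obtained by testing with the Galerkin solution itself; the quadratic advection term then passes to the limit using the compact embeddings $V\hookrightarrow\hookrightarrow\bL^2(\sO)$ and the compact trace $\bq\mapsto\bq|_\Gamma$, $V\to L^2(0,L)$. This produces $\bU^{n+1}(\omega)\in\sU$ solving \eqref{second}. For $\sF_{t^{n+1}}$-measurability, observe that all coefficients and affine data of $\mathcal A$ are $\sF_{t^n}$-measurable --- in particular $\eta_*^{n+1}$ is built from $\eta^{n+1}:=\eta^{n+\frac12}$ and from $\theta_\delta$ applied to $\eta^0,\dots,\eta^{n+\frac12}$, all $\sF_{t^n}$-measurable, and $G(\bU^n,\eta^n_*)$ is $\sF_{t^n}$-measurable by Assumption~\ref{G} --- whereas $\Delta_nW$ is $\sF_{t^{n+1}}$-measurable. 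Since the solution set of \eqref{second} is a nonempty closed (and bounded-by-the-data) subset of $\sU$ with Borel-measurable graph over the parameter, a measurable selection theorem yields an $\sF_{t^{n+1}}$-measurable solution $\bU^{n+1}$; alternatively, the Galerkin approximations can be realised as measurable functions of the data and the limit taken.

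For the energy estimate \eqref{energy2}, test \eqref{second} with $\bQ=\bU^{n+1}=(\bu^{n+1},v^{n+1})$. Applying $a(a-b)=\tfrac12(a^2-b^2+(a-b)^2)$ weighted by $R+\eta^n_*$ to the mass term and using the identity above converts the first two integrals of \eqref{second} into $\tfrac12\int_\sO(R+\eta_*^{n+1})|\bu^{n+1}|^2-\tfrac12\int_\sO(R+\eta^n_*)|\bu^{n+\frac12}|^2+\tfrac12\int_\sO(R+\eta^n_*)|\bu^{n+1}-\bu^{n+\frac12}|^2$; the skew-symmetrised advection drops; the viscous and penalty integrals give $D^n$; and $\int_0^L(v^{n+1}-v^{n+\frac12})v^{n+1}$ gives $\tfrac12\|v^{n+1}\|_{L^2}^2-\tfrac12\|v^{n+\frac12}\|_{L^2}^2+\tfrac12\|v^{n+1}-v^{n+\frac12}\|_{L^2}^2$, while the elastic parts of $E^{n+1}$ and $E^{n+\frac12}$ coincide since $\eta^{n+1}=\eta^{n+\frac12}$. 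Moving $C_2^n$ to the left leaves a quarter of each square difference free for absorption. On the right, the inlet/outlet terms are bounded by the trace theorem and Young's inequality, with the resulting $\bH^1(\sO)$-part absorbed (via the Korn inequality above) into the viscous dissipation --- a small fraction of $E^{n+1}$ may also need to be absorbed, which is admissible for $\Delta t$ small --- leaving $C\Delta t((P^n_{in})^2+(P^n_{out})^2)$. Finally, split $(G(\bU^n,\eta^n_*)\Delta_nW,\bU^{n+1})=(G(\bU^n,\eta^n_*)\Delta_nW,\bU^n)+(G(\bU^n,\eta^n_*)\Delta_nW,\bU^{n+1}-\bU^n)$: the first summand is retained as the term $\|(G(\bU^n,\eta^n_*)\Delta_nW,\bU^n)\|$ in \eqref{energy2}, and for the second, writing $\bU^{n+1}-\bU^n=(\bu^{n+1}-\bu^{n+\frac12},v^{n+1}-v^n)$ with $v^{n+1}-v^n=(v^{n+1}-v^{n+\frac12})+(v^{n+\frac12}-v^n)$, Young's inequality together with $\|G(\bU^n,\eta^n_*)h\|_{\bL^2}\le\|h\|_{U_0}\|G(\bU^n,\eta^n_*)\|_{L_2(U_0;\bL^2)}$ absorbs the $\bu^{n+1}-\bu^{n+\frac12}$ and $v^{n+1}-v^{n+\frac12}$ contributions into the leftover quarters, produces $\tfrac14\int_0^L(v^{n+\frac12}-v^n)^2\,dz$ on the right, and leaves $C\|\Delta_nW\|_{U_0}^2\|G(\bU^n,\eta^n_*)\|_{L_2(U_0;\bL^2)}^2$. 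Collecting the surviving terms yields \eqref{energy2}.

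The main obstacle I anticipate is the interaction of the quadratic advection nonlinearity with the measurability requirement: because \eqref{second} is genuinely nonlinear and carries the random coefficients $\eta^n_*,\eta_*^{n+1}$, its solution need not be unique uniformly in $\omega$, so one cannot simply invert a bilinear form, and the $\sF_{t^{n+1}}$-measurability of the selected solution must be secured through a measurable selection (or a measurable Galerkin construction) rather than from a continuous solution operator. A related point requiring care is that every constant arising from Korn's inequality, the trace estimate and the Young inequalities depends only on $\delta$ (through the deterministic bounds on $\eta^n_*$, $\eta_*^{n+1}$) and not on the realisation, so that \eqref{energy2} is a genuinely pathwise bound with deterministic constants.
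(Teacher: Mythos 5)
Your proposal is correct and follows essentially the same route as the paper: pathwise coercivity of the variational form on $\sU$ via the deterministic $\delta$-bounds on $\eta^n_*,\eta^{n+1}_*$ and Korn's inequality, a Galerkin/Brouwer argument for existence, a measurable-selection (Kuratowski--Ryll-Nardzewski type) argument for $\sF_{t^{n+1}}$-measurability, and the same testing with $\bU^{n+1}$ plus the splitting $(G\Delta_nW,\bU^{n+1})=(G\Delta_nW,\bU^n)+(G\Delta_nW,\bU^{n+1}-\bU^n)$ for \eqref{energy2}. The only superfluous point is your hedge about absorbing a fraction of $E^{n+1}$ for small $\Delta t$: since the inlet/outlet terms carry the factor $\Delta t$, they can be absorbed directly into the $\Delta t$-weighted viscous dissipation $D^n$ (using Korn and the Poincar\'e-type inequality on $V$), so no smallness of $\Delta t$ is needed.
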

\begin{proof}
The proof is based on Galerkin approximation and a fixed point argument.
	For $\omega\in \Omega$,  {introduce} the following form on $\sU$, {which is defined by the fluid subproblem: }
	\begin{align*}
	&\langle \sL^\omega_n(\bu,v),(\bq,\psi)\rangle:=\int_{\sO}(R+\eta_*^n)\left( { \bu}-\bu^{n+{\frac12}}\right) \bq d\bx +\frac{1}2\int_\sO \left( \eta_*^{n+1}-\eta_*^n\right)  \bu\cdot\bq d\bx \\
	&+\frac{(\Delta t)}2\int_{\sO}(R+\eta_*^n)(({\bu}-{v}r\be_r)\cdot\nabla^{\eta_*^n}\bu\cdot\bq - ({ \bu}-{v}r\be_r)\cdot\nabla^{\eta_*^n}\bq\cdot\bu^{})d\bx \\
	&+2\nu(\Delta t)\int_{\sO_{}} (R+{\eta^{n}_*})\bD^{\eta^{n}_*}(\bu^{})\cdot \bD^{\eta^{n}_*}(\bq) d\bx  
	+\frac{\Delta t}{\ep}\int_{\sO} \text{div}^{\eta^n_*}\bu\text{div}^{\eta^n_*}\bq d\bx 
	+\int_0^L(v^{}-v^{n+\frac12} )\psi dz \\
	&- (\Delta t)\left( P^n_{{in}}\int_{0}^1q_z\Big|_{z=0}dr-P^n_{{out}}\int_{0}^1q_z\Big|_{z=1}dr\right)  - (G(\bU^{n},\eta_*^n)\Delta_n W, \bQ).
	\end{align*}
	{{Using this form the fluid subproblem \eqref{second} can be written as $\langle \sL^\omega_n(\bu,v),(\bq,\psi)\rangle = 0$.}}
	Observe that {if we replace the test function $(\bq,\psi)$ by} $\bU=(\bu,v)$, we obtain
	\begin{align*}
	\langle \sL^\omega_n(\bu,v),(\bu,v)\rangle&=\frac12\int_{\sO}\left((R+\eta_*^n)+(R+\eta_*^{n+1})\right)  |\bu|^2  d\bx  +\int_0^Lv^2dz\\
	&+2\nu(\Delta t)\int_{\sO_{}} (R+{\eta_*^{n}})|\bD^{\eta_*^{n}}(\bu^{})|^2 d\bx  \\
	&+\frac{\Delta t}\ep\int_{\sO} |\text{div}^{\eta^n_*}\bu|^2d\bx -\int_0^Lv^{n+\frac12} v dz - \int_{\sO}(R+\eta_*^n) \bu^{n+{\frac12}}\cdot\bu   d\bx  \\
	&\hspace{-0.4in}- (\Delta t)\left( P^n_{{in}}\int_{0}^1u_z\Big|_{z=0}dr-P^n_{{out}}\int_{0}^1u_z\Big|_{z=1}dr\right)- (G(\bU^{n},\eta_*^n)\Delta_n W, \bU).
	\end{align*}
	{We now estimate the right hand-side. For this purpose, note that 
		the definition of $\eta^n_*$ implies the following crucial property} $$\frac12(R+\eta_*^{n}+R+\eta_*^{n+1}) \geq {\delta}.$$ 
	{Additionally, using Korn's Lemma
		we obtain for some constant $K=K(\sO_{\eta^n_*})>0$ that}
	\begin{align*}
	\|\nabla (\bu \circ (A^\omega_{\eta^n_*})^{-1})\|^2_{\bL^2(\sO_{\eta^n_*})} \leq K\|\bD(\bu \circ (A^\omega_{\eta^n_*})^{-1})\|^2_{\bL^2(\sO_{\eta^n_*})},
	\end{align*}
	and thus
	\begin{align*}
	\int_\sO(R+\eta^n_*)|\nabla^{\eta^n_*} \bu|^2d\bx\leq K\int_\sO(R+\eta^n_*)|\bD^{\eta^n_*}(\bu)|^2d\bx.
	\end{align*}
	Additionally observe that $\eta_*^n \in H^2(0,L)$ implies that $\|\nabla A^\omega_{\eta^n_*}\|_{\bL^\infty(\sO)}<C(\delta)$. Thus
	using the formula
	$\nabla \bu=(\nabla^{\eta^n_*}\bu) (\nabla A^\omega_{\eta^n_*})$
	we obtain for some constant, still denoted by $K=K(\delta,\eta^n_*)>0$ that
	$$\|\nabla \bu\|^2_{\bL^2(\sO)} \leq K\int_\sO (R+\eta_*^n)\|\bD^{\eta_*^{n}}(\bu)\|^2d\bx.$$
	Here the constants may depend on $n$.
	
By combining these estimates we obtain
	\begin{align*}
	\langle \sL^\omega_n(\bu,v),&(\bu,v)\rangle \geq {\delta} \|\bu\|^2_{\bL^2(\sO)} 
	+ \|v\|^2_{L^2(0,L)} +\frac1{K}\|\nabla\bu\|^2_{\bL^2(\sO)} \\
	&-\|v^{n+\frac12}\|_{L^2(0,L)}\|v\|_{L^2(0,L)}-{C}\|\bu^{n+\frac12}\|_{\bL^2(\sO)}\|\bu\|_{\bL^2(\sO)}\\
	& -|P^n_{in/out}|\|\bu\|_{\bH^1(\sO)}-C
	\|G(\bU^{n},\eta^n_*)\|_{L_2(U_0,\bL^2)}\|\Delta_n W\|_{U_0}\|\bU\|_{\bL^2(\sO)\times L^2(0,L)}.
	\end{align*}
	{By using Young's inequality and \eqref{growthG} this expression can be bounded from below as follows:}
	\begin{align*}
	&\geq \|\bu\|^2_{\bH^1(\sO)}+
	\|v\|^2_{L^2(0,L)}\\
	&-\frac{C_1}{\delta^2}\left( \|v^{n+\frac12}\|^2_{L^2(0,L)}+\|\bu^{n+\frac12}\|^2_{\bL^2(\sO)} +|P^n_{in/out}|^2+
	(\|\bu^{n}\|^2_{\bL^2(\sO)}+\|v^{n}\|^2_{L^2(0,L)})\|\Delta_n W\|^2_{U_0}+1\right),
	\end{align*}
	where $C_1$ is a positive constant.
	
	{To apply the Brouwer's fixed point theorem (Corollary 6.1.1 in \cite{GR86}), we consider a ball of radius $\rho$ in $\sU$ and consider}
	$(\bu,v)\in \sU$ such that $\|(\bu,v)\|_{\sU}={\sqrt{\|\bu\|^2_{{\bH}^{1}(\sO)} +\|v\|^2_{L^2(0,L)}} }= \rho$. {Let $\rho$ be such that} 
	\begin{align*}
	\rho^2& >\frac{C_1}{\delta^2}\left( \|v^{n+\frac12}\|^2_{L^2(0,L)}+\|\bu^{n+\frac12}\|^2_{\bL^2(\sO)} +|P^n_{in/out}|^2+
	(\|\bu^{n}\|^2_{\bL^2(\sO)}+\|v^{n}\|^2_{L^2(0,L)})\|\Delta_n W\|^2_{U_0}+1\right).
	\end{align*}
	Then we see that
	\begin{align}\label{Lpositive}
	\langle \sL^\omega_n(\bu,v),(\bu,v)\rangle \geq 0.
	\end{align}
	Thanks to the separability of $\sU$, we can consider $\{\bw_j\}_{j=1}^\infty$ an independent set of vectors whose linear span is dense in $\sU$. 
	Thus using Brouwer's fixed point theorem with \eqref{Lpositive}, we obtain the existence of $\bU_k \in$ span$\{\bw_1,..,\bw_k\}$ such that $|\bU_k|\leq \rho$ and $\sL^\omega_n(\bU_k)=0$.
	
	Then the existence of solutions $(\bu,v)$ to \eqref{second} for any $\omega\in \Omega$ and $n\in \mathbb{N}$ is standard and can be obtained by 
	 passing $k\rightarrow\infty$.
	
	{What is left to show is that among the solutions constructed above, there is a $\sF_{t^{n+1}}$-measurable random variable which solves \eqref{second}.}
	{To prove the existence of a measurable solution we will use the Kuratowski and Ryll-Nardzewski selector theorem}  (see e.g. page 52 of \cite{P72} or \cite{OPW22}). {For this purpose,} for a fixed $n$, consider $\kappa$ defined by  
	$$\kappa(\omega,\bu,v)=\kappa(\bu,v;\bu^{n+\frac12}(\omega),v^{n+\frac12}(\omega),\eta^{n}(\omega),\eta^{n+\frac12}(\omega),\Delta_nW(\omega))=\sL^\omega_n(\bu,v).$$
	Using standard techniques we notice the following properties for $\kappa$:
	\begin{enumerate}
		\item {The mapping $\omega\mapsto \kappa(\omega,\bu,v)$ is $(\sF_{t^{n+1}},\sB(\sU'))-$ measurable for any $(\bu,v)\in \sU$,}
		where $\sB(\sU')$ is the Borel $\sigma$-algebra of $\sU'$, the dual space of $\sU$.
		{This is a consequence of  the assumption \eqref{growthG} on $G$.}
		\item {The mapping $(\bu,v)\mapsto \kappa(\omega,\bu,v)$} is continuous.
	\end{enumerate}
	Next, for $\omega\in \Omega$, we consider $$F(\omega)=\{(\bu,v)\in \sU:\|\kappa(\omega,\bu,v)\|_{\sU'}=0\}.$$ 
	We have already seen that $F(\omega)$ is non-empty and one can verify that it is also closed in $\sU$. Since $\sU$ is a separable metric space, every open set $H$ in $\sU$ can be written as countable union of closed balls $K^j$ in $\sU$. Hence,
	\begin{align*}
	\{\omega\in\Omega:F(\omega)\cap {H}\neq\emptyset\}&= \cup_{j=1}^\infty\{\omega:F(\omega)\cap K^j\neq \emptyset\}\\
	&=\cup_{j=1}^\infty\{\omega:\inf_{(\bu,v)\in K^j}\left( \|\kappa(\omega,\bu,v)\|_{\sU'}\right) =0\}\in \sF_{t^{n+1}}.
	\end{align*}
	The second equality above follows as the infimum is attained because of continuity of $\kappa$ in $(\bu,v)$, and {from that fact that} each set in the countable union above belongs to $\sF_{t^{n+1}}$ because of measurability property (1) of $\kappa$.
	Hence, using results from \cite{P72} 
	we obtain the existence of a selection of $F$, given by $f$ such that $f(\omega)\in F(\omega)$ and such that $f$ is $(\sF_{t^{n+1}},\sB(\sU))$-measurable, 
	where $\sB(\sU)$ denotes the Borel $\sigma$-algebra of $\sU$.
	That is, we obtain the existence of $\sF_{t^{n+1}}$-measurable function $(\bu^{n+1},v^{n+1})$, defined to be equal to $f(\omega)$, taking values in $\sU$ (endowed with Borel $\sigma$-algebras) that solves \eqref{second}. {This completes the first part of the proof.
		
		Next we show that the solution satisfies energy estimate \eqref{energy2}. For this purpose} we will derive a pathwise inequality involving the discrete energies. We take $(\bq,\psi)=(\bu^{n+1},v^{n+1})$ in \eqref{second} and using the identity $a(a-b)=\frac12(|a|^2-|b|^2+|a-b|^2)$, we obtain
	\begin{align*}
	&\frac12\int_\sO   (R+\eta_*^n) \left(|\bu^{n+1}|^2-|\bu^{n+\frac12}|^2 + |\bu^{n+1}-\bu^{n+\frac12}|^2 \right) 
	+\frac{1}2\int_\sO \left( \eta_*^{n+1}- \eta_*^n\right)  |\bu^{n+1}|^2 d\bx\\
	&+2\nu(\Delta t)\int_{\sO_{}} (R+{\eta_*^{n}})|\bD^{\eta_*^{n}}(\bu^{n+1})|^2 d\bx + \frac{(\Delta t)}\ep\int_{\sO_{}} |\text{div}^{\eta^n_*}\bu^{n+1}|^2 d\bx\\
	&+\frac12\int_0^L|v^{n+1}|^2-|v^{n+\frac12}|^2+|v^{n+1}-v^{n+\frac12}|^2 dz \\
	&= (\Delta t)\left( P^n_{{in}}\int_{0}^1u^{n+1}_z\Big|_{z=0}dr-P^n_{{out}}\int_{0}^1u^{n+1}_z\Big|_{z=1}dr\right) \\
	&+( G( \bU^{n},\eta_*^n)\Delta_n W, (\bU^{n+1}-\bU^{n} ))+
	(G(\bU^{n},\eta_*^n)\Delta_n W, \bU^{n}).
	\end{align*}
	Observe that the discrete stochastic integral is divided into two terms. {We estimate the first term by using the Cauchy-Schwarz inequality to obtain that for some} $C(\delta)>0$ independent of $n$ {the following holds:} 
	\begin{align*}&| 
	(G(\bU^{n},\eta_*^n)\Delta_n W, (\bU^{n+1}-\bU^{n} ))| \\
	&\leq {C}\|\Delta_nW\|_{U_0}^2 \|G(\bU^{n},\eta_*^n)\|_{L_2(U_0,\bL^2)}^2  +\frac14\int_{\sO} ({R+\eta_*^n})
	\left( \bu^{n+1}-\bu^{n}\right)^2d\bx+\frac18\int_0^L(v^{n+1}-v^{n})^2dz\\
	&\leq {C}\|\Delta_nW\|_{U_0}^2 \|G(\bU^{n},\eta_*^n)\|_{L_2(U_0;\bL^2)}^2 +\frac14\int_{\sO} ({R+\eta_*^n})
	\left( \bu^{n+1}-\bu^{n}\right)^2d\bx\\
	&+ \frac14\int_0^L(v^{n+1}-v^{n+\frac12})^2dz+\frac14\int_0^L(v^{n+\frac12}-v^{n})^2dz.
	\end{align*}
	We treat the terms with $P_{in/out}$ {similarly to obtain \eqref{energy2}}. {This completes the proof of Lemma \ref{existu}.}
\end{proof}
{{
		The following theorem provides uniform estimates on the expectation of the kinetic and elastic energy
		for the full, semidiscrete coupled problem (uniform in the number of time steps $N$ and in $\ep$).
		Uniform estimates on the expectation of the numerical dissipation are derived as well.
}}
\begin{theorem}[Uniform Estimates]\label{energythm}
	For any $N>0$, $\Delta t=\frac{T}{N}$, there exists a constant $C>0$ that depends on the initial data, $\delta$, $T$, and $P_{in/out}$ and is independent of $N$ and $\ep$ such that
	\begin{enumerate}
		\item $\bE\left( \max_{1\leq n \leq N}E^{n}\right)  <C$, $ \bE\left( \max_{0\leq n\leq N-1}E^{n+\frac12}\right) <C.$
		\item $\bE\sum_{n=0}^{N-1} D^n <C$.
		\item $\bE\sum_{n=0}^{N-1}\int_\sO\left((R+\eta_*^n) |\bu^{n+1}-\bu^{n}|^2 \right) d\bx+\int_0^L|v^{n+1}-v^{n+\frac12}|^2 dz<C.$
		\item $\bE \sum_{n=0}^{N-1}\int_0^L\left( |v^{n+\frac12}-v^n|^2  +|\partial_z\eta^{n+1}-\partial_z\eta^{n}|^2 + |\partial_{zz}\eta^{n+1}-\partial_{zz}\eta^{n}|^2 \right) dz<C,$
	\end{enumerate}
	{where $E^n$ and $D^n$} are defined in \eqref{EnDn}.
\end{theorem}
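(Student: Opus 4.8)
The plan is to derive a single per-step energy inequality by combining the two subproblem estimates, telescope it in $n$, and then close a discrete Gronwall inequality for the expected running maximum $\bE\max_n E^n$; the only genuinely delicate ingredient is controlling the discrete stochastic term by the Burkholder--Davis--Gundy inequality. First I add the structure identity \eqref{energy1} to the fluid inequality \eqref{energy2}, in the latter keeping the discrete stochastic integral $(G(\bU^{n},\eta_*^n)\Delta_n W,\bU^{n})$ with its sign (as it arises in the proof of Lemma~\ref{existu}), not in absolute value. Using $\eta^{n+1}=\eta^{n+\frac12}$ and absorbing the $\tfrac14\|v^{n+\frac12}-v^n\|^2_{L^2(0,L)}$ on the right of \eqref{energy2} into the matching term in $C_1^n$, one obtains the pathwise bound
\[
E^{n+1}+D^n+\mathcal D^n \le E^n + C\Delta t\big((P^n_{in})^2+(P^n_{out})^2\big) + C\|\Delta_n W\|_{U_0}^2\|G(\bU^{n},\eta_*^n)\|_{L_2(U_0;\bL^2)}^2 + (G(\bU^{n},\eta_*^n)\Delta_n W,\bU^{n}),
\]
where $\mathcal D^n\ge 0$ collects $C_2^n$ and all leftover numerical dissipation and, up to a fixed factor, dominates the quantities appearing in items (3) and (4). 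Summing over $n=0,\dots,m-1$, dropping $E^m\ge0$, and bounding the deterministic forcing $C\Delta t\sum_{n=0}^{N-1}((P^n_{in})^2+(P^n_{out})^2)$ by $C(\|P_{in}\|^2_{L^2(0,T)}+\|P_{out}\|^2_{L^2(0,T)})$ via Jensen's inequality ($P^n_{in/out}$ being the average of $P_{in/out}$ over $[t^n,t^{n+1}]$), gives a telescoped inequality valid for all $m\le N$.

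The step that makes this work — and explains why the final constant is allowed to depend on $\delta$ — is the \textbf{deterministic control of the cut-off displacement}: by \eqref{theta}--\eqref{eta*}, $\eta_*^n=\eta^{k}$ for some $k\le n$ with $\inf_z(\eta^k+R)>\delta$ and $\|\eta^k+R\|_{H^s(0,L)}<1/\delta$, so the embedding $H^s(0,L)\hookrightarrow L^\infty(0,L)$ yields the two-sided bound $\delta<R+\eta_*^n(z)\le C(\delta)$ for all $z$, $n$, $\omega$. With \eqref{growthG} this gives $\|G(\bU^{n},\eta_*^n)\|^2_{L_2(U_0;\bL^2)}\le C(\delta)(\|\bu^n\|^2_{\bL^2(\sO)}+\|v^n\|^2_{L^2(0,L)})\le C(\delta)E^n$, and likewise $\|\bU^n\|^2_{\bL^2}\le C(\delta)E^n$. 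Taking expectations and using that $\Delta_n W$ is independent of $\sF_{t^n}$ while $\bU^n,\eta_*^n$ are $\sF_{t^n}$-measurable (together with $\bE\|\Delta_n W\|^2_{U_0}\lesssim\Delta t$, i.e. the It\^o isometry on $[t^n,t^{n+1}]$), the noise-coefficient sum is controlled by $C(\delta)\Delta t\sum_{n=0}^{m-1}\bE E^n$.

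It remains to handle $M_m:=\sum_{n=0}^{m-1}(G(\bU^{n},\eta_*^n)\Delta_n W,\bU^{n})$, which is a discrete $(\sF_{t^n})$-martingale since $\bE[(G^n\Delta_n W,\bU^n)\mid\sF_{t^n}]=0$. The discrete Burkholder--Davis--Gundy inequality, the bound $\bE[(G^n\Delta_n W,\bU^n)^2\mid\sF_{t^n}]\le\|\bU^n\|^2_{\bL^2}\,\Delta t\,\|G^n\|^2_{L_2(U_0;\bL^2)}\le C(\delta)\Delta t(E^n)^2$, the elementary estimate $\sum_n(E^n)^2\le(\max_n E^n)\sum_n E^n$, and Young's inequality give
\[
\bE\max_{1\le m\le N}|M_m| \le \tfrac14\,\bE\max_{0\le n\le N}E^n + C(\delta)\,\Delta t\sum_{n=0}^{N-1}\bE E^n .
\]
Inserting this and the previous bounds into the telescoped inequality, taking the maximum over $m\le N'$ for each $N'\le N$, then expectations, and absorbing $\tfrac14\bE\max_{n\le N'}E^n$ into the left, one arrives at $\Phi_{N'}\le C(E^0+\|P_{in}\|^2_{L^2(0,T)}+\|P_{out}\|^2_{L^2(0,T)})+C(\delta)\Delta t\sum_{n=0}^{N'-1}\Phi_n$ with $\Phi_{N'}:=\bE\max_{0\le n\le N'}E^n$ (which is a priori finite by a straightforward induction on $n$ from \eqref{energy1}--\eqref{energy2}). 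The discrete Gronwall inequality then gives $\Phi_N\le Ce^{C(\delta)T}$, independent of $N$ (as $N\Delta t=T$) and of $\ep$ — the latter because the penalty term $\tfrac{\Delta t}{\ep}\int_\sO|\mathrm{div}^{\eta_*^n}\bu^{n+1}|^2$ sits inside $D^n$ on the favourable side of \eqref{energy2} and never appears on the right. This is (1), and (1) for $E^{n+\frac12}$ follows from $E^{n+\frac12}\le E^n$ in \eqref{energy1}. Finally, taking $m=N$ in the telescoped inequality and expectations — where now $\bE M_N=0$, so no BDG is needed — yields $\bE\sum_{n=0}^{N-1}(D^n+\mathcal D^n)\le C$, hence (2), (3), (4), each summand being nonnegative. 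The \emph{main obstacle} is exactly this last closure: one must keep the stochastic term signed, apply BDG, and tune the Young/absorption constants so that only a fraction of $\bE\max_n E^n$ moves to the left while the rest feeds the Gronwall sum; what prevents the loop from being circular is the \emph{deterministic} $L^\infty$-bound on $\eta_*^n$, and for the same reason the argument needs no higher-moment estimates.
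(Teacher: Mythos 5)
Your proposal is correct and follows essentially the same route as the paper: add \eqref{energy1} and \eqref{energy2}, sum in $n$, control the noise-coefficient term via the tower property and the martingale term via the discrete Burkholder--Davis--Gundy inequality together with the max-times-sum trick and Young's inequality, exploit the deterministic $\delta$-bounds on $\eta_*^n$ so that \eqref{growthG} closes in terms of $E^n$, absorb a fraction of the running maximum, and conclude with the discrete Gronwall inequality. The only deviations are cosmetic: you keep the stochastic increment signed so that items (2)--(4) follow from $\bE M_N=0$ without BDG, whereas the paper bounds it in absolute value inside the same maximal estimate, and you run Gronwall on the full $E^n$ rather than only its kinetic part as the paper does.
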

\begin{proof}{We add the energy estimates for the two subproblems \eqref{energy1} and \eqref{energy2} to obtain}:
	\begin{equation}
	\begin{split}\label{discreteenergy}
	E^{n+1}+D^{n} &+C^n_1+C^{n}_2\leq E^{n}+ C\Delta t((P^n_{in})^2+(P^n_{out})^2) \\
	&+C\|\Delta_nW\|_{U_0}^2
	\|G(\bU^{n},\eta_*^n)\|_{L_2(U_0;\bL^2)}^2+\left|
	(G(\bU^{n},\eta_*^n)\Delta_n W, \bU^{n}) \right|.
	\end{split}
	\end{equation}
	Then for any $m\geq 1$, summing $0\leq n\leq m-1$ gives us
	\begin{equation}\label{energysum}
	\begin{split}
	&E^m+\sum_{n=0}^{m-1}D^{n} +\sum_{n=0}^{m-1}C_1^{n}+\sum_{n=0}^{m-1}C_2^{n}
	\leq E^0+ C\,\Delta t\sum_{n=0}^{m-1}\left( (P^n_{{in}})^2 +(P^n_{{out}})^2\right) \\
	&+\sum_{n=0}^{m-1}\left|
	(G(\bU^n,\eta_*^n)\Delta_n W, \bU^{n} )\right|
	+\sum_{n=0}^{m-1} \|G(\bU^{n},\eta_*^n)\|_{L_2(U_0;\bL^2)}^2\|\Delta_nW\|_{U_0}^2.
	\end{split}\end{equation}
	
	Next we take supremum over $1 \leq m \leq N$ and then take expectation on both sides of \eqref{energysum}. We begin by treating the right hand side terms. First we have
	$$\Delta t\sum_{n=0}^{N-1} (P^n_{{in/out}})^2\leq\|P_{in/out}\|^2_{L^2(0,T)} .$$
	Next we apply the discrete Burkholder-Davis-Gundy inequality (see e.g. Theorem 2.7 \cite{OPW22}) 
	and write $\bu^{n+\frac12}=\bu^n$ to obtain for some $C(\delta)>0$ that
	\begin{align*}
	\bE&\max_{1\leq m\leq N}|\sum_{n=0}^{m-1}
	(G(\bU^{n},\eta_*^n)\Delta_n W, \bU^{n} )| \\
	&\leq {C}\bE\left[\Delta t\sum_{n=0}^{N-1}
	\|G(\bU^{n},\eta_*^n)\|^2_{L_2(U_0,\bL^2)}\left( \left\|(\sqrt{R+\eta_*^n})\bu^{n}\right\|^2_{\bL^2(\sO)}+\|v^n\|_{L^2(0,L)}^2\right) \right]^{\frac12}\\
	& \leq {C}{}\bE\left[ \left( \max_{0\leq m\leq N}\left\|(\sqrt{R+\eta_*^m})\bu^{m}\right\|^2_{\bL^2(\sO)}+\|v^m\|^2_{L^2(0,L)}\right) \sum_{n=0}^{N-1}
	\Delta t \left( \|\sqrt{(R+\eta^n_*)}\bu^{n}\|^2_{\bL^2(\sO)}+\|v^n\|_{L^2(0,L)}^2\right) \right]^{\frac12}\\
	&\leq \frac{1}2\|(\sqrt{R+\eta_0})\bu_0\|^2_{\bL^2(\sO_{})} +\frac12\|v_0\|^2_{L^2(0,L)}+ \frac12\bE\max_{1\leq m\leq N}\left[ \left\|(\sqrt{R+\eta_*^m})\bu^{m}\right\|^2_{\bL^2(\sO)}+\|v^m\|^2_{L^2(0,L)}\right] \\
	&+ {C \Delta t}\bE\left( \sum_{n=0}^{N-1}
	\|(\sqrt{R+\eta^n_*})\bu^n\|^2_{\bL^2(\sO)}+\|v^n\|_{L^2(0,L)}^2\right).
	\end{align*}

	Also using the tower property and \eqref{growthG} for each $n=0,...,m-1$
	we write
	\begin{align}
	\bE[\|G(\bU^n,\eta_*^n)\|_{L_2(U_0,\bL^2)}^2\|\Delta_nW\|_{U_0}^2]&=\bE[\bE[\|G(\bU^n,\eta_*^n)\|_{L_2(U_0,\bL^2)}^2\|\Delta_nW\|_{U_0}^2|\sF_n]]\notag\\
	&=\bE[\|G(\bU^n,\eta_*^n)\|_{L_2(U_0,\bL^2)}^2\bE[\|\Delta_nW\|_{U_0}^2\|\sF_n]]\notag\\
	&=\Delta t(\Tr \bQ)\bE\|G(\bU^n,\eta_*^n)\|^2_{L_2(U_0,\bL^2)}\notag\\
	&\leq C\Delta t(\Tr \bQ)\bE[\|\sqrt{(R+\eta_*^n)}\bu^n\|^2_{\bL^2(\sO)}+\|v^n\|^2_{L^2(0,L)}]\label{tower}.
	\end{align}
	Thus we obtain for some $C>0$ depending on {$\delta$ and on $\Tr{\bQ}$, that the following holds:}
	\begin{equation}
	\begin{split}\label{gronwall}
	\bE\max_{1\leq n\leq N}E^{n}&+ \bE\sum_{n=0}^{N-1}D^n +\bE\sum_{n=0}^{N-1}C_1^{n}+\bE\sum_{n=0}^{N-1}C_2^{n}\leq CE^0+C\|P_{in/out}\|^2_{L^2(0,T)}
	\\&+ C\Delta t \bE\left[\sum_{n=0}^{N-1} \|(\sqrt{R+\eta^n_*})\bu^n\|^2_{\bL^2(\sO)}+\|v^n\|^2_{L^2(0,L)}\right]\\
	&+ \frac12\bE\max_{1\leq n\leq N}\left[ \left\|(\sqrt{R+\eta_*^n})\bu^{n}\right\|^2_{\bL^2(\sO)}+\|v^n\|^2_{L^2(0,L)}\right].
	\end{split}
	\end{equation}
	By absorbing the last term on the right hand-side of \eqref{gronwall} we obtain
	\begin{align*}
	\bE\max_{1\leq n\leq N}&\left( \|(\sqrt{R+\eta^n_*})\bu^n\|^2_{\bL^2(\sO)} +\|v^n\|^2_{L^2(0,L)}\right) \leq CE^0+ C\|P_{in/out}\|^2_{L^2(0,T)}\\
	& +\sum_{n=1}^{N-1} \Delta t\bE \max_{1\leq m\leq n}\left(\|(\sqrt{R+\eta^m_*})\bu^m\|^2_{\bL^2(\sO)}+\|v^m\|^2_{L^2(0,L)}\right) .
	\end{align*}
	Applying the discrete Gronwall inequality to $\bE\max_{1\leq n\leq N}(\|(\sqrt{R+\eta^n_*})\bu^n\|_{\bL^2(\sO)}^2+\|v^n\|^2_{L^2(0,L)})$ we obtain
	$$\bE\max_{1\leq n \leq N}\left( \|(\sqrt{R+\eta^n_*})\bu^n\|_{\bL^2(\sO)}^2+\|v^n\|^2_{L^2(0,L)}\right) \leq C e^T,$$
	where $C$ depends only on the given data and in particular $\delta$.
	
	Hence for $E^n,D^n$ defined in \eqref{EnDn} we have obtained the desired energy estimate:
	\begin{equation}
	\begin{split}
	\bE\max_{1\leq n\leq N}E^{n}+ \bE\sum_{n=0}^{N-1}D^n&+\bE\sum_{n=0}^{N-1}C_1^{n}+\bE\sum_{n=0}^{N-1}C_2^{n} \\
	&\leq C(E^0  + \|P_{in}\|^2_{L^2(0,T)}+\|P_{out}\|^2_{L^2(0,T)}
	+ Te^T).
	\end{split}
	\end{equation}
\end{proof}

\subsection{Approximate solutions}\label{subsec:approxsol}
In this subsection, we use the solutions $(\bu^{n+\frac{i}2},\eta^{n+\frac{i}2},v^{n+\frac{i}2})$, $i=0,1$, defined for every $N \in \mathbb{N}\setminus \{0\}$ at discrete times to {define} approximate solutions {on the entire interval $(0,T)$. We start by introducing approximate solutions} that are piecewise constant on each subinterval $ [n\Delta t, (n+1)\Delta t)$ as 
\begin{align}\label{AppSol}
\bu_{N}(t,\cdot)=\bu^n, \,
\quad \eta_{N}(t,\cdot)=\eta^n,\quad \eta_{N}^*(t,\cdot)=\eta_*^{n}, \quad v_{N}(t,\cdot)=v^n,\quad  {v}^{\#}_{N}(t,\cdot)=v^{n+\frac12}.
\end{align}
Observe that all of the processes defined above are adapted to the given filtration $(\sF_t)_{t \geq 0}$.\\
We also need to define the following time-shifted piecewise constant functions on $t \in (n\Delta t, (n+1)\Delta t]$, which will be useful in obtaining tightness results using compactness methods:
\begin{align*}
\bu^+_{N}(t,\cdot)=\bu^{n+1},\quad \eta^+_{N}(t,\cdot)=\eta^{n+1},\quad v^+_{N}(t,\cdot)=v^{n+1}.
\end{align*}
Furthermore we define the corresponding piecewise linear interpolations of the structure displacement, velocity and the fluid velocity: for $t \in [t^n,t^{n+1}]$ we let  
\begin{equation}
\begin{split}\label{approxlinear}
&\tilde\bu_{N}( t,\cdot)=\frac{t-t^n}{\Delta t} \bu^{n+1}+ \frac{t^{n+1}-t}{\Delta t} \bu^{n}, \quad \tilde v_{N}(t,\cdot)=\frac{t-t^n}{\Delta t} v^{n+1}+ \frac{t^{n+1}-t}{\Delta t} v^{n}\\
& \tilde\eta_{N}( t,\cdot)=\frac{t-t^n}{\Delta t} \eta^{n+1}+ \frac{t^{n+1}-t}{\Delta t} \eta^{n}, \quad \tilde\eta^*_{N}( t,\cdot)=\frac{t-t^n}{\Delta t} \eta^{n+1}_*+ \frac{t^{n+1}-t}{\Delta t} \eta^{n}_*.
\end{split}
\end{equation}
Observe that
\begin{align}\label{etaderi}
\frac{\partial\tilde\eta_{N}}{\partial t}=v^{\#}_{N},\quad \frac{\partial\tilde\eta^*_{N}}{\partial t}
={\sum_{n=0}^{N-1}\theta_\delta(\eta^{n+1})}v^{\#}_{N}\chi_{(t^n,t^{n+1})}:=v_{N}^*
\quad a.e. \text{ on } (0,T),  
\end{align}{
	where $v^{\#}_N$ was introduced in \eqref{AppSol}.}
\begin{lem}\label{bounds}
	Given
	${ \bu_0} \in \bL^2(\sO_{})$, $\eta_0 \in  H^2_0(0,L)$, $v_0 \in L^2(0,L)$, 
	for a fixed $\delta>0$, we have that
	\begin{enumerate}
		\item $\{\eta_{N}\},\{\eta_{N}^*\}$ and thus $\{\tilde \eta_{N}\},\{\tilde\eta_{N}^*\}$ are bounded independently of $N$ and $\ep$ in\\ $L^2(\Omega;L^\infty(0,T;H^2_0(0,L)))$.
		\item $\{v_{N}\},\{v_{N}^{\#}\},\{v_{N}^{*}\},\{v_{N}^+\}$ are bounded independently of $N$ and $\ep$ in\\ $L^2(\Omega;L^\infty(0,T;L^2(0,L)))$.
		\item $\{\bu_{N}\}$ is bounded independently of $N$ and $\ep$ in $L^2(\Omega;L^\infty(0,T;\bL^2(\sO)))$.
		\item $\{\bu^+_{N}\}$ is bounded independently of $N$ and $\ep$ in 
		$		 L^2(\Omega;L^2(0,T;V) \cap L^\infty(0,T;\bL^2(\sO))).$
	{\cred		\item $\{v^+_{N}\}$ is bounded independently of $N$ and $\ep$ in 
		$		 L^2(\Omega;L^2(0,T;H^{\frac12}(0,L)) ).$}
		\item $\{\frac1{\sqrt{\ep}}\text{div}^{\eta^*_{N}}\bu^+_{N}\}$ is bounded independently of $N$ and $\ep$ in $L^2(\Omega;L^2(0,T;L^{2}(\sO)))$.
	\end{enumerate} 
\end{lem}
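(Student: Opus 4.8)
The proof is essentially a bookkeeping of the uniform estimates already obtained in Theorem~\ref{energythm}, the only genuinely structural input being two \emph{deterministic} properties that the cut-off construction \eqref{eta*} forces on $\eta^n_*$. Since $\theta_\delta(\eta^k)=1$ requires $\delta<\inf_{z}(R+\eta^k(z))$ and $\|R+\eta^k\|_{H^s(0,L)}<\tfrac1\delta$, and since $\eta^n_*$ always equals one of the admissible configurations $\eta^k$ with $k\le n$ (or $\eta^0$, which is admissible by \eqref{etainitial}), we have the pathwise bounds
$$R+\eta^n_*(z)\ge\delta\quad\text{for all }z, \qquad \|R+\eta^n_*\|_{H^s(0,L)}\le\tfrac1\delta,\qquad s\in(\tfrac32,2),$$
uniformly in $n$, $N$, $\ep$ and $\bP$-a.s.\ in $\omega$. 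Because $H^s(0,L)\hookrightarrow C^1([0,L])$ for $s>\tfrac32$, the ALE maps $A^\omega_{\eta^n_*}(z,r)=(z,(R+\eta^n_*)r)$ and their inverses are then bounded in $C^1(\overline{\sO})$ by a constant $C(\delta)$ independent of $n$, $N$, $\ep$, $\omega$.

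Items (1)--(3) are now immediate from Theorem~\ref{energythm}(1). For (1), Poincar\'e's inequality on $H^2_0(0,L)$ bounds $\|\eta^k\|_{H^2_0}^2$ by $\|\partial_z\eta^k\|_{L^2}^2+\|\partial_{zz}\eta^k\|_{L^2}^2\le 2E^k$; since $\eta_N$ and $\eta^*_N$ take values among the $\eta^k$ and $\tilde\eta_N,\tilde\eta^*_N$ are convex combinations of two consecutive $\eta^k$'s, all four are bounded in $L^2(\Omega;L^\infty(0,T;H^2_0(0,L)))$ by $C\big(\bE\max_n E^n\big)^{1/2}$. For (2), $v_N$ and $v^+_N$ take values among the $v^n$, $v^\#_N$ among the $v^{n+\frac12}$, each controlled by $E^n$ resp.\ $E^{n+\frac12}$; and since $\theta_\delta$ is $\{0,1\}$-valued, $|v^*_N|\le|v^\#_N|$ pointwise. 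For (3), the lower bound $R+\eta^n_*\ge\delta$ gives $\|\bu^n\|_{\bL^2(\sO)}^2\le\frac2\delta E^n$, hence $\bE\max_n\|\bu^n\|_{\bL^2(\sO)}^2\le\frac2\delta\,\bE\max_n E^n<C$.

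For (6), summing the definition \eqref{EnDn} of $D^n$ over $n$ gives $\int_0^T\int_\sO\frac1\ep|\text{div}^{\eta^*_N}\bu^+_N|^2\,d\bx\,dt=\Delta t\sum_{n=0}^{N-1}\frac1\ep\int_\sO|\text{div}^{\eta^n_*}\bu^{n+1}|^2\,d\bx\le\sum_{n=0}^{N-1}D^n$, so Theorem~\ref{energythm}(2) yields the bound after taking expectations. For (4), the $L^2(\Omega;L^\infty(0,T;\bL^2(\sO)))$ bound is the same argument as in (3) applied to $\bu^{n+1}$. For the $L^2(\Omega;L^2(0,T;V))$ bound, write $\int_0^T\|\bu^+_N\|_V^2\,dt=\Delta t\sum_n\|\bu^{n+1}\|_{\bH^1(\sO)}^2$; the Korn inequality on $\sO_{\eta^n_*}$ pulled back to $\sO$ through $A^\omega_{\eta^n_*}$, using the uniform $C^1$-bounds on the ALE maps, provides $K=K(\delta)$ independent of $n,N,\ep,\omega$ with $\int_\sO|\nabla\bu^{n+1}|^2\,d\bx\le K\int_\sO(R+\eta^n_*)|\bD^{\eta^n_*}(\bu^{n+1})|^2\,d\bx$, and combining this with $\Delta t\sum_n\|\bu^{n+1}\|_{\bL^2(\sO)}^2\le T\max_n\|\bu^{n+1}\|_{\bL^2(\sO)}^2$ and $\Delta t\sum_n\int_\sO(R+\eta^n_*)|\bD^{\eta^n_*}(\bu^{n+1})|^2\,d\bx\le\frac1{2\nu}\sum_n D^n$ reduces (4) to Theorem~\ref{energythm}(1)--(2). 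Finally (5) follows from (4): by the kinematic constraint in \eqref{second}, $v^{n+1}$ is the radial trace of $\bu^{n+1}$ on $\Gamma$, so the trace theorem on the \emph{fixed} domain $\sO$ gives $\|v^{n+1}\|_{H^{1/2}(0,L)}\le C\|\bu^{n+1}\|_{\bH^1(\sO)}$, whence $\bE\int_0^T\|v^+_N\|_{H^{1/2}(0,L)}^2\,dt\le C\,\bE\int_0^T\|\bu^+_N\|_V^2\,dt<C$.

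The one point that deserves emphasis, and the only place where anything beyond Theorem~\ref{energythm} and elementary functional inequalities is used, is the \textbf{uniformity in $n$, $N$, $\ep$, $\omega$ of the Korn constant} (and, implicitly, of the norm equivalence between $\|\cdot\|_{\bH^1(\sO)}$ and the ALE pull-back of $\|\cdot\|_{\bH^1(\sO_{\eta^n_*})}$). In the deterministic theory one fixes the domain via a maximal and minimal displacement; here the domain is a random variable, and it is exactly the cut-off $\theta_\delta$ that supplies the deterministic bounds $\delta<\inf_z(R+\eta^n_*)$ and $\|R+\eta^n_*\|_{H^s}<\tfrac1\delta$, hence deterministic $C^1$-control of the ALE maps and a deterministic Korn constant $K(\delta)$. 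Every remaining estimate is routine.
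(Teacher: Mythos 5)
Your proof is correct and follows essentially the same route as the paper's: the pathwise deterministic bounds $\delta\le R+\eta^n_*$ and $\|R+\eta^n_*\|_{H^s}\le\frac1\delta$ coming from the cut-off give uniform $W^{1,\infty}$ control of the ALE maps and a uniform Korn constant (the paper cites Lemma 1 of \cite{V12} for exactly this), after which every item reduces to the uniform energy and dissipation bounds of Theorem \ref{energythm}, with (5) obtained from (4) via the trace of $\bu^{n+1}$ on $\Gamma$, just as in the paper.
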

\begin{proof}
	The proofs of the first three statements and that of (6) follow immediately from Theorem \ref{energythm}. 
	For the second statement, we write $v^*_N(t,\omega,z)=\theta_N(t,\omega)v^{\#}_N(t,\omega,z)$ where we set
	$$\theta_N(t,\omega)=\sum_{n=0}^{N-1}\theta_\delta(\eta^{n+1}(\omega))\mathbbm{1}_{[t^n,t^{n+1})}(t).$$
	Since $\theta_N(t) \leq 1$ for any $t\in[0,T]$ we obtain,
	\begin{align*}
	\bE[\| v^*_N\|^2_{L^\infty(0,T;L^2(0,L))}] \leq \bE[\| v^\#_N\|^2_{L^\infty(0,T;L^2(0,L))}] \leq C.
	\end{align*}
	In order to prove (4) observe that for each $\omega\in \Omega$, $\nabla\bu^{n+1}=\nabla^{\eta^{n}_*}\bu^{n+1} (\nabla A^\omega_{\eta^{n}_*})$. Thus we have, 
	\begin{align*}
	\delta \bE\int_{\sO}& |\nabla\bu^{n+1}|^2d\bx \leq  \bE\int_{\sO}(R+\eta_*^{n})|\nabla\bu^{n+1}|^2d\bx
	=\bE\int_{\sO}(R+\eta^{n}_*)|\nabla^{\eta_*^{n}}\bu^{n+1}\cdot \nabla A^\omega_{\eta_*^{n}}|^2d\bx\\
	&\leq C(\delta)\bE\int_{\sO}(R+\eta^{n}_*)|\nabla^{\eta_*^{n}}\bu^{n+1}|^2d\bx
	 \leq {K}C(\delta)\bE\int_{\sO}(R+\eta_*^{n})|\bD^{\eta_*^{n}}\bu^{n+1}|^2d\bx,
	\end{align*}
	where $K>0$ is the universal Korn constant that depends only on the reference domain $\sO$. This result follows from Lemma 1 in \cite{V12} {because of} the uniform bound $\|R+\eta^n_*(\omega)\|_{H^s(0,L)} <\frac1{\delta}$, for $\frac32<s<2$ and every $\omega \in \Omega$, which implies that for some $C(\delta)>0$ 
	$$\|A^\omega_{\eta^n_*}\|_{\bW^{1,\infty}}<C,\quad \|(A^\omega_{\eta^n_*})^{-1}\|_{\bW^{1,\infty}}<C, \quad n=1,...,N, N\in\mathbb{N},$$ for every $\omega\in \Omega$. 
	Thus there exists $C>0$, independent of $N$ and $ \ep$, such that
	\begin{align}\label{uboundV}
	\bE\int_0^T\int_{\sO}|\nabla\bu^{+}_{N}|^2d\bx ds =\bE\sum_{n=0}^{N-1} \Delta t\int_{\sO}|\nabla\bu^{n+1}|^2d\bx \leq C(\delta).
	\end{align}
	Statement (5) is then a result of statement (4) and the fact that, by construction, $v_N^+$ is the trace of the vertical component of the time-shifted fluid velocity $\bu^+_{N}$ on the top lateral boundary
\end{proof}
\section{Passing to the limit as $N\rightarrow\infty$}\label{sec:limit}
{Our goal is to construct solutions to the coupled FSI problem as limits of subsequences of approximate solutions as $N\rightarrow\infty$ and $\ep \to 0$. 
	The $\delta$ approximation of the problem via cutoff functions will be dealt with using a stopping time argument at the very end of the manuscript.
	Our approach is to first let $N\to\infty$, obtain approximate solutions for each $\ep > 0$, and then let $\ep\to 0$. 
	
	Thus, we first fix $\ep>0$ and $\delta>0$ and let $N\to\infty$. For this purpose we recall}
the bounds obtained in Lemma \ref{bounds} that provide weakly and weakly-* convergent subsequences. In order to upgrade these convergence results to obtain almost sure convergence of the stochastic approximate solutions, which is required to be able to pass $N \rightarrow \infty$, we use compactness arguments and establish tightness of the laws of the approximate random variables defined in Section \ref{subsec:approxsol}.
\subsection{{ Tightness results}} 
 Given our stochastic setting we do not expect the fluid and structure velocities to be differentiable in time.
Hence the tightness results, i.e. Lemmas \ref{tightuv} and \ref{tightl2} below, will rely on an application of the Aubin-Lions theorem for the structure displacements and its following variant for the velocities \cite{S87} :
\begin{lem}\cite{S87} \label{compactLp}
	Let	 the translation in time by $h$ of a function $f$ be denoted by:
	$$T_h f(t,\cdot)=f(t-h,\cdot), \quad h\in \R.$$ Assume that $\mathcal{Y}_0\subset\mathcal{Y}\subset\mathcal{Y}_1$ are Banach spaces such that $\mathcal{Y}_0$ and $\mathcal{Y}_1$ are reflexive with compact embedding of $\mathcal{Y}_0$ in $\mathcal{Y}$, 
	then for any $m>0$, the embedding	$$ \{\bu \in L^2(0,T;\mathcal{Y}_0):\sup_{0<h< T} \frac1{h^m}\|T_h\bu-\bu\|_{L^2(h,T;\sY_1)}<\infty \} \hookrightarrow L^2(0,T;\mathcal{Y}),$$	is compact.
\end{lem}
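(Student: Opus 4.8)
The plan is to reduce the statement to the classical Aubin--Lions--Simon compactness framework by treating the fractional-in-time difference quotient condition as a replacement for a time-derivative bound. Let $\sB$ denote the set $\{\bu\in L^2(0,T;\sY_0):\|\bu\|_{L^2(0,T;\sY_0)}+\sup_{0<h<T}h^{-m}\|T_h\bu-\bu\|_{L^2(h,T;\sY_1)}\le 1\}$; it suffices to show $\sB$ is relatively compact in $L^2(0,T;\sY)$, and by a scaling argument this gives the stated embedding. First I would recall the Simon-type characterization of relative compactness in $L^2(0,T;\sY)$: a bounded family $\sB\subset L^2(0,T;\sY_0)$ is relatively compact in $L^2(0,T;\sY)$ provided (i) the spatial translates/truncations are controlled, which here is automatic since $\sY_0\hookrightarrow\hookrightarrow\sY$ is compact and the family is bounded in $L^2(0,T;\sY_0)$, and (ii) $\sup_{\bu\in\sB}\|T_h\bu-\bu\|_{L^2(h,T;\sY)}\to 0$ as $h\to 0$. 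So the whole lemma comes down to verifying the uniform equicontinuity in time (ii), but measured in the intermediate space $\sY$ rather than in $\sY_1$.

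The key step is an interpolation-type inequality of Ehrling/Lions--Aubin type: for every $\mu>0$ there is a constant $C_\mu$ such that
\begin{equation}\label{ehrling}
\|w\|_{\sY}\le \mu\|w\|_{\sY_0}+C_\mu\|w\|_{\sY_1}\qquad\text{for all }w\in\sY_0,
\end{equation}
which holds precisely because the embedding $\sY_0\hookrightarrow\sY$ is compact and $\sY\hookrightarrow\sY_1$ is continuous. Applying \eqref{ehrling} with $w=T_h\bu-\bu$ and integrating in $t$ over $(h,T)$ gives
\begin{equation*}
\|T_h\bu-\bu\|_{L^2(h,T;\sY)}\le \mu\,\|T_h\bu-\bu\|_{L^2(h,T;\sY_0)}+C_\mu\,\|T_h\bu-\bu\|_{L^2(h,T;\sY_1)}.
\end{equation*}
The first term on the right is bounded by $2\mu\|\bu\|_{L^2(0,T;\sY_0)}\le 2\mu$ uniformly in $h$ and in $\bu\in\sB$, and the second term is bounded by $C_\mu h^m\le C_\mu h^m$, which tends to $0$ as $h\to 0$ for fixed $\mu$. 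Hence $\limsup_{h\to0}\sup_{\bu\in\sB}\|T_h\bu-\bu\|_{L^2(h,T;\sY)}\le 2\mu$, and since $\mu>0$ is arbitrary this limsup is $0$, establishing (ii).

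With (i) and (ii) in hand, the Fréchet--Kolmogorov / Simon criterion for compactness of subsets of $L^p(0,T;\sY)$ applies and yields that $\sB$ is relatively compact in $L^2(0,T;\sY)$; rescaling removes the normalization to the unit ball. The main obstacle is really just bookkeeping around the fact that the difference quotient is controlled only in the weaker space $\sY_1$ while compactness is sought in $\sY$: the reflexivity of $\sY_0$ and $\sY_1$ guarantees that bounded sequences have weakly convergent subsequences (so one can alternatively run the argument extracting weak limits and upgrading via \eqref{ehrling}), and the compact embedding $\sY_0\hookrightarrow\hookrightarrow\sY$ is exactly what powers the Ehrling inequality \eqref{ehrling}; no condition on $\sY_1$ beyond continuity of $\sY\hookrightarrow\sY_1$ is needed, which is why any exponent $m>0$ works. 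One should double-check the endpoint behaviour in $t$ (the translate $T_h\bu$ is only defined on $(h,T)$), but this is handled in the standard way by extending $\bu$ by zero outside $(0,T)$ or restricting attention to the interval $(h,T)$, exactly as in \cite{S87}.
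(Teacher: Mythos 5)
Your argument is correct: the paper itself gives no proof of this lemma but simply cites Simon \cite{S87}, and your route (boundedness in $L^2(0,T;\mathcal{Y}_0)$ plus the Ehrling interpolation inequality to upgrade the $\mathcal{Y}_1$-control of time translates to equicontinuity in $\mathcal{Y}$, then Simon's compactness criterion in $L^p(0,T;\mathcal{Y})$) is exactly the standard proof of the cited result, with the $h^m$ bound only used to force the translate term to vanish as $h\to 0$. The only cosmetic slips are the redundant ``$C_\mu h^m\le C_\mu h^m$'' and the observation, which you already make, that reflexivity of $\mathcal{Y}_0,\mathcal{Y}_1$ is not actually needed for this argument.
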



\begin{lem}\label{tightuv} {{For any $0\leq \alpha<1$ the laws of $\bu^+_N$ and $v^+_N$ are tight in $L^2(0,T;\bH^\alpha(\sO))$ and $ L^2(0,T;L^2(0,L))$, respectively. }}
\end{lem}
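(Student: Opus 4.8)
The plan is to verify the hypotheses of the compactness criterion in Lemma~\ref{compactLp} (and the classical Aubin--Lions lemma), applied to a suitable triple of spaces, and then invoke a standard tightness-via-compactness argument: a set bounded in the "Aubin--Lions-type" space of Lemma~\ref{compactLp} is relatively compact in $L^2(0,T;\bH^\alpha(\sO))$, so the laws, being concentrated (up to arbitrarily small mass) on such compact sets by Chebyshev, are tight. The two ingredients needed are (i) a uniform-in-$N$ (and in $\ep$) bound on $\bu^+_N$ in $L^2(\Omega;L^2(0,T;V))$, which is exactly Lemma~\ref{bounds}(4), and similarly the bound on $v^+_N$ in $L^2(\Omega;L^2(0,T;H^{1/2}(0,L)))$ from Lemma~\ref{bounds}(5); and (ii) a uniform bound on a fractional-in-time translation seminorm $\sup_{0<h<T} h^{-m}\|T_h \bu^+_N - \bu^+_N\|_{L^2(h,T;\sY_1)}$ for some $m>0$ and some large space $\sY_1$ (e.g. $\sY_1 = V'$ or $(\bH^1)'$ paired with the structure dual), in expectation.

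For ingredient (ii) I would test the semidiscrete fluid equation \eqref{second} with a fixed (time-independent) test function $\bQ=(\bq,\psi)\in\sU$ and sum over the time steps falling in an interval of length $h$, so that the increments $\bu^{n+1}-\bu^{n+\frac12}$, the jump in the structure part, the viscous term, the advection term, the penalty term, the pressure-data term, and the stochastic increment $G(\bU^n,\eta^n_*)\Delta_n W$ each get estimated. The deterministic terms are controlled by the energy bounds of Theorem~\ref{energythm} and Lemma~\ref{bounds}, producing a bound of order $h$ (so $m=1$ for those) after dividing; the delicate term is the sum of stochastic increments $\sum_n (G(\bU^n,\eta^n_*)\Delta_n W,\bQ)$, which is a martingale increment. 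Using the Itô isometry / Burkholder--Davis--Gundy inequality together with the growth bound \eqref{growthG} and the energy estimates, $\bE\|\sum_n G(\bU^n,\eta^n_*)\Delta_n W\|^2$ over a window of $\lfloor h/\Delta t\rfloor$ steps is of order $h$, which gives a uniform bound on the seminorm with exponent $m=\tfrac12$ (or slightly less). Hence for any $m<\tfrac12$ one obtains $\bE\big[\sup_{0<h<T} h^{-m}\|T_h\bu^+_N-\bu^+_N\|_{L^2(h,T;V')}^2\big]\le C$, uniformly in $N$ and $\ep$. The same computation, restricted to the structure component (test with $(\0,\psi)$, or rather using the coupled test space), yields the analogous bound for $v^+_N$ in a dual structure norm.

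With (i) and (ii) in hand, apply Lemma~\ref{compactLp} with $(\sY_0,\sY,\sY_1) = (V,\bH^\alpha(\sO),V')$ for the fluid (valid since the embedding $V\hookrightarrow\bH^\alpha(\sO)$ is compact for $\alpha<1$ and $V\hookrightarrow V'$ is continuous) and with $(H^{1/2}(0,L),L^2(0,L),(\,\cdot\,)')$ for the structure velocity, to conclude that the relevant "Aubin--Lions ball"
$$
B_K=\Big\{\bu\in L^2(0,T;V):\ \|\bu\|_{L^2(0,T;V)}^2+\sup_{0<h<T}\tfrac{1}{h^{2m}}\|T_h\bu-\bu\|_{L^2(h,T;V')}^2\le K\Big\}
$$
is compact in $L^2(0,T;\bH^\alpha(\sO))$. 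For $\gamma>0$ choose $K=K(\gamma)$ via Chebyshev so that $\bP(\bu^+_N\notin B_K)\le C/K\le\gamma$ uniformly in $N,\ep$; then the laws of $\bu^+_N$ are tight on $L^2(0,T;\bH^\alpha(\sO))$, and identically for $v^+_N$ on $L^2(0,T;L^2(0,L))$.

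The main obstacle I expect is the fractional-in-time estimate (ii), specifically handling the stochastic term and the advection term uniformly in $\ep$: the penalty term $\frac{1}{\ep}\mathrm{div}^{\eta^n_*}\bu^{n+1}$ appears with a bad prefactor, so one must pair it only against test functions $\bq$ with $\mathrm{div}^{\eta^n_*}\bq$ small or use the uniform bound on $\frac1{\sqrt\ep}\mathrm{div}^{\eta^*_N}\bu^+_N$ from Lemma~\ref{bounds}(6) together with a Cauchy--Schwarz that leaves only a $\frac1{\sqrt\ep}$-bounded quantity times $\sqrt\ep\|\mathrm{div}^{\eta^n_*}\bq\|$ — i.e. one should choose the window test function cleverly, or accept the pairing against $V'$ where the $\mathrm{div}$ term is handled by the $\ep$-uniform bound of Lemma~\ref{bounds}(6). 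Getting the right functional setting so that every term, and in particular the nonlinear advection $b^{\eta^n_*}$ and the penalty term, is controlled by the already-established $N$- and $\ep$-uniform energy bounds is the crux; once the correct dual space $\sY_1$ is identified the estimates themselves are routine.
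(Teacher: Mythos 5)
Your high-level skeleton (uniform $L^2(\Omega;L^2(0,T;V))$ bounds plus a fractional time-translation estimate, then Lemma \ref{compactLp} and Chebyshev) is indeed the paper's skeleton, but the step you yourself call the crux — ingredient (ii) — is where the proposal breaks down, and it is precisely the content of the paper's proof. First, testing \eqref{second} with an arbitrary (or fixed) $\bQ\in\sU$ and estimating increments in a dual norm does not give an $\ep$-uniform bound: the penalty term is $\frac1\ep\int\text{div}^{\eta^n_*}\bu^{n+1}\,\text{div}^{\eta^n_*}\bq$, and Lemma \ref{bounds}(6) only controls $\frac1{\sqrt\ep}\|\text{div}^{\eta^*_N}\bu^+_N\|_{L^2}$, so Cauchy--Schwarz leaves $\frac1{\sqrt\ep}\bigl(\frac1{\sqrt\ep}\|\text{div}^{\eta^n_*}\bu^{n+1}\|\bigr)\|\text{div}^{\eta^n_*}\bq\|$, which blows up as $\ep\to0$ unless $\text{div}^{\eta^n_*}\bq$ is itself of size $O(\sqrt\ep)$ (or comparable to the solution's divergence). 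This is exactly why the paper does not use generic test functions: it tests the $n$-th equation with transforms of the earlier velocities $\bu^k$ themselves, built by extending to $\sO_\delta$, vertically squeezing with a parameter $\sigma$ with $\sigma-1\sim h^{1/4}$, mollifying with $\lambda\sim h^{1/4}$, and pulling back by $A^\omega_{\eta^n_*}$, so that the kinematic boundary condition holds on the $n$-th (random) domain and, crucially, \eqref{kton} gives $\|\text{div}^{\eta^n_*}\bu^{k,n}_{\sigma,\lambda}\|\le C\sigma\|\text{div}^{\eta^{k-1}_*}\bu^{k}\|$, so the penalty term is absorbed by the $\ep$-uniform dissipation bound of Theorem \ref{energythm}(2). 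This construction (and the quantitative choice of $\sigma,\lambda$ in terms of $h$, which produces the powers of $h$) is the missing idea in your proposal; "choose $\bq$ with small transformed divergence" is stated but not achieved, and achieving it on mismatched random domains while keeping the coupling condition is the hard part.

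Second, your target estimate $\bE\bigl[\sup_{0<h<T}h^{-m}\|T_h\bu^+_N-\bu^+_N\|^2_{L^2(h,T;\sY_1)}\bigr]\le C$ is not reachable from the available bounds: the estimates of the individual terms produce products of random quantities such as $\max_n\|v^{n+\frac12}\|_{L^2}\cdot\sum_n\Delta t\|\bu^n\|^2_{\bH^1}$ or $\|\tilde\eta^*_N\|_{C^{0,1/4}}\cdot\max_n\|\bu^n\|\cdot(\sum\Delta t\|\bu^n\|^2_{\bH^1})^{1/2}$, whose expectations require moments higher than the second moments supplied by Theorem \ref{energythm}. The paper deliberately avoids this (this is one of its advertised novelties) by never putting these products under a single expectation: it uses the splitting \eqref{property} to bound $\bP(|I_i|>M)$ by sums of Chebyshev-type terms, each needing only second moments, and only concludes $\bP((\bu^+_N,v^+_N)\notin\sB_M)\le C/\sqrt M$, which is enough for tightness. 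Related, but less central, issues: the equation controls increments of the weighted quantity $(R+\eta^*_N)\bu_N$ (plus the $(\eta^{n+1}_*-\eta^n_*)\bu^{n+1}$ term), not of $\bu_N$ directly, so a dual-norm route needs an extra argument to remove the random weight; and the paper's mechanism actually yields $L^2$-in-space translation bounds (via the telescoping identity with the solution as test function), not dual-norm bounds, which is what lets it take $\mathcal Y_1=\bL^2(\sO)\times L^2(0,L)$ in Lemma \ref{compactLp}. So the proposal, as written, has a genuine gap at its central step.
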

\begin{proof} 
 The aim of this proof is to apply Lemma \ref{compactLp} 
 by obtaining appropriate bounds for
 \begin{equation}\label{TimeShifts}
 \int_h^{T} \|T_h\bu_N-\bu_N\|^2_{\bL^2(\sO)}+\|T_hv_N-v_N\|^2_{L^2(0,L)}=	(\Delta t)\sum_{n=j}^N \|\bu^n-\bu^{n-j}\|^2_{\bL^2(\sO)} + \|v^n-v^{n-j}\|^2_{L^2(0,L)},
\end{equation}
	{{which are given in terms of powers of $h$}}, for any $h>0$, {independently of $N$. For a fixed $N$, we write $h=j\Delta t-s$ for some $1\leq j \leq N$ and  $s<\Delta t$}.
	{{More precisely, for $0\leq \alpha<1$ and any $M>0$ let us introduce the set:
	\begin{align*}
	{\mathcal{B}}_M:=&\{(\bu,v)\in L^2(0,T;\bH^\alpha(\sO))\times L^2(0,T;L^2(0,L)):
	\|{\bu}\|^2_{L^2(0,T;\bH^1(\sO))}+\|{v}\|^2_{L^2(0,T;H^{\frac12}(0,L))}
	\\
	&+\sup_{0<h<1}{h^{-\frac1{32}}}\int_h^{T}\left( \|T_h\bu-\bu\|^2_{\bL^2(\sO)}+\|T_hv-v\|^2_{L^2(0,L)}\right)  \le M\}.
\end{align*}
Notice that  Lemma \ref{compactLp}, implies that $\sB_M$ is compact in $L^2(0,T;\bH^\alpha(\sO))\times L^2(0,T;L^2(0,L))$ for each $M>0$.
Here in Lemma \ref{compactLp} we used $\mathcal{Y}_0 = \bH^1(\sO) \times H^{\frac12}(0,L)$,
$\mathcal{Y} = \bH^\alpha(\sO) \times L^2(0,L)$, and $\mathcal{Y}_1 = \bL^2(\sO) \times L^2(0,L)$.
To show tightness of laws of $\bu^+_N$ and $v^+_N$, we will then show that there exists $C>0$ independent of $N$ and $\ep$, such that
	\begin{align*}
	\bP((\bu^+_N,v^+_N) \notin \sB_M)&\leq \bP\left( \|{\bu^+_N}\|^2_{L^2(0,T;\bH^1(\sO))}+\|{v^+_N}\|^2_{L^2(0,T;H^{\frac12}(0,L))}>\frac{M}2\right) \\
	&+\bP\left( \sup_{0 <h<1 }{h^{-\frac1{32}}}\int_h^{T}\left( \|T_h\bu^+_N-\bu^+_N\|^2_{\bL^2(\sO)}+\|T_hv^+_N-v^+_N\|^2_{L^2(0,L)}\right)  > \frac{M}2\right) \\
	&\leq \frac{C}{\sqrt{M}}.
\end{align*}
}}
	
	To achieve this goal, we will construct appropriate test functions for equations \eqref{first} and \eqref{second} that will give the term on the right hand side of the equation above. This has to be done carefully since the solutions to the fluid subproblem \eqref{second} are defined on different physical domains. 
	That is, the velocity functions $\bu^k$, for any { $1 \leq k\leq N$}, cannot directly be used as test functions for the equation for $\bu^n$, { for $1 \leq n \neq k\leq N$}. 
	Hence, we must construct the test functions by first transforming $\bu^k$, such that appropriate boundary conditions are satisfied by these test functions and such that they have "small" divergence so that the penalty term can be bounded independently of $\ep$. This is achieved by first "squeezing" $\bu^k$ appropriately (see also \cite{CDEG}, \cite{G08}, \cite{MC19}, \cite{CanicLectureNotes}). However, using the squeezed function directly as a test function would result into testing $n^{th}$ step equation \eqref{first}$_3$ with $v^k$, which does have the required $H^2$ spatial regularity. Thus, we mollify the squeezed function to get the desired $H^2$ regularity.
	 
	Hence our plan is as follows: assume that, { for any $\delta>0$ used to define our stopped process in \eqref{eta*}}, $\sO_\delta$ 
	is the maximal rectangular domain  consisting of all the fluid domains associated with the structure displacement $\eta^*_N$, { defined in \eqref{AppSol}}, for any $N$ and $t,\omega$. We fix an $N$ and $\omega\in\Omega$, and for any $0\leq k,n \leq N$ we push $\bu^k$ onto the physical domain $\sO_{\eta^{k-1}_*}$, extend it by its trace outside of $\sO_{\eta^{k-1}_*}$ to $\sO_\delta$, then "squeeze" it in a way that {the $L^2$-bound on} its divergence is preserved, and mollify it. {The mollification is done in a way that preserves the zero boundary condition for the radial component of the velocity on the left and right boundaries 
	$\Gamma_{in/out}$, and the zero boundary value for the horizontal velocity component on the {fluid-structure interface $\Gamma_{\eta^n_*}$.}} Then we pull it back to the domain $\sO$  via the  ALE map $A^\omega_{{\eta^n_*}}$. This way we have transformed $\bu^k$ so that it can be used as a test function for the equations \eqref{second} for $\bu^n$. Details of the construction are presented next.

	First, let $ \tilde\bu_{}^k=\bu^k\circ A^{-1}_{{\eta^{k-1}_*}}$. Here we used ``tilde'' to denote the function defined on the physical domain. {For the purposes of mollification, we define $\tilde\bu^{k,ext}$ to be an extension of $\tilde\bu^k$ to the whole of $\mathbb{R}^2$, as follows:}. 
	%
	\begin{equation*}
	\tilde\bu^{k,ext} = \left\{ 
	\begin{array}{ll}
	  (\tilde u^k_z(0,r),0)\quad &{\text{for}} \ z \le 0,\\
	  (\tilde u^k_z(L,r),0)           &{\text{for}} \ z \le L,\\
	  (0,v^k)                             &{\text{above}} \ \Gamma_{\eta^{k-1}_*},\\
	   0                                     &{\text{everywhere \ else.}}
	\end{array}
	 \right.
	\end{equation*} 
	{{Then, we introduce the squeezing parameter $\sigma > 1$ and the following squeezing operator, also denoted by $\sigma$ (with a slight abuse of notation):}}
	 \begin{align*}
	 	\tilde \bu^{k}_\sigma(z,r)&:= \tilde\bu^k_{ext}\circ \sigma = (\sigma\tilde u_z^{k,ext}\left(z,\sigma r \right),\tilde u_r^{k,ext}\left(z,\sigma r \right)). 
	 \end{align*}
As mentioned earlier, notice that we scaled the $r$ coordinate i.e. squeezed it vertically, so that $\tilde\bu^{k,ext}$ assumes appropriate values on $\Gamma_{\eta^n_*}$ in order for it to be used as a test function for the $n^{th}$ equations. { To improve the spatial regularity of its trace on $\Gamma_{\eta^n_*}$}, we next introduce $\tilde\bu^k_{\sigma,\lambda}$, a space mollification of $\tilde\bu^k_{\sigma}$, where $\lambda$ denotes the space mollification parameter. The mollification is obtained using standard 2D mollifiers. 
{{Based on this space mollification $\tilde\bu^k_{\sigma,\lambda}$, for any given $N$ and  $n$ such that $1\le n \le N,$ we define 	
	 \begin{align}
	 	\tilde\bu^{k,n}_{\sigma,\lambda}=\tilde\bu^k_{\sigma,\lambda}\circ A^\omega_{\eta^n_*},
	 \end{align}
}}%
{{where the parameters $\lambda>0$ and $\sigma$  are chosen as follows:
\begin{enumerate}
\item  The random variable $\sigma$ is defined so that the squeezed $k^{th}$ fluid velocity function lies entirely within the $n^{th}$ fluid domain, see Fig.~\ref{fig2}:
 $$\sigma=\max_{1\leq n\leq N}\max_{n-j\leq k\leq n}\sup_{z\in[0,L]}\frac{R+\eta^k_*}{R+\eta^n_*-\delta h^{\frac14}};$$

\item The parameter $\lambda$ is then chosen to be small enough so that mollification with parameter $\lambda$ preserves the kinematic coupling condition, see Fig. \ref{fig2}:
\begin{equation}\label{lambda}
 \lambda=\frac{ \delta h^{\frac14}}{\sqrt{1+\delta^2}}.
 \end{equation}
\end{enumerate}
}}
 \begin{figure}[h]
	\includegraphics[scale=0.9]{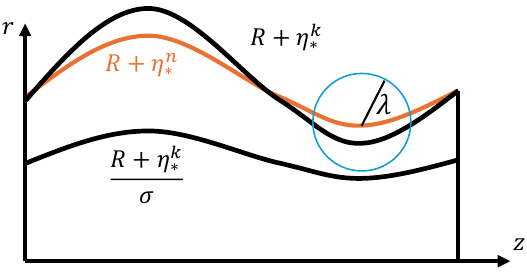}
	\caption{A realization of the vertical squeezing and the mollification operator.}\label{fig2}
\end{figure}
Due to these choices of $\sigma$ and $\lambda$ we will be able to obtain the desired estimates in terms of the powers of the translation parameter $h$,
as we shall see below.
{In particular, we observe that for this choice of $\sigma$, the vertical distance between $R+\eta^n_*$ and $\frac{R+{\eta^k_*}}\sigma$ is at least $\delta h^{\frac14}$. Moreover, thanks to the mean value theorem and the property of the { artificial structure variables $\eta^n_*$},  that
 $\|\partial_z\eta^n_*\|_{L^\infty(0,L)} \leq \frac1{\delta}$ for any $n \leq N$, the distance between these two curves is at least $\frac{ \delta h^{\frac14}}{\sqrt{1+(\delta\sigma)^{-2}}}$, which motivates our choice of $\lambda$, see Fig. \ref{fig3}. }
\begin{figure}[h]
	\includegraphics[scale=0.5]{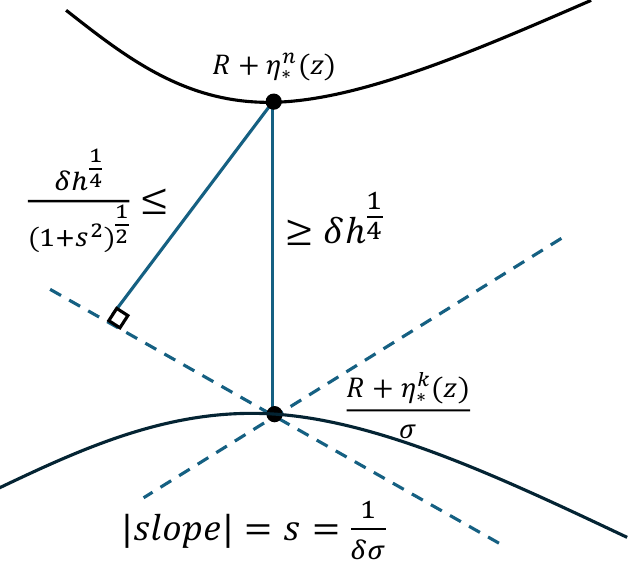}
	\caption{{{Distance between the curves $R+\eta^n_*$ and $\frac{R+{\eta^k_*}}\sigma$}}.}\label{fig3}
\end{figure}

{{{\bf{Remark.}}  Before we continue,}} { we note here that we additionally need to "squeeze" the function $\tilde\bu^{k,ext}$ horizontally so that {{when we mollify the function to obtain the desired regularity}}, the mollification does not ruin its 0 boundary value for the radial component at the boundaries $\Gamma_{in/out}$ and, for the same reasons, vertically near the bottom boundary $\Gamma_b$ as well. This can be done using the same argument, i.e. by choosing the horizontal squeezing parameter, {{say $\sigma_{horizontal}$,}} and {{the mollification parameter}} $\lambda$ appropriately small. The resulting function is in $\bH^s(\sO_\delta)$ for any $s<\frac12$ and thus the  estimates {{presented below}} can be obtained the same way. However, we choose to leave {{this part of the construction}}  out of our discussion for a cleaner presentation.}

{{We now}} define the structure equation test function $v^{k}_{\lambda}$ to be the space mollification of  $v^{k,ext}$ where $ v^{k,ext}$ is the extension of $v^k$ { to $\mathbb{R}$} by 0. 
Observe that $\partial_r v^{k}_{\lambda}=(\partial_r v^{k,ext})_{\lambda}=0$. 

Hence, we have that for any $n-j\leq k< n$, the transformed fluid velocity $\bu^{k,n}_{\sigma,\lambda}$ satisfies: 
\begin{align}
\label{kton}\|\text{div}^{\eta^n_*}\bu^{k+1,n}_{\sigma,\lambda}\|_{L^2(\sO)}\leq {C(\delta)}{\sigma}\|\text{div}^{\eta^k_*}\bu^{k+1}\|_{L^2(\sO)},\quad \text{ and} \quad \bu^{k,n}_{\sigma,\lambda}|_\Gamma=(0, v^{k}_{\lambda}).
\end{align}

{{Recall that our goal is to use the modified fluid velocity and structure displacements constructed above as test functions for our coupled problem
 \eqref{first} and \eqref{second}, based on which we will obtain the desired estimates of the right hand-side of \eqref{TimeShifts} in terms of power of $h$. For this purpose, we will need the following estimate of $\sigma$ in terms of powers of $h$: For any $j\leq n \leq N$ observe that we have
{\begin{align}
\notag\sigma-1 &= \max_{1\le n \le N}\max_{n-j\leq k\leq n}\sup_{z\in[0,L]}\frac{R+\eta^k_*}{R+\eta^n_*-\delta h^{\frac14}} -1=   \max_{1\le n \le N}\max_{n-j\leq k\leq n}\sup_{z\in[0,L]}\frac{\eta^k_*-\eta^n_*+\delta h^{\frac14}}{R+\eta^n_*-\delta h^{\frac14}} \\
&\leq  \max_{1\le n \le N}\max_{n-j\leq k\leq n} \frac{\sup_{z\in[0,L]}|\eta^n_*-\eta^k_*|+\delta h^{\frac14}}{\delta} \leq C(\frac1\delta\|\tilde\eta^*_N\|_{C^{0,\frac14}(0,T;L^\infty(0,L))}+1)h^{\frac14},\label{boundsigma}
\end{align}}
where we have used $\|T_h\eta^*_N-\eta^*_N\|_{L^\infty((0,T)\times(0,L))}\leq \|\tilde\eta^*_N\|_{C^{0,\frac14}(0,T;L^\infty(0,L))} h^\frac14$ in the last inequality. }}


{{{{In the estimates below we will also need the estimates on the}}  difference between the transformed and the original fluid {{velocity}} functions. For this purpose, we let $\rho$ be the 2D bump function.}}
Then, for any $k \leq N$, $s<\frac12$ we find:
\begin{align}
\notag	\|\tilde\bu^k_{\sigma,\lambda}&-\tilde\bu^{k,ext}\|^2_{\bL^2(\sO_\delta)}=\int_{\sO_\delta}\Big|\int_{B(0,1)}\rho(y)(\tilde\bu^k_\sigma(x+\lambda y)-\tilde\bu^{k,ext}(x)) dy\Big|^2 dx\\
\notag	&=  \int_{\sO_\delta}\Big|\int_{B(0,1)}\rho(y)(\tilde\bu^{k,ext}(\sigma(x+\lambda y))-\tilde\bu^{k,ext}(x)) dy\Big|^2 dx\\
\notag	&\leq  \int_{\sO_\delta}\int_{B(0,1)}\frac{|(\tilde\bu^{k,ext}(\sigma(x+\lambda y))-\tilde\bu^{k,ext}(x))|^{2+2s}}{|\sigma(x+\lambda y)-x|^2} {|\sigma(x+\lambda y)-x|^{2+2s}} dy dx\\
\notag	&\leq C(s,\delta,L)((\sigma-1)^{2+2s}+(\sigma\lambda)^{2+2s}) \int_{\sO_\delta}\int_{B(0,1)}\frac{|(\tilde\bu^{k,ext}(\sigma(x+\lambda y))-\tilde\bu^{k,ext}(x))|^2}{|\sigma(x+\lambda y)-x|^{2+2s}} dy dx.
\end{align}
{{After the change of variables $w=\sigma(x+\lambda y)$ we obtain:}}
\begin{align}
\notag\|\tilde\bu^k_{\sigma,\lambda}-\tilde\bu^{k,ext}\|^2_{\bL^2(\sO_\delta)} &\leq C(s,\delta,L)\left( (\sigma-1)^{2+2s}\sigma^{2+2s}\right) \sigma^{-1}\lambda^{-2} \int_{\sO_\delta}\int_{\sO_\delta^{\sigma,\lambda}}\frac{|(\tilde\bu^{k,ext}(w)-\tilde\bu^{k,ext}(x))|^2}{|w-x|^{2+2s}} dw dx\\
&	\leq C(s,\delta,L)\left( \frac{(\sigma-1)^{2+2s}}{\lambda^{2}}\sigma^{1+2s}\right) \|\tilde\bu^{k,ext}\|^2_{\bH^s(\sO^{\sigma,\lambda}_\delta)}\label{u1},
\end{align}
where $\sO_\delta^{\sigma,\lambda}=\sigma(\sO_\delta)+\lambda B(0,1)$. 
{
 For $s<\frac12$, we can estimate the $H^s$ norm of $\tilde\bu^{k,ext}$ on $\sO_\delta^{\sigma,\lambda}$ as follows:}
$$
\|\tilde\bu^{k,ext}\|^2_{\bH^s(\sO_\delta^{\sigma,\lambda})} 
\leq C( \|\bu^k\|^2_{\bH^1(\sO)} +\|v^k\|^2_{H^\frac12(0,L)}),
$$ 
where $C$ depends only on $\delta$ (see \cite{Mi11}).
{ Thus, for any $n-j\leq k\leq n$ and $s<\frac12$, our choice of $\sigma$ that satisfies \eqref{boundsigma} gives us,}
\begin{align}
\notag	&\|\bu^{k,n}_{\sigma,\lambda}-\bu^{k}\|_{\bL^2(\sO)} \leq \|\tilde\bu^{k}_{\sigma,\lambda}-\tilde\bu^{k,ext}\|_{\bL^2(\sO_{\eta^n_*})}+\|\tilde\bu^{k,ext}\circ(A_{\eta^n_*}-A_{\eta^{k-1}_*})\|_{\bL^2(\sO)}\\
\notag	&\qquad \leq  \left( \frac{(\sigma-1)^{s+1}}{\lambda}\sigma^{s+\frac12} +\lambda^s\right) \|\bu^{k}\|_{\bH^1(\sO)}
	+\|\partial_r\tilde\bu^{k,ext}\|_{\bL^2(\sO_\delta)}\|\eta^n_*-\eta^{k-1}_*\|_{L^\infty(0,L)}\\
	&\qquad \leq \left( \frac{(\sigma-1)^{s+1}}{\lambda}\sigma^{s+\frac12}+\lambda^s\right)  \|\bu^{k}\|_{\bH^1(\sO)}
	+h^{\frac14}\|\tilde\eta^*_N\|_{C^{0,\frac14}(0,T;L^\infty(0,L))}\|\bu^{k}\|_{\bH^1(\sO)}\notag\\
	& \leq h^{\frac{s}4}\left( \max\{1,\frac1{\delta}\|\tilde\eta^*_N\|^{2s+\frac32}_{C^{0,\frac14}(0,T;L^\infty(0,L))}\}\right)  \|\bu^{k}\|_{\bH^1(\sO)}
	+h^{\frac14}\|\tilde\eta^*_N\|_{C^{0,\frac14}(0,T;L^\infty(0,L))}\|\bu^{k}\|_{\bH^1(\sO)}. \label{u2}
\end{align}
{{This estimate of the difference $\|\bu^{k,n}_{\sigma,\lambda}-\bu^{k}\|_{\bL^2(\sO)}$ given in terms of powers of $h$ will be used below in \eqref{I2} to estimate 
one of the integrals contributing to the final estimate of the time-shifts of $\bu_N$.}}

{ Likewise, the following property of mollification along with \eqref{lambda}, will be used in \eqref{I3}:}
\begin{align}\label{v1}
	\|v^{k}_{\lambda}-v^k\|_{L^2(0,L)} \leq \lambda^{s} \| v^{k}\|_{H^s(0,L)} \leq h^{\frac{s}{4}} \| v^{k}\|_{H^s(0,L)}
	,\quad \text{ for any } s\leq\frac12.
\end{align}

{{We are now ready to multiply \eqref{first}$_3$ and \eqref{second}$_2$ by the test functions $\bq_n$ and $\psi_n$, respectively, (see also \cite{CDEG}),
where $\bq_n$ and $\psi_n$ are defined by
$$
\bQ_n:=(\bq_n,\psi_n)=\left( (\Delta t) \sum_{k=n-j+1}^n\bu^{k,n}_{\sigma,\lambda},\,\, (\Delta t) \sum_{k=n-j+1}^nv^{k}_{\lambda}\right). 
$$
After adding the resulting equations we obtain:
}}
		 \begin{align}\label{MainWeakForm}	 	
	 	&\int_{\sO}\left((R+\eta^{n+1}_*) \bu^{n+1}-(R+\eta^{n}_*)\bu^{n}\right) \left( \Delta t \sum_{k=n-j+1}^n\bu^{k,n}_{\sigma,\lambda}\right)    
	 	+\int_0^L(v^{n+1}-v^{n} )\left( \Delta t \sum_{k=n-j+1}^nv^{k}_{\lambda}\right) \\
	 	&-\frac{1}2\int_\sO \left( \eta_*^{n+1}-\eta_*^n\right)  \bu^{n+1}\cdot \left( \Delta t \sum_{k=n-j+1}^n\bu^{k,n}_{\sigma,\lambda}\right)  + \frac{(\Delta t)}{\ep}\int_\sO 
	 	\text{div}^{\eta^n_*}\bu^{n+1}\text{div}^{\eta^n_*}\left( \Delta t \sum_{k=n-j+1}^n\bu^{k,n}_{\sigma,\lambda}\right)  \notag
	\end{align}\begin{align}
	 	&+(\Delta t) b^{\eta_n^*}(\bu^{n+1},v^{n+1}r\be_r,\left( \Delta t \sum_{k=n-j+1}^n\bu^{k,n}_{\sigma,\lambda}\right) )
		\nonumber
		\\
	 	&+2\nu(\Delta t)\int_{\sO}(R+\eta^n_*) \bD^{\eta_*^{n}}(\bu^{n+1})\cdot \bD^{\eta_*^{n}}\left( \Delta t \sum_{k=n-j+1}^n\bu^{k,n}_{\sigma,\lambda}\right) 
		\nonumber
		\\
	 	 	&+(\Delta t)\int_0^L \partial_{z}\eta^{n+\frac12}  \partial_{z}\left( \Delta t \sum_{k=n-j+1}^nv^{k}_{\lambda}\right)  + \partial_{zz}\eta^{n+\frac12}  \partial_{zz}\left( \Delta t \sum_{k=n-j+1}^nv^{k}_{\lambda}\right)
			\nonumber
			\\
	 	&= (\Delta t)\left( P^n_{{in}}\int_{0}^1q^n_z\Big|_{z=0}dr-P^n_{{out}}\int_{0}^1q^n_z\Big|_{z=1}dr\right) +  
	 	(G(\bU^{n},\eta_*^n)\Delta_n W, \bQ_n).\notag
	 \end{align}
{{We will be estimating the terms in \eqref{MainWeakForm} to get the desired estimate for the time shifts.
We start by focusing on the first two terms. Applying summation by parts to the first two terms we obtain:}}
\begin{align*}
&-\sum_{n=0}^N\left( 	\int_{\sO}\left((R+\eta^{n+1}_*) \bu^{n+1}-(R+\eta^{n}_*)\bu^{n}\right) \left( \Delta t \sum_{k=n-j+1}^n\bu^{k,n}_{\sigma,\lambda}\right)
	+\int_0^L(v^{n+1}-v^{n} )\left( \Delta t \sum_{k=n-j+1}^nv^{k}_{\lambda}\right)\right) \\
&=(\Delta t)\sum_{n=1}^N\left( \int_{\sO}(R+\eta^{n}_*) \bu^{n}(\bu^{n,n}_{\sigma,\lambda}-\bu_{\sigma,\lambda}^{n-j,n})d\bx+\int_0^L v^n(v_{\lambda}^n-v_{\lambda}^{n-j})dz\right)\\
&- \int_\sO (R+\eta^{N+1}_*)\bu^{N+1} \left( \Delta t \sum_{k=N-j+1}^N\bu^{k,N}_{\sigma,\lambda}\right)-\int_0^L v^N \left( \Delta t \sum_{k=N-j+1}^Nv^{k}_{\lambda}\right).
\end{align*}
Here the first term on the right side can be written as 
\begin{align}\label{MainExpression}
&(\Delta t)\sum_{n=0}^N\left(
\int_{\sO}(R+\eta^{n}_*) \bu^{n}(\bu^{n}-\bu^{n-j})d\bx+ \int_{\sO}(R+\eta^{n}_*) \bu^{n}(\bu^{n,n}_{\sigma,\lambda}-\bu^{n}-(\bu_{\sigma,\lambda}^{n-j,n}-\bu^{n-j}))d\bx\right)\\
&+(\Delta t)\sum_{n=0}^N\int_0^L v^n(v^n-v^{n-j})dz+\int_0^L v^n(v^n_{\lambda}-v^n-(v^{n-j}_{\lambda}-v^{n-j}))
\nonumber\\
&=(\Delta t)\sum_{n=0}^N\left(
\frac12\int_{\sO}(R+\eta^{n}_*) (|\bu^{n}|^2-|\bu^{n-j}|^2+|\bu^{n}-\bu^{n-j}|^2)d\bx\right)
\nonumber\\
&+(\Delta t)\sum_{n=0}^N \left( \int_{\sO}(R+\eta^{n}_*) \bu^{n}(\bu^{n,n}_{\sigma,\lambda}-\bu^{n}-(\bu_{\sigma,\lambda}^{n-j,n}-\bu^{n-j}))d\bx\right)\notag\\
&+(\Delta t)\sum_{n=0}^N\frac12\int_0^L (|v^n|^2-|v^{n-j}|^2+|v^n-v^{n-j}|^2)dz+\int_0^L v^n(v^n_{\lambda}-v^n-(v^{n-j}_{\lambda}-v^{n-j})),\notag
\end{align}	 
where we set $\bu^n=0$ and $v^n=0$ for $n<0$ and $n>N$.
{{Notice that the terms on the right hand-side are written in terms of time-shifts and will be used to obtain the desired estimate.}}

 {{To estimate the terms on the right hand-side of \eqref{MainExpression} we}}
  observe that {{the terms containing $|\bu^{n}|^2-|\bu^{n-j}|^2$ can be estimated as follows:}}
 \begin{align*}
I_1&:=(\Delta t)\sum_{n=0}^N\left(
	\frac12\int_{\sO}(R+\eta^{n}_*) (|\bu^{n}|^2-|\bu^{n-j}|^2)d\bx\right)\\
	&=	\frac12(\Delta t)\left( \sum_{n=N-j+1}^N\int_{\sO}(R+\eta^{n}_*)|\bu^n|^2 d\bx	+\sum_{n=0}^{N-j}\int_{\sO}(\eta^{n}_*-\eta^{n+j}_*) |\bu^{n}|^2d\bx\right) \\
		&=	\frac12(\Delta t)\left( \sum_{n=N-j+1}^N\int_{\sO}(R+\eta^{n}_*)|\bu^n|^2 d\bx	+\sum_{n=0}^{N-j}\sum_{k=n}^{n+j-1}\int_{\sO}[(\Delta t)\theta_\delta(\eta^{n+1})v^{k+\frac12} ]|\bu^{n}|^2d\bx\right) \\
	&\leq	\frac12(\Delta t)\left( \sum_{n=N-j+1}^N\int_{\sO}(R+\eta^{n}_*)|\bu^n|^2 d\bx	+(\Delta t)\sum_{n=0}^{N-j}\sum_{k=n}^{n+j-1}\|v^{k+\frac12}\|_{L^2(0,L)} \|\bu^{n}\|^2_{\bL^4(\sO)}\right) \\
	&\leq  h \left( \max_{1\leq n \leq N}\int_{\sO}(R+\eta^{n}_*)|\bu^n|^2 d\bx+ \max_{0\leq k \leq N-1}\|v^{k+\frac12}\|_{L^2(0,L)}\sum_{n=0}^N(\Delta t)\|\bu^n\|^2_{\bH^1(\sO)} \right)  
\end{align*}
{{We now want to show that the probability $\bP\left(|I_1|>M \right)$ for any $M>0$ is bounded by an expression which {depends on} $h^m$ for $m > 0$ and 
$M^{-1}$.}} 
{{For this purpose, we will}} use the fact that for any two positive random variables $A$ and $B$, {{we have that}} 
\begin{equation}\label{property}
\{\omega: A+B>M\} \subseteq \{\omega: A>\frac{M}2\}\cup \{\omega: B>\frac{M}2\}.
\end{equation}
{{We will use this property repeatedly throughout the rest of this proof.}} Property \eqref{property}  implies that
\begin{align*}
	\bP(|A|+|B|>M) \leq \bP(|A|>\frac{M}2)+\bP(|B|>\frac{M}2).
	\shortintertext{Similarly we also have,}
		\bP(|AB|>M) \leq \bP(|A|>\sqrt{M})+\bP(|B|>\sqrt{M}).
\end{align*}
Hence,
\begin{align*}
	\bP&\left(|I_1|>M \right) \leq \bP\left( h\max_{1\leq n \leq N}\int_{\sO}(R+\eta^{n}_*)|\bu^n|^2 d\bx \geq \frac{M}2\right) \\
	&+ \bP\left( h\max_{0\leq k \leq N-1}\|v^{k+\frac12}\|_{L^2(0,L)}\sum_{n=0}^N(\Delta t)\|\bu^n\|^2_{\bH^1(\sO)}\geq \frac{M}2 \right)\\
	&\leq \bP\left( h\max_{1\leq n \leq N}\int_{\sO}(R+\eta^{n}_*)|\bu^n|^2 d\bx \geq \frac{M}2\right) \\
	&+ \bP\left( \sqrt{h}\max_{0\leq k \leq N-1}\|v^{k+\frac12}\|_{L^2(0,L)} \geq \sqrt{\frac{M}2}\right) +\bP\left( \sqrt{h}\sum_{n=0}^N(\Delta t)\|\bu^n\|^2_{\bH^1(\sO)}\geq \sqrt{\frac{M}2} \right)\\
	&\leq \frac{\sqrt{h}}{\sqrt{M}}\bE\left( \max_{1\leq n \leq N}\|\bu^n\|^2_{\bL^2(\sO)}  +\max_{0\leq k \leq N-1}\|v^{k+\frac12}\|_{L^2(0,L)}+ \sum_{n=0}^N(\Delta t)\|\bu^n\|^2_{\bH^1(\sO)} \right) \leq C\frac{\sqrt{h}}{\sqrt{M}}.
\end{align*}
Next, {{we estimate the second term on the right hand-side of \eqref{MainExpression}.}} Thanks to \eqref{u2}, we see that for any $s<\frac12$ and {for some $C$ that depends only on $T$ and $\delta$}, we have
\begin{align}\label{I2}
I_2 &:=	|(\Delta t)\sum_{n=0}^N \left( \int_{\sO}(R+\eta^{n}_*) \bu^{n}(\bu^{n,n}_{\sigma,\lambda}-\bu^{n}-(\bu_{\sigma,\lambda}^{n-j,n}-\bu^{n-j}))d\bx\right)|
\nonumber
\\
	& \leq C(\Delta t)\sum_{n=0}^N\|\bu^n\|_{\bL^2(\sO)}(\|\bu^{n,n}_{\sigma,\lambda}-\bu^n\|_{\bL^2(\sO)}+\|\bu^{n-j,n}_{\sigma,\lambda}-\bu^{n-j}\|_{\bL^2(\sO)})
	\nonumber
	\\
	&\leq C  h^{\frac{s}4}  \max\{1,\frac1{\delta}\|\tilde\eta^*_N\|^{2s+\frac32}_{C^{0,\frac14}(0,T;L^\infty(0,L))}\}
  \max_{1\leq n\leq N} \|\bu^n\|_{\bL^2(\sO)} \left( (\Delta t)\sum_{n=1}^{N} \|\bu^{k}\|^2_{\bH^1(\sO)}\right)^{\frac12} .
\end{align}
Hence, we take $s=\frac14$ and use the embedding $W^{1,\infty}(0,T;L^2(0,L))\cap L^2(0,T;H^2_0(0,L)) \hookrightarrow C^{0,\frac14}(0,T;H^\frac32(0,L))\hookrightarrow C^{0,\frac14}(0,T;L^\infty(0,L))$ to obtain,
\begin{align*}
	\bP(|I_2|>M)& \leq
	\frac{h^{\frac1{32}}}{M^{\frac12}}\bE\left( \|\tilde\eta^*_N\|^2_{C^{0,\frac14}(0,T;L^\infty(0,L))}+ \max_{1\leq n\leq N}\|\bu^n\|^2_{\bL^2(\sO)}+ \left((\Delta t) \sum_{n=0}^N 
	\|\bu^{n}\|^2_{\bH^1(\sO)}\right)\right) \\&\leq C\frac{h^{\frac1{32}}}{M^{\frac12}}.
\end{align*}
{{Similarly, the third term on the right hand-side of \eqref{MainExpression} can be estimated using \eqref{v1} to obtain}} 
\begin{align}
I_3&:=	|(\Delta t)\sum_{n=0}^N \int_0^L v^n(v^n_{\lambda}-v^n-(v^{n-j}_{\lambda}-v^{n-j}))| \notag\\
&	\leq C(T) \lambda^{\frac12}   \max_{0\leq n\leq N}\|v^n\|_{L^2(0,L)}\left( (\Delta t)\sum_{n=0}^N \|v^n\|^2_{H^\frac12(0,L)}\right) ^{\frac12}.\label{I3}
\end{align}
Hence,
{{by our choice of $\lambda$, see \eqref{lambda} above, we know that $\lambda \sim h^{1/4}$, and so we get}}
\begin{align*}
\bP(|I_3|>M) \leq 	\frac{h^{\frac1{8}}}{M}\bE\left( \max_{0\leq n\leq N}\|v^n\|^2_{L^2(0,L)} + (\Delta t)\sum_{n=0}^N \|v^n\|^2_{H^\frac12(0,L)}\right) \leq C\frac{h^{\frac1{8}}}{M^{}}.
\end{align*}

{{Now we go back to \eqref{MainWeakForm} and estimate the penalty term.}}
Thanks to \eqref{kton}, we observe that
\begin{align*}
& I_4:= \left|	\frac{(\Delta t)}{\ep}\sum_{n=0}^{N}\int_\sO 
	\text{div}^{\eta^n_*}\bu^{n+1}\left( \Delta t \sum_{k=n-j+1}^n\text{div}^{\eta^n_*}(\bu^{k,n}_{\sigma,\lambda})\right) d\bx \right|  \\
	&\leq \frac{(\Delta t)}{{\ep}}\sum_{n=0}^{N}
\|	\text{div}^{\eta^n_*}\bu^{n+1}\|_{L^2(\sO)}\left( 	{(\Delta t)} \sum_{k=n-j+1}^n\|\text{div}^{\eta^n_*}(\bu^{k,n}_{\sigma,\lambda})\|^2_{L^2(\sO)}\right)^{\frac12} \sqrt{h} \\
&\leq C \sqrt{hT} \,\sigma \left(  	\frac{(\Delta t)}{{\ep}} \sum_{n=0}^N\|\text{div}^{\eta^{n}_*}(\bu^{n+1})\|^2_{L^2(\sO)}\right)\\
&\leq C\sqrt{hT}\left(  {1+\|\tilde\eta^*_N\|_{C^{0,\frac14}(0,T;L^\infty(0,L))}h^{\frac14}}\right) \left(  	\frac{(\Delta t)}{{\ep}} \sum_{n=0}^N\|\text{div}^{\eta^{n}_*}(\bu^{n+1})\|^2_{L^2(\sO)}\right) .
\end{align*}
Hence,
\begin{align*}
	\bP(I_4>M) \leq \frac{\sqrt{h}}{M}\bE[1+\|\tilde\eta^*_N\|^2_{C^{0,\frac14}(0,T;L^\infty(0,L))}h^{\frac14}]+ \frac{{h}^{\frac14}}{M^{\frac12}}\bE\left(  	\frac{(\Delta t)}{{\ep}} \sum_{n=0}^N\|\text{div}^{\eta^{n}_*}(\bu^{n+1})\|^2_{L^2(\sO)}\right) \leq C\frac{h^{\frac14}}{M^{\frac12}}.
\end{align*}
Notice that due to Theorem \ref{energythm} (2), the constant $C$ in the estimate above does not depend on $N$ or $\ep$. 

{{Next we estimate the term in \eqref{MainWeakForm} appearing after the penalty term.
For this purpose, notice that the continuity of}} 
the embedding
 $H^{\frac12}(0,L) \hookrightarrow L^p(0,L)$ for any $p<\infty$ {{implies}}
\begin{align}\label{boundl3}
	\|\tilde\bu^k_\sigma\| _{\bL^3(\sO_\delta)} \leq C(\delta)\left( \|\tilde\bu^{k,ext}\|_{\bL^3(\sO_{\eta^{k-1}_*})}+\|v^{k}\|_{L^3(0,L)}\right) \leq C(\delta)(\|\bu^{k}\|_{\bH^1(\sO)}+\|v^k\|_{H^\frac12(0,L)}).
\end{align}

Hence, for some $C>0$ that depends only on $\delta$, we obtain
\begin{align*}
	&I_5:=\left|	(\Delta t)\sum_{n=0}^N b^{\eta^n_*}\left( \bu^{n+1},v^{n+1}r\be_r,\left( \Delta t \sum_{k=n-j+1}^n\bu^{k,n}_{\sigma,\lambda}\right) \right) \right| \\
	&\leq\sqrt{h}	(\Delta t)\sum_{n=0}^N  \|\bu^{n+1}-v^{n+1}r\be_r\|_{\bL^6(\sO)} \|\nabla^{\eta^n_*}\bu^{n+1}\|_{\bL^2(\sO)} \left( \Delta t \sum_{k=n-j+1}^n\|\bu^{k,n}_{\sigma,\lambda}\|^2_{\bL^3(\sO)}\right)^{\frac12}\\
	&\leq\sqrt{h}	(\Delta t)\sum_{n=0}^N  \left( \|\bu^{n+1}\|_{\bH^1(\sO)}+\|v^{n+1}\|_{H^\frac12(0,L)}\right)  \|\nabla^{\eta^n_*}\bu^{n+1}\|_{\bL^2(\sO)} \left( \Delta t \sum_{k=n-j+1}^n\|\bu^{k,n}_{\sigma,\lambda}\|^2_{\bL^3(\sO)}\right)^{\frac12}\\
	&\leq { C(\delta)}\sqrt{h}\left( 	(\Delta t)\sum_{n=0}^N  \|\bu^{n+1}\|^2_{\bH^1(\sO)}\right)^{\frac32}. 
	\end{align*}
	{We used \eqref{boundl3} in the last step of the estimate above.}
	Hence for any $M>0$, we have
	\begin{align*}
		\bP\left( |	I_5 | ^{} \geq M^{}\right) \leq \frac{h^{\frac13}}{M^{\frac23}}\bE\left( (\Delta t)\sum_{n=0}^N  \|\bu^{n+1}\|^2_{\bH^1(\sO)}\right)\leq C\frac{h^{\frac13}}{M^{\frac23}}.
	\end{align*}
	Similar calculations give us {{that the term just before the penalty term in \eqref{MainWeakForm} can be estimated as follows:}}
		\begin{align*}
		&I_6:=	\left|\sum_{n=0}^N\int_\sO \left( \eta_*^{n+1}-\eta_*^n\right)  \bu^{n+1}\cdot \left( \Delta t \sum_{k=n-j+1}^n\bu^{k,n}_{\sigma,\lambda}\right) d\bx \right| \\
		&\leq (\Delta t)\sum_{n=0}^N\|v^{n+\frac12}\|_{L^2(0,L)}\|\bu^{n+1}\|_{\bL^6(\sO)}\left( (\Delta t)\sum_{k=n-j+1}^n    \|  \bu^{k,n}_{\sigma,\lambda} \|^2_{\bL^3(\sO)}\right) ^{\frac12}\sqrt{h}\\
		&\leq C\sqrt{hT} \max_{1\leq n\leq N}\|v^{n+\frac12}\|_{L^2(0,L)}  \left( (\Delta t)\sum_{n=0}^N    \|  \bu^{n} \|^2_{\bH^1(\sO)}\right).
	\end{align*}
	Hence,
	\begin{align*}
	\bP\left( |	I_6 | \geq M^{}\right) \leq \frac{h^{\frac14}}{M^{\frac12}}\bE\left(\max_{1\leq n\leq N}\|v^{n+\frac12}\|_{L^2(0,L)}+ (\Delta t)\sum_{n=0}^N  \|\bu^{n+1}\|^2_{\bH^1(\sO)}\right)\leq  C\frac{h^{\frac14}}{M^{\frac12}}.
\end{align*}

{{Next, we focus on the integral in \eqref{MainWeakForm} which contains the second-order derivative of displacement $\partial_{zz}\eta^{n+1}$.
	Since $\|\phi_\lambda\|_{H^2} \leq \frac{C}{\lambda^{2}}\|\phi\|_{L^2}$, we obtain}}
	\begin{align*}
&I_7:=	|(\Delta t)\sum_{n=0}^N	\int_0^L  \partial_{zz}\eta^{n+1}  \partial_{zz}\left( \Delta t \sum_{k=n-j+1}^nv^{k}_{\lambda}\right)|
\leq {CT}\frac{h}{\lambda^2}\max_{0\leq n\leq N}\left( \|\partial_{zz}\eta^n \|_{L^2(0,L)} \|v^n\|_{L^2(0,L)} \right)  .
	\end{align*}
	Now since $\lambda \sim h^{\frac14}$, we have
	\begin{align*}
		\bP(I_7 \geq M)  &\leq \frac{Ch^{\frac12}}{M}  \bE\left( 
		\max_{0\leq n\leq N}\|\partial_{zz}\eta^n \|^2_{L^2(0,L)}+ \max_{0\leq n\leq N}\|v^n\|^2_{L^2(0,L)}\right) 
		\leq C\frac{h^{\frac12}}{M} .
	\end{align*}
	
	{{What is left to estimate in \eqref{MainWeakForm} are the integral containing the transformed symmetrized gradients and the stochastic term. 
	We first focus of the term with the transformed symmetrized gradients.}}
Similarly as before, since $\|\phi_\lambda\|_{H^1} \leq \frac{C}{\lambda}\|\phi\|_{L^2}$, the term with the transformed symmetrized gradients yields:
	\begin{align*}
		 I_8:=&|	{(\Delta t)}{}\sum_{n=0}^{N}\int_\sO 
		\bD^{\eta^n_*}\bu^{n+1}\left( \Delta t \sum_{k=n-j+1}^n\bD^{\eta^n_*}(\bu^{k,n}_{\sigma,\lambda})\right) d\bx |  \\
		&\leq (\Delta t)\sum_{n=0}^{N}
		\|	\bD^{\eta^n_*}\bu^{n+1}\|_{\bL^2(\sO)}\left( {(\Delta t)} \sum_{k=n-j+1}^n\|\bD^{\eta^n_*}(\bu^{k,n}_{\sigma,\lambda})\|_{\bL^2(\sO)}\right)^{} \\
			&\leq \frac1{\lambda}(\Delta t)\sum_{n=0}^{N}
		\|	\bD^{\eta^n_*}\bu^{n+1}\|_{\bL^2(\sO)}\left( {(\Delta t)} \sum_{k=n-j+1}^n\|\bu^{k}\|_{\bL^2(\sO)}+\|v^{k}\|_{L^2(0,L)}\right) \\
		&\leq C \frac{{h}}{\lambda} \left( \max_{0\leq n\leq N}(\|\bu^n\|_{\bL^2(\sO)}+\|v^k\|_{L^2(0,L)}) 	{(\Delta t)}{{}} \sum_{n=0}^N\|\bD^{\eta^{n}_*}(\bu^{n+1})\|_{\bL^2(\sO)}\right).
	\end{align*}
	Hence, again, since we have
	 $\lambda\sim h^{\frac14}$, Young's inequality gives us,
	\begin{align*}
		\bP(I_8>M) & \leq \frac{{h}^{\frac34}}{M}\bE[\max_{0\leq n\leq N}(\|\bu^n\|^2_{\bL^2(\sO)}+\|v^k\|^2_{L^2(0,L)}) ]+ \frac{{h}^{\frac34}}{M^{}}\bE\left(  	{(\Delta t)} \sum_{n=0}^N\|\bD^{\eta^{n}_*}(\bu^{n+1})\|^2_{\bL^2(\sO)}\right) \\
		&\leq C\frac{h^\frac34}{M}.
	\end{align*}
	
	Finally, we treat the stochastic term using the same argument  as above. To bound the expectation we use Young's inequality and the argument presented in \eqref{tower} to obtain,
	\begin{align*}
		&\bE\left( \sum_{n=0}^N	|(G(\bU^{n},\eta_*^n)\Delta_n W, \bQ_n)|\right) \\
		&	\leq 		\bE\left( \sum_{n=0}^N	\|G(\bU^{n},\eta_*^n)\|_{L_2(U_0;\bL^2)}\|\Delta_n W\|_{U_0}  \left( (\Delta t) \sum_{k=n-j+1}^n\|\bu^{k,n}_{\sigma,\lambda}\|_{\bL^2(\sO)}^2 +\|v^{k}_{\lambda}\|_{L^2(0,L)}^2\right)^{\frac12}h^{\frac12}\right) 		\end{align*}	\begin{align*}
		& 	\leq 	h^{\frac12}	\bE\left( \sum_{n=0}^N	\|G(\bU^{n},\eta_*^n)\|^2_{L_2(U_0;\bL^2)}\|\Delta_n W\|^2_{U_0}  +\left( (\Delta t) \sum_{k=n-j+1}^n\|\bu^{k,n}_{\sigma,\lambda}\|_{\bL^2(\sO)}^2 +\|v^{k}_{\lambda}\|_{L^2(0,L)}^2\right)\right) 	\\
		&\leq h^{\frac12} C(\delta,\Tr \bQ)\bE\sum_{n=0}^N(\Delta t)[\|\bu^n\|^2_{\bL^2(\sO)}+\|v^n\|^2_{L^2(0,L)}] \leq Ch^{\frac12}.
	\end{align*}
	
	{{By combining these estimates we are now in a position to show}}
that the laws of the random variables mentioned in the statement of the theorem are tight. 
{{For this purpose we recall the definition of the set ${\mathcal{B}}_M$  for $0\leq \alpha<1$ and any $M>0$:}}
\begin{align*}
	{\mathcal{B}}_M:=&\{(\bu,v)\in L^2(0,T;\bH^\alpha(\sO))\times L^2(0,T;L^2(0,L)):
	\|{\bu}\|^2_{L^2(0,T;\bH^1(\sO))}+\|{v}\|^2_{L^2(0,T;H^{\frac12}(0,L))}
	\\
	&+\sup_{0<h<1}{h^{-\frac1{32}}}\int_h^{T}\left( \|T_h\bu-\bu\|^2_{\bL^2(\sO)}+\|T_hv-v\|^2_{L^2(0,L)}\right)  \le M\}.
\end{align*}
Observe that, due to Lemma \ref{compactLp}, $\sB_M$ is compact in $L^2(0,T;\bH^\alpha(\sO))\times L^2(0,T;L^2(0,L))$ for each $M>0$.
Now, an application of Chebyshev's inequality gives the desired result:
\begin{align*}
	\bP((\bu^+_N,v^+_N) \notin \sB_M)&\leq \bP\left( \|{\bu^+_N}\|^2_{L^2(0,T;\bH^1(\sO))}+\|{v^+_N}\|^2_{L^2(0,T;H^{\frac12}(0,L))}>\frac{M}2\right) \\
	&+\bP\left( \sup_{0 <h<1 }{h^{-\frac1{32}}}\int_h^{T}\left( \|T_h\bu^+_N-\bu^+_N\|^2_{\bL^2(\sO)}+\|T_hv^+_N-v^+_N\|^2_{L^2(0,L)}\right)  > \frac{M}2\right) \\
	&\leq \frac{C}{\sqrt{M}},
\end{align*}
where $C>0$ depends only on $\delta$, Tr$Q$ and the given data and is independent of $N$ and $\ep$.
This completes the proof of Lemma~\ref{tightuv}.
\end{proof}
Next we will state the rest of the tightness results. These will be used in Section \ref{almostsure} to obtain almost sure convergence via an application of Prohorov's theorem and the Skorohod representation theorem. 
\begin{lem}\label{tightl2}
	For fixed $\delta>0$, the following statements hold:
	\begin{enumerate}
		\item The laws of $\{\tilde\eta_{N}\}_{N\in \mathbb{N}}$ and that of $\{\tilde\eta^*_{N}\}_{N\in \mathbb{N}}$ are tight in $C([0,T],H^{s}(0,L))$ for any $s<2$.
		\item The laws of $\{\|\bu^+_N\|_{L^2(0,T;V)}\}_{N\in \mathbb{N}}$ are tight in $\R$.
		\item The laws of $\{\|v^*_N\|_{L^2(0,T;L^2(0,L))}\}_{N\in \mathbb{N}}$ are tight in $\R$.
	\end{enumerate}
\end{lem}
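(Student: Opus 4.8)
The plan is to dispatch parts (2) and (3) immediately from the a priori bounds of Lemma~\ref{bounds}, and to treat part (1) by exhibiting an explicit sequence of compact sets in $C([0,T];H^s(0,L))$ that capture all but vanishing mass. For (2) and (3): by Lemma~\ref{bounds}(4) we have $\bE\big[\|\bu^+_N\|_{L^2(0,T;V)}^2\big]\le C$, and by Lemma~\ref{bounds}(2) we have $\bE\big[\|v^*_N\|_{L^2(0,T;L^2(0,L))}^2\big]\le C$, with $C$ independent of $N$ and $\ep$. Since $\|\bu^+_N\|_{L^2(0,T;V)}$ and $\|v^*_N\|_{L^2(0,T;L^2(0,L))}$ are nonnegative real random variables with uniformly bounded second moments, Chebyshev's inequality gives $\bP\big(\|\bu^+_N\|_{L^2(0,T;V)}>M\big)\le C/M^2$ and likewise for $v^*_N$; hence for every $M$ the compact interval $[0,M]$ carries mass at least $1-C/M^2$ for every $N$, which is exactly tightness of the laws in $\R$.

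For (1), I would fix $s<2$, pick any $s<s'<2$ (only $s<2$ is actually used below), and for $M>0$ set
\begin{equation*}
\mathcal{K}_M:=\Big\{f\in C([0,T];H^{s}(0,L)) : \|f\|_{L^\infty(0,T;H^2_0(0,L))}\le M,\ \ \|f(t)-f(t')\|_{L^2(0,L)}\le M|t-t'|\ \ \forall\, t,t'\in[0,T]\Big\}.
\end{equation*}
I claim $\mathcal{K}_M$ is compact in $C([0,T];H^s(0,L))$. Indeed, for each fixed $t$ the set $\{f(t):f\in\mathcal{K}_M\}$ lies in the closed ball of radius $M$ in $H^2_0(0,L)$, which is relatively compact in $H^s(0,L)$ because the embedding $H^2_0(0,L)\hookrightarrow H^s(0,L)$ is compact for $s<2$; and $\mathcal{K}_M$ is equicontinuous as a family of $H^s(0,L)$-valued maps, since the interpolation inequality
\begin{equation*}
\|f(t)-f(t')\|_{H^s(0,L)}\le C\,\|f(t)-f(t')\|_{L^2(0,L)}^{1-s/2}\,\|f(t)-f(t')\|_{H^2(0,L)}^{s/2}\le C\,(2M)^{s/2}\,(M|t-t'|)^{1-s/2}
\end{equation*}
tends to $0$ uniformly in $f\in\mathcal{K}_M$ as $|t-t'|\to 0$ (here $1-s/2>0$ is precisely what forces $s<2$). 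Arzel\`a--Ascoli then yields relative compactness in $C([0,T];H^s(0,L))$, and closedness of $\mathcal{K}_M$ under uniform $H^s$-convergence is routine (the $L^2$-Lipschitz bound passes to the limit, and the $H^2_0$ bound by weak lower semicontinuity).

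It then remains to bound $\bP(\tilde\eta_N\notin\mathcal{K}_M)$ uniformly in $N$ and $\ep$. By \eqref{approxlinear} and \eqref{etaderi}, $\partial_t\tilde\eta_N=v^{\#}_N$ a.e., so $\|\tilde\eta_N(t)-\tilde\eta_N(t')\|_{L^2(0,L)}\le|t-t'|\,\|v^{\#}_N\|_{L^\infty(0,T;L^2(0,L))}$, while Lemma~\ref{bounds}(1)--(2) give $\bE\big[\|\tilde\eta_N\|^2_{L^\infty(0,T;H^2_0(0,L))}\big]\le C$ and $\bE\big[\|v^{\#}_N\|^2_{L^\infty(0,T;L^2(0,L))}\big]\le C$ with $C$ independent of $N$ and $\ep$. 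Chebyshev's inequality then gives
\begin{equation*}
\bP(\tilde\eta_N\notin\mathcal{K}_M)\le\bP\big(\|\tilde\eta_N\|_{L^\infty(0,T;H^2_0(0,L))}>M\big)+\bP\big(\|v^{\#}_N\|_{L^\infty(0,T;L^2(0,L))}>M\big)\le\frac{C}{M^2},
\end{equation*}
which yields tightness of the laws of $\{\tilde\eta_N\}$ in $C([0,T];H^s(0,L))$. The argument for $\{\tilde\eta^*_N\}$ is identical, using $\partial_t\tilde\eta^*_N=v^*_N$ from \eqref{etaderi} together with the corresponding bounds on $\tilde\eta^*_N$ and $v^*_N$ in Lemma~\ref{bounds}(1)--(2). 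I do not expect any serious obstacle here: parts (2) and (3) are immediate, and for (1) the only point requiring a little care is that we control the interpolants no better than Lipschitz in time with values in $L^2(0,L)$, so the equicontinuity in $C([0,T];H^s(0,L))$ genuinely relies on interpolating against the uniform $H^2_0$ bound and would break at the endpoint $s=2$ — which is why the statement is restricted to $s<2$.
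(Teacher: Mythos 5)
Your proposal is correct and follows essentially the same route as the paper: parts (2) and (3) via Chebyshev's inequality on the uniform second-moment bounds of Lemma~\ref{bounds}, and part (1) by bounding $\bP(\tilde\eta_N\notin\mathcal{K}_M)$ with Chebyshev, where $\mathcal{K}_M$ is the sublevel set of the $L^\infty(0,T;H^2_0(0,L))$ and $W^{1,\infty}(0,T;L^2(0,L))$ norms (your Lipschitz-in-time condition with $\partial_t\tilde\eta_N=v^{\#}_N$, resp.\ $v^*_N$, is exactly this). The only difference is cosmetic: the paper invokes the Aubin--Lions theorem for the compact embedding of $L^\infty(0,T;H^2_0)\cap W^{1,\infty}(0,T;L^2)$ into $C([0,T];H^s(0,L))$, $s<2$, whereas you verify it directly by interpolation and Arzel\`a--Ascoli.
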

\begin{proof}
	To prove the first statement we observe, thanks to Lemma \ref{bounds}, that $\tilde\eta_{N}$ and $\tilde\eta_{N}^*$ are bounded independently of $N$ and $\ep$ in $L^2(\Omega;L^\infty(0,T;H^2_0(0,L))\cap W^{1,\infty}(0,T;L^2(0,L)))$. Now the Aubin-Lions theorem {implies} that for $0<s<2$
	$$L^\infty(0,T;H^2_0(0,L))\cap W^{1,\infty}(0,T;L^2(0,L)) \subset\subset C([0,T];H^{s}(0,L)). 
	$$
	Hence consider
	\begin{align*}\mathcal{K}_M:=&\{\eta\in L^\infty(0,T;H^2_0(0,L))\cap W^{1,\infty}(0,T;L^2(0,L)):\\
	&\|\eta\|^2_{L^\infty(0,T;H^2_0(0,L))} + \|\eta\|^2_{ W^{1,\infty}(0,T;L^2(0,L))}\leq M\}.\end{align*}
	Using the Chebyshev inequality once again we obtain for some $C>0$ independent of $N,\ep$ that {the following holds:}
	\begin{equation}
	\begin{aligned}
	\bP\left[\tilde\eta_{N}\not\in {\mathcal{K}}_M\right]&\le \bP\left[
	{\|\tilde\eta_{N}\|}^2_{L^\infty(0,T;H^2_0(0,L))}\ge \frac{M}{2}\right]+
	\bP\left[
	{\|\tilde\eta_{N}\|}^2_{W^{1,\infty}(0,T;L^2(0,L))}\ge \frac{M}{2}\right]	\\
	&\le  \frac{4}{M^2}\bE\left[
	{\|\tilde\eta_{N}\|}^2_{L^\infty(0,T;H^2_0(0,L))}+\|\tilde\eta_{N}\|^2_
	{W^{1,\infty}(0,T;L^2(0,L))}\right]
	\leq \frac{C}{M^2}.
	\end{aligned}
	\end{equation}
Similarly to prove the second statement we use the Chebyshev inequality again to write for any $M>0$,
	\begin{align*}
	\bP[\| \bu^+_N\|_{L^2(0,T;V)}>M] \leq \frac1{M^2}\bE[\| \bu^+_N\|^2_{L^2(0,T;V)}] \leq \frac{C}{M^2}.
	\end{align*}
The third statement is proven identically.
		This completes the proof of the tightness results stated in Lemma \ref{tightl2}.
\end{proof}
So far we have been successful at obtaining tightness results only for a subset of the random variables defined at the beginning of Section \ref{subsec:approxsol}. However when passing to the limit we will also require almost sure convergence of the rest of the random variables for which we will use the following lemma.
\begin{lem}\label{difference}
	For a fixed $\delta>0$, the following convergence results hold: 
	\begin{enumerate}
		\item $\lim_{N\rightarrow\infty} \bE\int_0^T\|\bu_{N}-\bu^+_{N}\|^2_{\bL^2(\sO)}dt=0$,
		\quad $\lim_{N\rightarrow\infty} \bE\int_0^T\|\bu_{N}-\tilde\bu_{N}\|^2_{\bL^2(\sO)}dt=0$
		\item $\lim_{N\rightarrow\infty} \bE\int_0^T\|\eta_{N}-\tilde\eta_{N}\|^2_{H_0^2(0,L)}dt=0$,
		\quad $\lim_{N\rightarrow\infty} \bE\int_0^T\|\eta^+_{N}-\tilde\eta_{N}\|^2_{H_0^2(0,L)}dt=0$
		\item $\lim_{N\rightarrow\infty} \bE\int_0^T\|\eta^*_{N}-\tilde\eta^*_{N}\|^2_{H_0^2(0,L)}dt=0$
		\item  $\lim_{N\rightarrow\infty} \bE\int_0^T\|v_{N}-\tilde v_{N}\|^2_{L^2(0,L)}dt=0$, \quad $\lim_{N\rightarrow\infty} \bE\int_0^T\|v_{N}-v^{\#}_{N}\|^2_{L^2(0,L)}dt=0$. 
	\end{enumerate}
\end{lem}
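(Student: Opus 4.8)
The plan is to exploit a single observation valid for all four items: on each subinterval $[t^n,t^{n+1})$ every difference appearing in the statement is a scalar multiple — by a function of $t$ bounded by $1$ — of a \emph{consecutive difference} of the discrete iterates, namely $(\cdot)^{n+1}-(\cdot)^n$ or $(\cdot)^n-(\cdot)^{n+\frac12}$. Integrating in $t$ and summing then produces a prefactor of order $\Delta t=T/N$ in front of $\sum_n\bE\|(\cdot)^{n+1}-(\cdot)^n\|^2$ (or the corresponding half-step sum), and these sums are bounded uniformly in $N$ and $\ep$ by Theorem~\ref{energythm}. Letting $N\to\infty$ then yields each limit.

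First I would record the elementary identities on $[t^n,t^{n+1})$: $\bu_N-\bu^+_N=\bu^n-\bu^{n+1}$, $\bu_N-\tilde\bu_N=\frac{t-t^n}{\Delta t}(\bu^n-\bu^{n+1})$, and similarly $\eta_N-\tilde\eta_N=\frac{t-t^n}{\Delta t}(\eta^n-\eta^{n+1})$, $\eta^+_N-\tilde\eta_N=\frac{t^{n+1}-t}{\Delta t}(\eta^{n+1}-\eta^n)$, $\eta^*_N-\tilde\eta^*_N=\frac{t-t^n}{\Delta t}(\eta^n_*-\eta^{n+1}_*)$, $v_N-\tilde v_N=\frac{t-t^n}{\Delta t}(v^n-v^{n+1})$, and $v_N-v^{\#}_N=v^n-v^{n+\frac12}$. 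Integrating each identity over $(0,T)$ gives bounds of the form $\Delta t\sum_n\bE\|\cdot\|^2$ (the linear-interpolation variants acquiring an extra factor $\tfrac13$ from $\int_0^1 s^2\,ds$). For the structure quantities I would use that on $H^2_0(0,L)$ the full norm is equivalent to $\|\partial_{zz}\cdot\|_{L^2(0,L)}$ by Poincar\'e's inequality, so that — recalling $\eta^{n+1}=\eta^{n+\frac12}$ — the numerical dissipation $C^n_1$ controls $\|\eta^{n+1}-\eta^n\|^2_{H^2_0(0,L)}$.

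Next I would supply the uniform-in-$(N,\ep)$ bounds on the consecutive-difference sums. From Theorem~\ref{energythm}(3) together with the \emph{deterministic} lower bound $R+\eta^n_*\ge\delta$ (built into the construction of $\eta^n_*$), one gets $\bE\sum_n\|\bu^{n+1}-\bu^n\|^2_{\bL^2(\sO)}\le\delta^{-1}\,\bE\sum_n\int_\sO(R+\eta^n_*)|\bu^{n+1}-\bu^n|^2\,d\bx\le C$; Theorem~\ref{energythm}(4) gives $\bE\sum_n\big(\|v^{n+\frac12}-v^n\|^2_{L^2(0,L)}+\|\eta^{n+1}-\eta^n\|^2_{H^2_0(0,L)}\big)\le C$; and combining (3) with (4) through $\|v^{n+1}-v^n\|^2\le 2\|v^{n+1}-v^{n+\frac12}\|^2+2\|v^{n+\frac12}-v^n\|^2$ yields $\bE\sum_n\|v^{n+1}-v^n\|^2_{L^2(0,L)}\le C$. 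For the cut-off displacement I would note from \eqref{eta*} that $\eta^{n+1}_*-\eta^n_*=\theta_\delta(\eta^{n+1})(\eta^{n+1}-\eta^n)$ (since $\eta^n_*=\eta^{\min(n,n^\ast)}$ with $n^\ast:=\max\{n:\theta_\delta(\eta^n)=1\}$), hence $\|\eta^{n+1}_*-\eta^n_*\|_{H^2_0(0,L)}\le\|\eta^{n+1}-\eta^n\|_{H^2_0(0,L)}$, so its sum is controlled as well. Substituting all of this into the identities above bounds every quantity in the statement by $C\Delta t=CT/N\to0$.

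This argument is essentially bookkeeping and I do not expect a genuine obstacle. The points that need a little care — the closest thing to a difficulty — are assembling the uniform bounds $\bE\sum_n\|\bu^{n+1}-\bu^n\|^2\le C$ and $\bE\sum_n\|v^{n+1}-v^n\|^2\le C$ from items (3)--(4) of Theorem~\ref{energythm} together with the deterministic bound $R+\eta^n_*\ge\delta$, and verifying the identity $\eta^{n+1}_*-\eta^n_*=\theta_\delta(\eta^{n+1})(\eta^{n+1}-\eta^n)$ which reduces item (3) to the estimate already obtained for $\eta_N-\tilde\eta_N$.
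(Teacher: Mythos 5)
Your proposal is correct and takes essentially the same approach as the paper: reduce each difference on $[t^n,t^{n+1})$ to a consecutive-step difference, integrate in time to pick up a factor $\Delta t$, and control the resulting sums by the uniform numerical-dissipation bounds of Theorem \ref{energythm} together with the deterministic bound $R+\eta^n_*\ge\delta$. The paper only writes out case (1)$_2$ and states that the remaining cases follow similarly; your bookkeeping (including the identity $\eta^{n+1}_*-\eta^n_*=\theta_\delta(\eta^{n+1})(\eta^{n+1}-\eta^n)$ and the combination of Theorem \ref{energythm}(3)--(4) for $\|v^{n+1}-v^n\|_{L^2(0,L)}$) correctly supplies those omitted details.
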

\begin{proof} Statement (1)$_1$ follows immediately from Theorem \ref{energythm} (3). We prove (1)$_2$ below. The rest follows similarly {from the uniform estimates stated in} Theorem \ref{energythm}.
	\begin{align*}
	&\bE\int_0^T\|\bu_{N}-\tilde\bu_{N}\|^2_{\bL^2(\sO)}dt=
	\bE\sum_{n=0}^{N-1}\int_{t^n}^{t^{n+1}}\frac1{\Delta t}\|(t-t^n)\bu^{n+1}+(t^{n+1}-t-\Delta t)\bu^{n}\|^2_{\bL^2(\sO)}dt\\
	&=\bE\sum_{n=0}^{N-1}\|\bu^{n+1}-\bu^n\|_{\bL^2(\sO)}^2\int_{t^n}^{t^{n+1}} \left( \frac{t-t^n}{\Delta t}\right) ^2dt\leq \frac{CT}{\delta N} \rightarrow 0 \quad N\rightarrow \infty.
	\end{align*}
\end{proof}
While formulating an approximate system in terms of the piecewise linear and piecewise constant functions \eqref{AppSol} and \eqref{approxlinear} respectively, defined in Section \ref{subsec:approxsol}, we come across typical error terms generated as a result of discretizing the stochastic term. {To obtain the final convergence result we need estimates of those terms. We derive those error terms and obtain their estimates next.}

Observe that for given $N$ we can write
$$\tilde \eta_N^*=\eta_N^*+{(t-t^n)}\frac{\partial\tilde\eta_N^*}{\partial t},\quad  t\in(t^n,t^{n+1}),$$
and that the same {equation} holds true when $\eta^*$ is replaced by $\bu$. 
This gives for each $\omega\in\Omega$ and $t \in (t^n,t^{n+1})$ that
\begin{align*}
\frac{\partial((R+\tilde\eta_N^*)\tilde\bu_N)}{\partial t}&=\frac{\partial \tilde\bu_N}{\partial t}(R+\eta_N^*) +\frac{\partial \tilde\eta_N^*}{\partial t}(2\tilde\bu_N-\bu_N)\\
&=\sum_{n=0}^{N-1}\frac1{\Delta t}(\bu^{n+1}-\bu^n)(R+\eta^n_*) \,\chi_{(t^n,t^{n+1})} +v^*_N (2\tilde\bu_N-\bu_N).
\end{align*}
By integrating in time both sides and adding the equation for $\tilde v_N$ from \eqref{first} we obtain for 
any $(\bq,\psi)\in\sU_1$:
\begin{align}\label{I123}
&(	(R+\tilde\eta_N^*(t))\tilde\bu_N(t),\bq)+(\tilde v_N(t),\psi)\notag\\
&=\int_\sO\bu_0(R+\eta_0)\cdot\bq d\bx+\int_0^Lv_0\psi dz-\int_0^t\int_0^L(\partial_z\eta^+_N\partial_z\psi+\partial_{zz}\eta^+_N,\partial_{zz}\psi)dzds\notag\\
&+\sum_{n=0}^{N-1}\int_{0}^{t}\chi_{(t^n,t^{n+1})}\left( \int_{\sO}\frac{(\bu^{n+1}-\bu^n)}{\Delta t}(R+\eta^n_*) \,\bq \,d\bx 
+ \int_{0}^L\frac{(v^{n+\frac12}-v^n)}{\Delta t} \,\psi dz\right) ds
\notag\\
&
+\int_0^t\int_{\sO}v^*_N (2\tilde\bu_N-\bu_N)\bq dxds\notag\\
& =:(\bu_0(R+\eta_0),\bq)+(v_0,\psi) + (I_1(t)+I_2(t)+I_3(t),\bQ).
\end{align}
{We estimate the right hand-side to be able to pass to the limit as $N\to \infty$. We start by considering the integral $I_2$.} Thanks to \eqref{second}, $I_2$ on the right hand side is equal to
\begin{equation}\begin{split}\label{I2i}
&(I_2(t),\bQ)=-\frac{1}2\int_0^t\int_\sO 
v^*_N \bu^{+}_N\bq d\bx ds \\
&-2\nu\int_0^t\int_{\sO}(R+\eta_N^*)\bD^{\eta_N^*}(\bu^{+}_N)\cdot \bD^{\eta_N^*}(\bq) d\bx ds-\frac1{\ep}\int_0^t \int_\sO  \text{div}^{\eta^*_N}\bu^+_N\text{div}^{\eta^*_N}\bq d\bx ds \\
&-\frac12\int_0^t\int_{\sO}(R+\eta_N^*)((\bu^+_N-
v^{+}_Nr\be_r)\cdot\nabla^{\eta_N^*}\bu^{+}_N\cdot\bq
- (\bu^{+}_N-
v^{+}_N 
r\be_r)\cdot\nabla^{\eta_N^*}\bq\cdot\bu^{+}_N)d\bx ds\\
&+ 
\int_0^t\left( P_{{in}}\int_{0}^1q_z\Big|_{z=0}dr-P_{{out}}\int_{0}^1q_z\Big|_{z=1}dr\right)
ds +\int_0^t
(G(\bu_N,v_N,\eta^*_N)dW, \bQ )+(E_N(t),\bQ)\\
&=:(I_{2,1}(t)+I_{2,2}(t)+I_{2,3}(t)+I_{2,4}(t)+I_{2,5}(t)+I_{2,6}(t)+E_N(t),\bQ),
\end{split}\end{equation}
where the error term is given by
$$E_N(t)=\sum_{m=0}^{N-1}\left( \frac{t-t^m}{\Delta t}G(\bu^m,v^m,\eta^m_*)\Delta_mW-\int_{t^m}^tG(\bu^m,v^m,\eta^m_*)dW\right) \chi_{[t^m,t^{m+1})}.$$
{We will study the behavior of each term on the right hand-side of the expression for $I_2$ as { $N\to\infty$}. 
	We start by showing that the numerical error term $E_N$ approaches $0$ as $N\to\infty$. The remaining integrals will be considered
	in Lemma \ref{uvtightC} below. 
}
\begin{lem}\label{u*diff}
	{The numerical error $E_N$ of the stochastic term has the following property:}
	$$ {\bE\int_0^T\|E_N(t)\|^2_{\bL^2(\sO)\times L^2(0,L)}dt  \rightarrow } 0 \text{ as } {N\rightarrow\infty}.$$
\end{lem}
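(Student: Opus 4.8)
The plan is to exploit the explicit form of $E_N$ together with the It\^o isometry, and to let the single power of $\Delta t$ that survives the estimate drive the limit. For $t\in[t^m,t^{m+1})$ I would abbreviate $G_m:=G(\bu^m,v^m,\eta^m_*)$, which (by the measurability statements in the structure and fluid subproblems) is $\sF_{t^m}$-measurable and is constant in time on $[t^m,t^{m+1})$, so that $G_m\Delta_mW=\int_{t^m}^{t^{m+1}}G_m\,dW$ and the stochastic integrals below are well defined. Using $\frac{t-t^m}{\Delta t}=1-\frac{t^{m+1}-t}{\Delta t}$ I would first rewrite
$$E_N(t)=-\frac{t^{m+1}-t}{\Delta t}\int_{t^m}^{t}G_m\,dW+\frac{t-t^m}{\Delta t}\int_{t}^{t^{m+1}}G_m\,dW,$$
exhibiting $E_N(t)$ as a weighted difference of two It\^o integrals over adjacent intervals of length at most $\Delta t$.

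Next I would apply the It\^o isometry to each of these two integrals; since $G_m$ is $\sF_{t^m}$-measurable and piecewise constant this gives $\bE\|\int_{t^m}^{t}G_m\,dW\|^2_{\bL^2}=(t-t^m)\,\bE\|G_m\|^2_{L_2(U_0;\bL^2)}$, and analogously over $[t,t^{m+1}]$. Combining these with $\|a+b\|^2\le2\|a\|^2+2\|b\|^2$, integrating in $t$ over $[t^m,t^{m+1}]$ (the elementary integral $\int_{t^m}^{t^{m+1}}[(\tfrac{t^{m+1}-t}{\Delta t})^2(t-t^m)+(\tfrac{t-t^m}{\Delta t})^2(t^{m+1}-t)]\,dt=\tfrac{(\Delta t)^2}{6}$), and summing over $m$, I expect to arrive at
$$\bE\int_0^T\|E_N(t)\|^2_{\bL^2(\sO)\times L^2(0,L)}\,dt\le \frac{\Delta t}{3}\sum_{m=0}^{N-1}\Delta t\,\bE\|G_m\|^2_{L_2(U_0;\bL^2)}.$$

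It then remains to bound the sum on the right uniformly in $N$ (and $\ep$). For this I would use the growth assumption \eqref{growthG}, which gives $\|G_m\|^2_{L_2(U_0;\bL^2)}\le2\|\eta^m_*\|^2_{L^\infty(0,L)}\|\bu^m\|^2_{\bL^2(\sO)}+2\|v^m\|^2_{L^2(0,L)}$, together with the deterministic bound $\|R+\eta^m_*\|_{H^s(0,L)}<1/\delta$ with $s\in(3/2,2)$ built into the definition of $\eta^m_*$, which via the embedding $H^s(0,L)\hookrightarrow L^\infty(0,L)$ forces $\|\eta^m_*\|_{L^\infty(0,L)}\le C(\delta)$; combined with the lower bound $R+\eta^m_*\ge\delta$ this reduces the sum to $C(\delta)\,\Delta t\sum_m\bE[\|\bu^m\|^2_{\bL^2(\sO)}+\|v^m\|^2_{L^2(0,L)}]\le C(\delta)\,T\,\bE\max_{0\le m\le N}(\|\bu^m\|^2_{\bL^2(\sO)}+\|v^m\|^2_{L^2(0,L)})$, which is finite and independent of $N$ and $\ep$ by Theorem~\ref{energythm}(1). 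Hence the whole quantity is bounded by $CT^2/(3N)\to0$ as $N\to\infty$.

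I do not anticipate a genuine obstacle here: the statement is a discrete-versus-continuous comparison of stochastic integrals. The only points that need care are (i) decomposing $E_N(t)$ into the two It\^o integrals over $[t^m,t]$ and $[t,t^{m+1}]$ so that the isometry applies directly, (ii) bookkeeping the powers of $\Delta t$ so that exactly one full power (equivalently $1/N$) remains after integrating in $t$ and summing over $m$, and (iii) verifying that the constants produced by \eqref{growthG} and Theorem~\ref{energythm} depend only on $\delta$ and the data, and in particular are independent of $N$ and $\ep$.
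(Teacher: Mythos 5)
Your proposal is correct and follows essentially the same route as the paper: compare the discrete and continuous stochastic terms via the It\^o isometry (using that $G(\bu^m,v^m,\eta^m_*)$ is $\sF_{t^m}$-measurable and constant on each subinterval), and bound $\Delta t\sum_m\Delta t\,\bE\|G(\bu^m,v^m,\eta^m_*)\|^2_{L_2(U_0;\bL^2)}$ uniformly in $N$ and $\ep$ through \eqref{growthG} and Theorem \ref{energythm}, leaving one power of $\Delta t$ to force the limit. The only cosmetic difference is bookkeeping: the paper splits $E_N$ into the weighted increment $E_N^1$ (handled with the tower property, as in \eqref{tower}) and the integral $E_N^2$ (handled with the isometry), whereas you merge both pieces into It\^o integrals over $[t^m,t]$ and $[t,t^{m+1}]$ before applying the isometry; both yield the same $O(\Delta t)$ bound.
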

\begin{proof}
	First, for any $N$, we have 
	$$E_N(t)=\sum_{m=0}^{N-1}\left( \frac{t-t^m}{\Delta t}G(\bu^m,v^m,\eta^m_*)\Delta_mW-\int_{t^m}^tG(\bu^m,v^m,\eta^m_*)dW\right) \chi_{[t^m,t^{m+1})}=:E_N^1+E_N^2$$
	{We estimate $E^1_N$ and $E^2_N$. Recall our notation $\bL^2=\bL^2(\sO)\times L^2(0,L)$.	Observe that $E^1_N$ satisfies (see also \eqref{tower})},
	\begin{align*}
	&\bE\int_0^T\|E_N^1(t)\|_{\bL^2}^2dt	=\bE\sum_{n=0}^{N-1} \|G(\bu^n,v^n,\eta^n_*)\Delta_nW\|_{\bL^2}^2 \int_{t^n}^{t^{n+1}}|\frac{t-t^m}{\Delta t}|^2dt\\
	&=\bE\sum_{n=0}^{N-1} \|G(\bu^n,v^n,\eta^n_*)\Delta_nW\|_{\bL^2}^2 \frac{\Delta t}{3 }\leq \bE\sum_{n=0}^{N-1} \|G(\bu^n,v^n,\eta^n_*)\|_{L_2(U_0;\bL^2)}^2\|\Delta_nW\|_{U_0}^2 {\Delta t}\\
	&	= (\Delta t)^2\bE\sum_{n=0}^{N-1} \|G(\bu^n,v^n,\eta^n_*)\|_{L_2(U_0;\bL^2)}^2 \leq C \Delta t,
	\end{align*}
	where $C>0$ does not depend on $N$ or $\ep$, as a consequence of Theorem \ref{energythm}.
	
	{To estimate $E^2_N$ we use the It\^{o} isometry to obtain}
	\begin{align*}
	&\bE\int_0^T\|E_N^2(t)\|_{\bL^2}^2dt= \bE\sum_{n=0}^{N-1}\int_{t^n}^{t^{n+1}}\|\int_{t^n}^tG(\bu^n,v^n,\eta_*^n)dW\|_{\bL^2}^2dt\\
	&= \bE\sum_{n=0}^{N-1}\int_{t^n}^{t^{n+1}}\int_{t^n}^t\|G(\bu^n,v^n,\eta_*^n)\|_{L_2(U_0;\bL^2)}^2dsdt
	\\&= \frac12\bE\sum_{n=0}^{N-1}\|G(\bu^n,v^n,\eta_*^n)\|_{L_2(U_0;\bL^2)}^2(\Delta t)^2 \leq C\Delta t.
	\end{align*}	
	{Thus}, as $N \rightarrow \infty$ we {obtain}
	$\bE\int_0^T\|E_N(t)\|^2_{\bL^2(\sO)\times L^2(0,L)}dt \rightarrow 0.$	
\end{proof}  
{To be able to pass to the limit a $N\to\infty$  in the weak formulation, we consider the following random variable:
	\begin{align}\label{UN}
	\tilde\bU_N(t):=(	(R+\tilde\eta_N^*(t))\tilde\bu_N(t),\tilde v_N(t))-E_N(t),
	\end{align}
	and study its behavior as $N\to \infty$. We} prove the following tightness result, Lemma \ref{uvtightC}, for the laws of $\tilde\bU_N$. {We do this by employing appropriate stopping time arguments (see \cite{GTW} for a similar argument) and thus bypass the need for higher moments.}\\
We begin by defining 
\begin{align}\label{U1}
\sU_1:=\sU\cap (\bH^2(\sO)\times H^2_0(0,L)).
\end{align}
{where $\sU$ is given in \eqref{su}.}
Next for any $\sV_1\subset\subset \sU_1$, we denote by $\mu^{u,v}_N$ the probability measure of $\tilde\bU_N$: $$\mu^{u,v}_{N}:=\bP\left( \tilde\bU_{N}\in \cdot \right) \in Pr(C([0,T];\sV'_1)) ,$$
where $Pr(S)$, here and onwards, denotes the set of probability measures on a metric space $S$. Then we have the following tightness result.
\begin{lem}\label{uvtightC}
	For fixed $\ep>0$ and $\delta>0$, the laws $\{\mu_N^{u,v}\}_N$ of {the random variables } $\{\tilde\bU_N\}_{N}$ 
	are tight in 
	$C([0,T];\sV_1')$.
\end{lem}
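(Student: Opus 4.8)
The plan is to apply a standard tightness criterion in $C([0,T];\sV_1')$, namely the Arzelà–Ascoli-type characterization: a family of laws on $C([0,T];\sV_1')$ is tight provided (i) for each fixed $t$ the laws of $\tilde\bU_N(t)$ are tight in $\sV_1'$, and (ii) the paths are uniformly equicontinuous in probability, which here is most conveniently verified through a uniform moment bound on a fractional Sobolev (or Hölder) norm in time with values in $\sV_1'$, e.g. a bound of the form $\bE\|\tilde\bU_N\|_{W^{\alpha,p}(0,T;\sV_1')}^{p}\le C$ for some $\alpha>1/p$, or the Aldous-type estimate on increments $\bE\|\tilde\bU_N(t)-\tilde\bU_N(s)\|_{\sV_1'}^{p}\lesssim|t-s|^{1+\beta}$ on a suitable event of probability close to one. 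I would use the decomposition \eqref{I123}: $\tilde\bU_N = (\bu_0(R+\eta_0),v_0) + I_1 + I_2 + I_3$ (with the $E_N$ piece subtracted off, as in \eqref{UN}), and estimate each $I_k$ separately as a process with values in $\sV_1'$, testing against $\bQ\in\sV_1$ and using that $\sV_1 \hookrightarrow \sU$ with good control of the relevant traces and of $\bH^1$, $\bH^2$ norms.

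First I would handle the deterministic-type terms. The term $I_1$ is (after the telescoping already performed) controlled in $\sV_1'$ by the numerical-dissipation sums $\sum_n\|\bu^{n+1}-\bu^n\|_{\bL^2}^2$ and $\sum_n\|v^{n+\frac12}-v^n\|_{L^2}^2$, which are bounded in expectation by Theorem \ref{energythm}, giving Hölder-$\tfrac12$ continuity in time in $\sV_1'$ with a bounded expected constant; similarly $I_3$, the term $\int_0^t\int_\sO v_N^*(2\tilde\bu_N-\bu_N)\bq$, is estimated in $\sV_1'$ using $\|v_N^*\|_{L^\infty_t L^2}\lesssim 1$, $\|\bu_N\|_{L^\infty_t\bL^2}\lesssim1$ and the embedding $\sV_1\hookrightarrow \bL^\infty$; since these are genuine time integrals of $L^1(\Omega\times(0,T))$-bounded integrands, they are Lipschitz in $t$ in $\sV_1'$ with a bounded expected Lipschitz constant. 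For $I_2$ I would use the expansion \eqref{I2i}: the viscous term $I_{2,2}$ and penalty term $I_{2,3}$ are time integrals of quantities bounded in $L^1(\Omega\times(0,T))$ by Lemma \ref{bounds}(4),(6) (here $\ep$ is fixed, so the $1/\ep$ factor is harmless), hence again Lipschitz in $t$ in $\sV_1'$; the convective term $I_{2,4}$ is controlled via $\|\bu_N^+\|_{L^2_tV}$ and the $\bL^3/\bL^6$ Sobolev embeddings on the fixed domain exactly as in the $I_5$-estimate of Lemma \ref{tightuv}, yielding an $L^{4/3}(\Omega\times(0,T))$-bound and thus time-Hölder continuity with exponent $1/4$ in $\sV_1'$; the pressure-data term $I_{2,5}$ is manifestly Lipschitz; and $I_{2,6}$, the stochastic integral $\int_0^t(G(\bu_N,v_N,\eta_N^*)\,dW,\bQ)$, is where the only subtlety lies.

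The main obstacle is the stochastic term $I_{2,6}$: the Burkholder–Davis–Gundy bound
$\bE\sup_{|t-s|\le r}\|\int_s^t G\,dW\|_{\sV_1'}^{2}\lesssim \bE\int_s^t\|G\|_{L_2(U_0;\bL^2)}^2$
only gives $\bE\|\cdot\|_{L^2}^2\lesssim r$, i.e. one power of the increment, which is not by itself enough for the $1+\beta$ requirement in $t$; moreover using higher moments of $G$ is precisely what the authors wish to avoid, since the growth assumption \eqref{growthG} together with the energy estimates controls only the second moment. The remedy, which the parenthetical reference to \cite{GTW} signals, is a stopping-time/Aldous-type argument: for $R>0$ define the stopping time $\tau_N^R=\inf\{t: \|\bu_N(t)\|_{\bL^2}^2+\|v_N(t)\|_{L^2}^2 + \int_0^t\|\bu_N^+\|_V^2 > R\}\wedge T$; on $[0,\tau_N^R]$ the $L_2(U_0;\bL^2)$-norm of $G$ is bounded deterministically by $C(R)$ via \eqref{growthG}, so BDG now gives, for the stopped process, $\bE\sup_{|t-s|\le r}\|\int_{s\wedge\tau}^{t\wedge\tau}G\,dW\|_{\sV_1'}^{2}\le C(R)r$ and in fact, using BDG with exponent $p$ together with the a.s. bound on $\|G\|$, $\bE\|\int_{s\wedge\tau}^{t\wedge\tau}G\,dW\|_{\sV_1'}^{p}\le C(R,p)|t-s|^{p/2}$ with $p/2>1$ for $p>2$. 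Combined with Markov's inequality and the uniform (in $N$) energy bounds of Theorem \ref{energythm} which make $\bP(\tau_N^R<T)\le C/R$ small uniformly in $N$, this yields: for any $\rho>0$ there is $R$ and a modulus of continuity such that, off an event of probability $\le\rho$, all the $\tilde\bU_N$ lie in a fixed compact subset of $C([0,T];\sV_1')$ (pointwise-in-$t$ precompactness in $\sV_1'$ coming from the $\bL^2(\sO)\times L^2(0,L)$-bound and the compact embedding $\bL^2(\sO)\times L^2(0,L)\hookrightarrow\hookrightarrow\sV_1'$, uniform equicontinuity from the increment estimates just listed for $I_1,I_3,I_{2,1}$–$I_{2,6}$ and from Lemma \ref{u*diff} for the subtracted error $E_N$). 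This is exactly the statement that $\{\mu_N^{u,v}\}$ is tight in $C([0,T];\sV_1')$, completing the proof.
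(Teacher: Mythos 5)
Your proposal follows essentially the same route as the paper's proof: the same decomposition \eqref{I123}--\eqref{I2i}, first-moment (Chebyshev-type) bounds on the temporal regularity of the deterministic integrals, a stopping-time argument combined with a BDG/fractional-Sobolev estimate for the stochastic integral precisely to avoid unavailable higher moments, and compactness in $C([0,T];\sV_1')$ obtained from uniform-in-$N$ time regularity with values in $\sU_1'$, which embeds compactly into $\sV_1'$ (the paper packages this as the compact sets $B^D_M+B^S_M$ via Theorem 2.2 of \cite{FG95} rather than an Arzel\`a--Ascoli/Aldous formulation, but the mechanism is the same). One small correction: in the paper's notation $I_1$ is the elastic term, and the increment term cannot be bounded uniformly in $N$ directly by the numerical-dissipation sums as your aside suggests (that bound degenerates like $(|t-s|/\Delta t)^{1/2}$); this is harmless, however, since your treatment of $I_2$ through the expansion \eqref{I2i} supplies exactly the estimates that are actually needed.
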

\begin{proof}
	{Recall that the random variables $\{\tilde\bU_{N}\}_N$ consist of two parts, a deterministic part and a stochastic part. 
		We will show that the stochastic terms belong to 
		$$
		B^S_M := \{{\bf Y} \in C([0,T];\sV_1');\|{\bf Y}\|_{W^{\alpha,q}(0,T;\sU_1')}\leq M\}
		$$ 
		while the deterministic terms belong to 
		$$B^D_M:=\{{\bf X} \in C([0,T];\sV_1');\|{\bf X}\|_{H^{1}(0,T;\sU'_1)}\leq M\},
		$$
		for some $M>0$ and $\alpha$ such that $0<\alpha <\frac12$ and $\alpha q>1$ where $q>2$.
		To do this, we first define
		$$
		B_M := B^D_M+B^S_M
		$$
		and note that } 	
	$$\{\tilde\bU_N\in B_M\} \supseteq \{I_1+\sum_{i=1}^5I_{2,i}+I_3 \in B^D_M\}\cap\{I_{2,6} \in B^S_M\},$$
	{where the integrals $I_1, I_{2,i}$ and $I_3$ are defined in \eqref{I123} and \eqref{I2i}.}
	{This implies that} 
	for the complement of the set $B_M$ we have,
	$$ \mu^{u,v}_{N} \left( B^c_M\right) \leq     \bP\left( \|I_1+\sum_{i=1}^5 I_{2,i}+I_3\|_{H^{1}(0,T;\sU_1')} > M\right) + \bP\left( \|I_{2,6}\|_{W^{\alpha,q}(0,T;\sU_1')} > M\right) .$$
	We want to show that for some $C=C(\delta,T,\ep)>0$ independent of $N$ we have 
		$$\mu^{u,v}_{N}(B_M)\geq 1-\frac{C}{M}.$$
		To do this we need to bound the integrals $I_1$, $I_{2,i}$ and $I_3$.
		These bounds rely heavily on the uniform bounds obtained in Lemma \ref{bounds}.
		To estimate these terms we introduce the following simplified notation:
		$
		\bar\sup_{\bQ}:=\sup_{\bQ \in \sU_1, |\bQ|_{\sU_1}=1}.
		$

	{We start with $I_1$. The estimate of $I_1$ follows by observing that there exists some $C>0$ independent of $N,\ep$ such that}	
	\begin{align*}
	\bE\|\partial_t I_{1}(t)\|^2_{L^2(0,T;\sU_1')} &\leq \bE\,
	\bar\sup_{\bQ}\int_0^T|\int_0^L(\partial_z\eta^+_{N}\partial_z\psi+\partial_{zz}\eta^+_{N}\partial_{zz}\psi)dz|^2dt\\
	& \leq \bE\int_0^T\|\eta^+_{N}(t)\|^2_{H^2_0(0,L)}dt \leq C.
	\end{align*}
	Using the continuous Sobolev embedding $\bH^2(\sO) \xhookrightarrow{} \bL^\infty(\sO)$  we similarly treat { $I_{2,1}+I_3$},
	\begin{align*}
	&	\bE\|\partial_t ({ I_{2,1}+I_3})\|_{L^2(0,T;\sU_1')} \leq \bE\left(\bar\sup_{\bQ}\int_0^T\|v^*_{N}\|^2_{L^2(0,L)}\|(2\tilde\bu_{N}-\bu_{N}-\frac12\bu^+_{N})\|_{\bL^2(\sO)}^2\|\bq\|_{\bL^\infty(\sO)}^2dt\right) ^{\frac12}\\
	& \leq \bE\left( \|v^*_{N}\|^2_{L^\infty(0,T;L^2(0,L))}\left( \|\tilde\bu_{N}\|^2_{L^\infty(0,T;\bL^2(\sO))}+\|\bu_{N}\|^2_{L^\infty(0,T;\bL^2(\sO))}+\|\bu^+_{N}\|^2_{L^\infty(0,T;\bL^2(\sO))}\right) \right)^{\frac12} \\
	& \leq \left( \bE\|v^*_{N}\|^2_{L^\infty(0,T;L^2(0,L))}\left( \bE\|\tilde\bu_{N}\|^2_{L^\infty(0,T;\bL^2(\sO))}+\bE\|\bu_{N}\|^2_{L^\infty(0,T;\bL^2(\sO))}+\bE\|\bu^+_{N}\|^2_{L^\infty(0,T;\bL^2(\sO))}\right)\right) ^{\frac12} 
	\\	& \leq C.
	\end{align*}
	{	To estimate $I_{2,2}$ we first recall that we can write} $\nabla^{\eta_N^*}\bq =\nabla\bq(\nabla A^\omega_{\eta_N^*})^{-1}$. {Now we note that} for $\frac32<s<2$ we have that $\|R+\eta^*_N\|_{H^s(0,L)}<\frac1{\delta}$ for any $\omega\in\Omega$ and $t\in[0,T]$, which implies that $\sup_{t\in[0,T]}\| (\nabla A^\omega_{\eta_N^*})^{-1}\|_{\bL^{\infty}(\sO)}<C$ and thus, \begin{align}\label{gradq}\sup_{t\in[0,T]}\|\nabla^{\eta_N^*}\bq\|_{\bL^2(\sO)} \leq C\|\nabla\bq\|_{\bL^2(\sO)}\quad \forall \omega\in\Omega,
	\end{align}
	where $C$ depends only on $\delta$. The same argument gives us that $$\sup_{t\in[0,T]}\|\text{div}^{\eta^*_N}\bq\|_{L^2(\sO)}\leq \sup_{t\in[0,T]}\sqrt{2}\|\nabla^{\eta^*_N}\bq\|_{\bL^2(\sO)}\leq C\|\nabla \bq\|_{\bL^2(\sO)}.$$
	
	{	Thus for $I_{2,2}$ we obtain the following bounds}:
	\begin{align*}
	\bE\|\partial_t I_{2,2}(t)\|^{2}_{L^{2}(0,T;\sU_1')} &\leq \bE\left(\bar\sup_{\bQ}\int_0^T\|\bD^{\eta^*_{N}}\bu^+_{N}\|^{2}_{\bL^{2}(\sO)}\|\bD^{\eta^*_{N}}\bq\|_{\bL^2(\sO)}^{2}dt\right) \\
	&\leq C(\delta)\bE\left(\bar\sup_{\bQ}\int_0^T\|\bD^{\eta^*_N}\bu^+_{N}\|^{2}_{\bL^{2}(\sO)}\|\nabla\bq\|_{\bL^{2}(\sO)}^{2}dt\right) 
	 \leq C(\delta).
	\end{align*}
	{To estimate $I_{2,3}$ we note that for a fixed $\ep>0$, Lemma \ref{bounds} implies the existence of a constant $C$ dependent on $\delta$ such that:}
	\begin{align*}
	\bE\|\partial_t I_{2,3}(t)\|_{L^{2}(0,T;\sU_1')}^{2} &\leq C(\delta) \frac1{\ep^2}\bE\left(\bar\sup_{\bQ}\int_0^T\|\text{div}^{\eta^*_{N}}\bu^+_{N}\|^{2}_{L^2(\sO)}\|\text{div}^{\eta^*_{N}}\bq\|^{2}_{L^2(\sO)}\right)^{}\\
	&\leq C(\delta) \frac1{\ep^2}\bE\left(\int_0^T\|\text{div}^{\eta^*_{N}}\bu^+_{N}\|^{2}_{L^2(\sO)}\right)^{}
	\leq \frac{C(\delta)}{\ep} .
	\end{align*} Observe that the estimate above depends on $\ep$ and hence will not be available in the next section when we pass $\ep\to 0$.
	{The integral $I_{2,4}$ can be estimated similarly after the use of the Ladyzenskaya inequality \cite{T_NSE} as follows:}
	\begin{align*}
	\bE\|\partial_t I_{2,4}(t)&\|_{L^{2}(0,T;\sU_1')} 
	\leq C(\delta)\bE\,\Big( \bar\sup_{\bQ}\int_0^T \Big( (\|\bu^+_{N}\|_{\bL^2(\sO)}+{\|v^{+}_{N}\|_{L^2(0,L)}})\times\\
	&\qquad\qquad(\|\nabla^{\eta^*_N} \bu^+_{N}\|_{\bL^2(\sO)}\|\bq\|_{\bL^\infty(\sO)}+\|\nabla^{\eta^*_N} \bq\|_{\bL^4(\sO)}\|\bu^+_{N}\|_{\bL^4(\sO)})\Big) ^{2}dt\Big) ^{\frac12}\\
	&\leq C(\delta) \bE\,\left( \int_0^T (\|\bu^+_{N}\|_{\bL^2(\sO)}+\|v^{+}_{N}\|_{L^2(0,L)})^{2}\|\nabla \bu^+_{N}\|_{\bL^2(\sO)}^{2}dt\right)^{\frac12} \\
	&\leq C(\delta) \bE \left[\left( \|\bu^+_{N}\|_{L^\infty(0,T;\bL^2(\sO))}+\|v^{+}_{N}\|_{L^\infty(0,T;L^2(0,L))}\right) ^{}\left( \int_0^T \|\nabla \bu^+_{N}\|_{\bL^{2}(\sO)}^2dt\right) ^{\frac12}\right]\\
	&\leq C(\delta)\left(  \bE \left( \|\bu^+_{N}\|^2_{L^\infty(0,T;\bL^2(\sO))}+\|v^{+}_{N}\|^2_{L^\infty(0,T;L^2(0,L))}\right) \bE\left( \int_0^T \|\nabla \bu^+_{N}\|_{\bL^{2}(\sO)}^2dt\right)\right) ^{\frac12}
	\\& \leq C(\delta).
	\end{align*}
	The treatment of the term $I_{2,5}$ is trivial.
	
	{Hence, by combining all the bounds derived so far we obtain the following estimate for the deterministic part of $\tilde\bU_N$:}
	\begin{equation}\begin{split}
	&\bP\left( \|I_1+\sum_{i=1}^5I_{2,i} +I_3\|_{H^{1}(0,T;\sU_1')} > M\right)\\ 
	&\qquad\quad\leq \bP\left( \|I_1\|_{H^{1}(0,T;\sU_1')}+\|\sum_{i=1}^5 I_{2,i}\|_{H^{1}(0,T;\sU_1')}+\|I_3\|_{H^{1}(0,T;\sU_1')} > M\right) \\
	&\qquad\quad\leq \frac1M\bE\left( \|I_1\|_{H^{1}(0,T;\sU_1')} + \sum_{i=1}^5\|I_{2,i}\|_{H^{1}(0,T;\sU_1')}+\|I_3\|_{H^{1}(0,T;\sU_1')} \right)\\
	&\qquad\quad\leq \frac{C(\delta,\ep)}{M} \label{dterms}.
	\end{split}		\end{equation}
	{	To treat the stochastic part of $\tilde\bU_N$ we estimate the stochastic term $I_{2,6}$ as follows. First we use the fact that for any $0<\alpha<\frac12$ and $\Phi$ any progressively measurable process in $L^p(\Omega;L^p(0,T;L_2(U_0;X)))$, we have} (see \cite{NTT22},\cite{FG95}),
	\begin{align}\label{FGBDG}
	\bE\|	\int_0^{\cdot} \Phi dW\|^q_{W^{\alpha,q}(0,T;X)} &\leq C\bE	\int_0^T\|\Phi\|^q_{L_2(U_0,X)} ds.
	\end{align}
	To deal with the $q^{th}$ power appearing in the inequality \eqref{FGBDG}, we define the following stopping times:
	$$\tau_M:=\inf_{t\geq 0}\{\sup_{s\in[0,t]}\|(\bu_{N},v_N)\|_{\bL^2}>M\} \wedge T.$$
	Observe that, since $(\bu_{N},v_N)$ has \cadlag sample paths in $\bL^2(\sO) \times L^2(0,L)$ by definition, and {since} the filtration $(\sF_t)_{t \geq 0}$ is right continuous by assumption, $\tau_M$ is indeed a stopping time. Using the Chebyshev inequality we then obtain that
	\begin{align*}
	&\bP\left(\|\int_0^{\cdot}
	G(\bu_N,v_N,\eta^*_{N})dW\|_{W^{\alpha,q};(0,T;\sU')} >M \right) \\
	&\leq 	\bP\left(\|\int_0^{\cdot\wedge \tau_M}
	G(\bu_N,v_N,\eta^*_{N})dW\|_{W^{\alpha,q}(0,T;\sU_1')} >M, \tau_M = T\right) 
	+\bP\left(\tau_M < T\right)\\
	&\leq 	\bP\left(\|\int_0^{\cdot\wedge \tau_M}
	G(\bu_N,v_N,\eta^*_{N})dW\|_{W^{\alpha,q}(0,T;\sU_1')} >R, \tau_M = T\right) 
	+\bP\left(\sup_{t\in[0,T]}\|(\bu_N(t),v_N(t))\|_{\bL^2}\geq M\right)\\
	&\leq \frac{1}{M^q}	\bE\left(\|\int_0^{\cdot\wedge \tau_M}
	G(\bu_N,v_N,\eta^*_{N})dW\|^q_{W^{\alpha,q}(0,T;\sU_1')} \right) +\frac1{M^2}\bE\sup_{t\in[0,T]}\|(\bu_N(t),v_N(t))\|^2_{\bL^2}.
	\end{align*}
	Notice that $\eta^*_{N}$ and $(\bu_{N},v_N)$ are $(\sF_t)_{t\geq 0}$-adapted. 
	Hence using \eqref{FGBDG} and \eqref{growthG} we obtain the following upper bounds for the first term on the right hand side of the above inequality,
	\begin{align*}
	&\leq \frac{C}{ M^q} \bE\left( \int_0^{T\wedge \tau_M}\|G(\bu_{N},v_N,\eta^*_{N})\|_{L_2(U_0,\bL^2)}^q\right)\\
	&\leq \frac{C}{ M^q} \bE\left( \int_0^{T\wedge \tau_M}\|\eta^*_{N}\|_{L^\infty(0,L)}^q\|\bu_{N}\|^q_{\bL^2(\sO)}+\|v_{N}\|_{L^2(0,L)}^q\right)\\
	&\leq \frac{CTM^{q-2}}{\delta^q M^q}\bE[\sup_{s\in[0,T]}\|\bu_{N}\|^2_{\bL^2(\sO)}+\|v_{N}\|_{L^2(0,L)}^2].
	\end{align*}
	Hence for small enough $\delta>0$ we {have}
	\begin{align}
	\bP\left(\|\int_0^{\cdot}G(\bu_{N},v_N,\eta^*_{N})dW\|_{W^{\alpha,q};(0,T;\sU')} >M \right) \leq \frac{CT}{\delta^q M^2}\bE\sup_{s\in[0,T]}[\|\bu_{N}\|^2_{\bL^2(\sO)}+\|v_{N}\|_{L^2(0,L)}^2].
	\end{align}
	{Finally, the uniform bounds obtained in Lemma \ref{bounds} imply}
	\begin{equation}\begin{split}
	\bP\left(\|I_{2,6}\|_{W^{\alpha,q}(0,T;\sU_1')}> M\right) 
	\leq \frac{CT}{M^2\delta} \label{sterms}.
	\end{split}		
	\end{equation}
	To sum up, for some $C=C(\delta,T,\ep)>0$ independent of $N$ we have obtained that
	$$\mu^{u,v}_{N}(B_M)\geq 1-\frac{C}{M}.$$
	Now since $B_M$ is compactly embedded in $C([0,T];\sV_1')$ (see e.g. Theorem 2.2 \cite{FG95}) we conclude that $\{\mu^{u,v}_{N}\}_{N}$ is tight in $C([0,T];\sV'_1)$.
\end{proof}

{
	\begin{cor}\label{rem:weakconv} The following weak convergence results hold:
		\begin{enumerate}
			\item	The laws of the random variables $(\bu^+_{N}, v^+_{N})$
			converge weakly, up to a subsequence, to some probability measures on the spaces $L^2(0,T;\bL^2(\sO))\times L^2(0,T;L^2(0,L))$, $0\leq \alpha<1$
			respectively. 
			\item The laws of $\bu_{N},\tilde\bu_{N}$ converge weakly in $L^2(0,T;\bL^2(\sO))$, and the laws of $v_{N}, \tilde v_{N}$ converge weakly in $L^2(0,T;L^2(0,L))$ to the same probability measures as the weak limit of the laws of $\bu^+_N$ and $v^\#_N$ respectively. 
			\item The laws of structure variables $\eta_N$, $\eta_N^+$, and $\tilde\eta_N$ converge weakly to the same probability measure in $L^2(0,T;H^s(0,L))$ for any $0\le s<2$.
		\end{enumerate}
	\end{cor}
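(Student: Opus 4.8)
The plan is to combine the tightness statements already established in Lemmas \ref{tightuv} and \ref{tightl2} with Prohorov's theorem, and then to transfer the resulting weak convergence of the time-shifted fluid velocity $\bu^+_N$, the shifted structure velocity $v^+_N$, and the piecewise-linear displacement $\tilde\eta_N$ to all the remaining piecewise-constant and piecewise-linear approximations by means of the vanishing-difference estimates of Lemma \ref{difference}, together with the uniform dissipation bounds of Theorem \ref{energythm}. Throughout, one keeps passing to subsequences so that, at the end, a single subsequence serves simultaneously for the fluid velocity, the structure velocity, and the structure displacement.

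For (1): by Lemma \ref{tightuv} the joint laws of $(\bu^+_N,v^+_N)$ are tight in $L^2(0,T;\bH^\alpha(\sO))\times L^2(0,T;L^2(0,L))$ for every $0\le\alpha<1$, hence, via the continuous injection $\bH^\alpha(\sO)\hookrightarrow\bL^2(\sO)$, also in $L^2(0,T;\bL^2(\sO))\times L^2(0,T;L^2(0,L))$. Prohorov's theorem then yields a subsequence, not relabelled, along which these joint laws converge weakly; a diagonal extraction over $\alpha\uparrow 1$ makes the convergence hold simultaneously on $L^2(0,T;\bH^\alpha(\sO))\times L^2(0,T;L^2(0,L))$ for all $\alpha<1$.

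For (2): I would first record the elementary bounds
\begin{align*}
\bE\int_0^T\|\bu_N-\bu^+_N\|^2_{\bL^2(\sO)}\,dt&\le\frac{\Delta t}{\delta}\,\bE\sum_{n=0}^{N-1}\int_\sO(R+\eta^n_*)|\bu^{n+1}-\bu^n|^2\,d\bx\le\frac{C\,\Delta t}{\delta},\\
\bE\int_0^T\|v^+_N-v^{\#}_N\|^2_{L^2(0,L)}\,dt&\le\Delta t\,\bE\sum_{n=0}^{N-1}\|v^{n+1}-v^{n+\frac12}\|^2_{L^2(0,L)}\le C\,\Delta t,
\end{align*}
both consequences of Theorem \ref{energythm}(3), which together with Lemma \ref{difference} show that $\bu_N-\bu^+_N$ and $\tilde\bu_N-\bu^+_N$ tend to $0$ in $L^2(\Omega;L^2(0,T;\bL^2(\sO)))$, while $v_N-v^+_N$, $\tilde v_N-v^+_N$ and $v^{\#}_N-v^+_N$ tend to $0$ in $L^2(\Omega;L^2(0,T;L^2(0,L)))$. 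Since $L^2(\Omega)$-convergence implies convergence in probability, a Slutsky-type argument --- if $X_N\to X$ in law in a Polish space $Y$ and $d_Y(X_N,Y_N)\to 0$ in probability, then $Y_N\to X$ in law --- upgrades the weak convergence of part (1) to weak convergence of $\bu_N,\tilde\bu_N$ in $L^2(0,T;\bL^2(\sO))$ and of $v_N,\tilde v_N$ (as well as of $v^{\#}_N$) in $L^2(0,T;L^2(0,L))$, all along the same subsequence and to the same limiting laws.

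For (3): by Lemma \ref{tightl2}(1) the laws of $\tilde\eta_N$ are tight in $C([0,T];H^s(0,L))$ for every $s<2$, hence also in $L^2(0,T;H^s(0,L))$ thanks to $C([0,T];H^s(0,L))\hookrightarrow L^2(0,T;H^s(0,L))$; Prohorov's theorem gives a further subsequence along which they converge weakly, which we merge with the subsequence fixed above. Since $\bE\int_0^T\|\eta_N-\tilde\eta_N\|^2_{H^2_0(0,L)}\,dt\to 0$ and $\bE\int_0^T\|\eta^+_N-\tilde\eta_N\|^2_{H^2_0(0,L)}\,dt\to 0$ by Lemma \ref{difference}(2), and $H^2_0(0,L)\hookrightarrow H^s(0,L)$, the same Slutsky argument shows that $\eta_N$ and $\eta^+_N$ converge weakly in $L^2(0,T;H^s(0,L))$ to the common limit of $\tilde\eta_N$. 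There is no genuine obstacle here: the argument is organized bookkeeping once Lemmas \ref{tightuv}, \ref{tightl2}, \ref{difference} and Theorem \ref{energythm} are in hand, and the only points deserving a word of care are the repeated subsequence extractions (so that one index serves for all variables at once) and the Slutsky step, whose key ingredient is that convergence of the differences in $L^2(\Omega;L^2(0,T;X))$ implies convergence in probability on the Polish space $L^2(0,T;X)$ and is therefore compatible with weak convergence of the laws.
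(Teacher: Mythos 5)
Your proposal is correct and follows essentially the same route as the paper: tightness from Lemmas \ref{tightuv} and \ref{tightl2} combined with Prohorov's theorem, and then the vanishing mean-square differences (Lemma \ref{difference} together with Theorem \ref{energythm}(3)) upgraded through the convergence-together (Slutsky-type) theorem of Billingsley to transfer the limiting laws to $\bu_N,\tilde\bu_N,v_N,\tilde v_N,v^{\#}_N,\eta_N,\eta^+_N$. The extra details you supply (the explicit bound on $v^+_N-v^{\#}_N$ and the diagonal extraction in $\alpha$) are consistent with, and merely flesh out, the paper's brief argument.
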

	The first statement follows from the Prohorov's theorem and Lemma \ref{tightuv}.
	Furthermore, by combining these convergence results with Lemma \ref{difference} and elementary tools from probability (see e.g. Theorem 3.1 \cite{Billingsley99}) we can see that the laws of $\bu_{N},\tilde\bu_{N}$ converge weakly in $L^2(0,T;\bL^2(\sO))$, and the laws of $v_{N}, \tilde v_{N}$ converge weakly in $L^2(0,T;L^2(0,L))$ to the same probability measures as the weak limit of the laws of $\bu^+_N$ and $v^+_N$ respectively, which is the second statement of the corollary. 
	The same argument implies that the laws of structure variables converge weakly to the same probability measure in $L^2(0,T;H^s(0,L))$ for any $0\le s<2$.
	
	To recover the weak solution in the limit of these subsequences, 
	we need to upgrade the weak convergence results above to  almost sure convergence. This is done next.
}
\subsection{Almost sure convergence}\label{almostsure}
Let 
$\mu_{N}$ be the joint law of the random variable\\
$\mathcal{U}_{N}:=(\bu_{N},\bu^+_{N},v_{N},v^{\#}_{N},\eta_{N}, \eta^*_{N},\eta^+_{N},\tilde\eta^*_{N},\tilde\eta_{N},\|\bu^+_N\|_{L^2(0,T;V)},\|v^*_{N}\|_{L^2(0,T;L^2(0,L))},
\tilde\bU_N,
W)$ taking values in the phase space
\begin{align*}
\Upsilon&:=[L^2(0,T;\bL^2(\sO))]^2\times[L^2(0,T;L^2(0,L))]^3
\\ &\times[L^2(0,T;H^s(0,L))]^3\times [C([0,T],H^s(0,L))]^2\times\R^2 \\
&\times C([0,T];\sV_1')\cap L^2(0,T;\bL^2(\sO)\times L^2(0,L))\times C([0,T];U),
\end{align*}
for some fixed $\frac32<s<2$.

First observe that, since $C([0,T];U)$ is separable and metrizable by a complete metric,  the sequence of Borel probability measures, $\mu^W_{N}(\cdot):=\bP(W\in\cdot)$, that are constantly equal to one element, is tight on $C([0,T];U)$.

Next, recalling Lemmas \ref{tightuv}, \ref{tightl2}, \ref{uvtightC}, \ref{u*diff} and Remark \ref{rem:weakconv}, and using Tychonoff's theorem it follows that the sequence of the joint probability measures $\mu_{N}$ of the approximating sequence $\mathcal{U}_{N}$ is tight on the Polish space $\Upsilon$. 

{To state the next result we  will be using 
	the notation ``=d'' to denote random variables that are ``equal in
	distribution'' i.e., the random variables have the same laws as random variables taking values on the same
	given phase space  $\Upsilon$.}
\begin{theorem}\label{skorohod} There exists a probability space $(\bar\Omega,\bar\sF,\bar\bP)$ and random variables $\bar{\mathcal{U}}_{N}:=(
	\bar{\bu}_{N},\bar{\bu}^+_{N},\bar v_{N},\bar v^{\#}_{N},\bar\eta_{N}, \bar\eta^*_{N},\bar\eta^+_{N},\bar{\tilde\eta}^*_{N},\bar{\tilde\eta}_{N},m_N, k_N,\bar{\tilde\bU}_N,\bar{W}_N)$ and\\
	$\bar{\mathcal{U}}:=(
	\bar{\bu},\bar{\bu}^+,\bar v,{\bar v}^{\#},\bar\eta, \bar\eta^*,\bar\eta^+,\bar{\tilde\eta}^*,\bar{\tilde\eta},m,k,\bar{\tilde\bU},\bar{W})$
	defined on this new probability space, such that
	\begin{enumerate}
		\item $\bar{\mathcal{U}}_{N}=^d\mathcal{U}_{N}$.
		\item $\bar{\mathcal{U}}_{N} \rightarrow \bar{\mathcal{U}}$ $\bar\bP$-a.s. in the topology of $\Upsilon$.
		\item $\bar\bu=\bar{\bu}^+ ,\quad \bar\eta=\bar{\eta}^+=\bar{\tilde\eta},\quad \bar{\eta}^*=\bar{\tilde\eta}^*,\quad \bar v=\bar v^{\#}$ and $\bar{\tilde\bU}=((R+\bar\eta^*)\bar\bu,\bar v) $.
		\item $\partial_t\bar\eta=\bar v$ in the sense of distributions, almost surely.
	\end{enumerate}
\end{theorem}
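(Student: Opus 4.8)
The plan is to derive items (1)--(2) from the tightness already established together with the Skorohod representation theorem, and then to obtain the identities in (3)--(4) by transferring, via equality of laws, the functional identities and strong-convergence estimates that are already available on the original probability space.

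\emph{Items (1) and (2).} By Lemmas~\ref{tightuv}, \ref{tightl2}, \ref{uvtightC}, \ref{u*diff} and Corollary~\ref{rem:weakconv}, together with the fact that the law of $W$ is (trivially) tight on $C([0,T];U)$, Tychonoff's theorem gives tightness of $\{\mu_N\}_N$ on the Polish space $\Upsilon$. Prohorov's theorem then yields a weakly convergent subsequence, and the Skorohod representation theorem (e.g. the version in \cite{VW96}) produces a probability space $(\bar\Omega,\bar\sF,\bar\bP)$, random variables $\bar{\mathcal U}_N =^d \mathcal U_N$ and a limit $\bar{\mathcal U}$ with $\bar{\mathcal U}_N\to\bar{\mathcal U}$ $\bar\bP$-a.s. in the topology of $\Upsilon$; in particular every component converges $\bar\bP$-a.s. in its own topology, and since $\bar W_N=^d W$ is a $U$-valued Wiener process, so is $\bar W_N$ and hence (as an a.s. limit of Wiener processes) so is $\bar W$. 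This gives (1) and (2).

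\emph{Item (3): matching the repeated limits.} The key point is that any Borel relation among the components of $\mathcal U_N$ holding $\bar\bP$-a.s., and any integrability bound on such components, transfers verbatim to $\bar{\mathcal U}_N$ because the two tuples have the same law on $\Upsilon$. Thus Lemma~\ref{difference}(1) gives $\bar\bE\int_0^T\|\bar\bu_N-\bar\bu^+_N\|^2_{\bL^2(\sO)}\,dt\to 0$, so $\bar\bu_N-\bar\bu^+_N\to 0$ in probability in $L^2(0,T;\bL^2(\sO))$; combined with the a.s. $\Upsilon$-convergence $\bar\bu_N\to\bar\bu$, $\bar\bu^+_N\to\bar\bu^+$ in that space and uniqueness of limits, we get $\bar\bu=\bar\bu^+$ $\bar\bP$-a.s. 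The remaining parts of Lemma~\ref{difference} yield in exactly the same way $\bar\eta=\bar{\tilde\eta}=\bar\eta^+$ in $L^2(0,T;H^s(0,L))$, $\bar\eta^*=\bar{\tilde\eta}^*$, and $\bar v=\bar v^{\#}$.

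\emph{Item (3): identifying $\bar{\tilde\bU}$.} By \eqref{UN}, $\tilde\bU_N=\big((R+\tilde\eta^*_N)\tilde\bu_N,\ \tilde v_N\big)-E_N$. Since $\tilde\bu_N,\tilde v_N$ are not themselves components of $\mathcal U_N$, we first trade them for $\bu_N,v_N$: using $\|\tilde\bu_N-\bu_N\|\to 0$, $\|\tilde\eta^*_N-\eta^*_N\|\to 0$, $\|\tilde v_N-v_N\|\to 0$ in the relevant $L^2(\Omega;L^2(0,T;\cdot))$ norms (Lemma~\ref{difference}), the \emph{deterministic} bound $\|R+\eta^*_N\|_{L^\infty((0,T)\times(0,L))}\le C(\delta)$ built into the cut-off \eqref{eta*}, the uniform bounds of Lemma~\ref{bounds}, and Lemma~\ref{u*diff} for $E_N$, one obtains
\[
\tilde\bU_N-\big((R+\eta^*_N)\bu_N,\ v_N\big)\longrightarrow 0\quad\text{in } L^1\big(\Omega;L^1(0,T;\bL^1(\sO)\times L^1(0,L))\big).
\]
Transferring this to the new space and letting $N\to\infty$: since $\bar\eta^*_N\to\bar\eta^*$ strongly in $L^2(0,T;H^s(0,L))\hookrightarrow L^2(0,T;L^\infty(0,L))$ (as $s>\tfrac12$) and $\bar\bu_N\to\bar\bu$ in $L^2(0,T;\bL^2(\sO))$, the nonlinear product converges, $(R+\bar\eta^*_N)\bar\bu_N\to(R+\bar\eta^*)\bar\bu$ in $L^1(0,T;\bL^1(\sO))$, while $\bar v_N\to\bar v$ in $L^2(0,T;L^2(0,L))$ and $\bar{\tilde\bU}_N\to\bar{\tilde\bU}$ in $C([0,T];\sV_1')\cap L^2(0,T;\bL^2(\sO)\times L^2(0,L))$, which in particular forces $L^1(0,T;\cdot)$-convergence. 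Matching the limits (testing against elements of $\sV_1$) gives $\bar{\tilde\bU}=\big((R+\bar\eta^*)\bar\bu,\ \bar v\big)$ $\bar\bP$-a.s.

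\emph{Item (4) and the main obstacle.} By \eqref{etaderi}, $\partial_t\tilde\eta_N=v^{\#}_N$, i.e. $\int_0^T\big(\tilde\eta_N\,\phi'(t)+v^{\#}_N\,\phi(t)\big)\,dt=0$ in $L^2(0,L)$ for every $\phi\in C_c^\infty(0,T)$; this is a Borel functional of $(\tilde\eta_N,v^{\#}_N)$ vanishing $\bar\bP$-a.s., hence it vanishes for $(\bar{\tilde\eta}_N,\bar v^{\#}_N)$ as well, and passing to the limit with $\bar{\tilde\eta}_N\to\bar{\tilde\eta}=\bar\eta$ in $L^2(0,T;H^s(0,L))$ and $\bar v^{\#}_N\to\bar v^{\#}=\bar v$ in $L^2(0,T;L^2(0,L))$ gives $\partial_t\bar\eta=\bar v$ in the sense of distributions, almost surely. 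The delicate step is the identification of $\bar{\tilde\bU}$: because the piecewise-linear interpolants $\tilde\bu_N,\tilde v_N$ are absent from the controlled tuple, one must reroute through Lemma~\ref{difference} and then pass to the limit in the \emph{nonlinear} term $(R+\eta^*_N)\bu_N$, which is precisely where the strong $L^2(0,T;H^s)$-convergence of the geometric variable $\eta^*_N$ with $s>\tfrac12$ --- guaranteed by the deterministic bound of the cut-off construction and the compactness in Lemma~\ref{tightl2} --- is indispensable, since $\bar\bu_N$ converges only strongly in $L^2$ in space and no better.
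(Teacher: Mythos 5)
Your proposal is correct and follows essentially the same route as the paper: tightness plus Prohorov and the Skorohod representation for items (1)--(2), transfer of the Lemma~\ref{difference} and Lemma~\ref{u*diff} estimates to the new space (via equality of laws, then uniqueness of limits in probability, which is the paper's Borel--Cantelli step) for item (3), and transfer of the distributional identity $\partial_t\tilde\eta_N=v^{\#}_N$ followed by passage to the limit for item (4). The only minor difference is that the paper invokes the stronger representation of Theorem 1.10.4 in \cite{VW96}, providing explicit measurable maps $\phi_N$ with $\bar{\mathcal{U}}_N=\mathcal{U}_N\circ\phi_N$ (used later for adaptedness and for identifying $m_N$, $k_N$), whereas your Borel-functional/law-transfer arguments suffice for the statement itself.
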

\begin{proof}
	The existence of the probability space {and the existence of} the new random variables, {as well as the} first two statements {of the theorem} follow from an application of the Skorohod representation theorem. In fact, thanks to Theorem 1.10.4 in \cite{VW96}, the probability space and the random variables $\bar{\mathcal{U}}_N$ can be chosen such that for every $N$, \begin{align}\label{newrv}
	\bar{\mathcal{U}}_N(\bar\omega)={\mathcal{U}}_N(\phi_N(\bar\omega)) \quad \text{ for }\bar\omega\in\bar\Omega,
	\end{align} and $\bar \bP\circ\phi_N^{-1}=\bP$, where $\phi_N:\bar\Omega\rightarrow\Omega$ is measurable.

	Thanks to these explicit maps we can identify the real-valued random variables $m_N$ as ${m}_N= \|\bar \bu^+_N\|_{L^2(0,T;V)}$. Hence almost sure convergence of ${m}_N$ implies that $\|\bar \bu^+_N\|_{L^2(0,T;V)}$ is bounded a.s. {and this boundedness will be used in passing $N\to \infty$. }
	We thus have, up to a subsequence, 	\begin{align}\label{uweak}	\bar \bu^+_N \rightharpoonup \bar \bu \quad\text{ weakly in } {L^2(0,T;V)} \quad \bar\bP-a.s.	\end{align}
	Similarly, there exists $\bar v^*$ such that,
	\begin{align}\label{vweak}	\bar v^*_N \rightharpoonup \bar v^* \quad\text{ weakly in } {L^2(0,T;L^2(0,L))} \quad \bar\bP-a.s.	\end{align}
	
	The third statement then follows from part (1), Lemmas \ref{difference}, \ref{u*diff} and an application of the Borel-Cantelli lemma. Hence we have
	\begin{align}\label{uvcont1}((R+\bar\eta^*)\bar\bu,\bar v) \in C([0,T];\sV'_1) \quad \bar\bP-a.s.\end{align}
	To prove the fourth statement we begin with the fact that
	$$\int_0^T\int_0^L \partial_t\tilde\eta_{N}\phi dzdt=\int_0^T\int_0^L { v^{\#}_{N}\phi} dzdt,\qquad \forall \phi\in C_0^1(0,T;L^2(0,L)),$$
	and thus integration by parts yields,
	$$
	-\int_0^T\int_0^L \tilde\eta_{N}\partial_t\phi dzdt=\int_0^T\int_0^L { v^{\#}_{N}\phi}dzdt,\quad \forall \phi\in C_0^1(0,T;L^2(0,L)).$$
	Statement (1) (see also \eqref{newrv}) then implies that
	$$
	-\int_0^T\int_0^L \bar{\tilde\eta}_{N}\partial_t\phi dzdt=\int_0^T\int_0^L {\bar{ v}^{\#}_{N}\phi}dzdt,\quad \forall \phi\in C_0^1(0,T;L^2(0,L)),$$
	and thus passing $N \rightarrow \infty$ and using statement (3) we come to the desired conclusion. The same argument along with \eqref{vweak} also gives us
	that,
	$$\partial_t\bar\eta^*=\bar v^*,\quad \text{ almost surely.}$$
\end{proof}

{Next we will construct a complete, right-continuous filtration $(\bar{\mathcal{F}}_t)_{t \geq 0}$ on the new probability space $(\bar \Omega,\bar{\mathcal{F}},\bar \bP)$ given in Theorem \ref{skorohod}, to which the noise processes and the candidate solutions are adapted. This filtration will be used to define the new stochastic basis that appears in the definition of martingale solutions given in Definition \ref{def:martingale}. With this in mind we denote by $\sF_t'$ the $\sigma$-field generated by the random variables $(\bar{\bu}(s),\bar v(s)),\bar\eta(s),\bar{W}(s)$ for all $s \leq t$. Then we define
	\begin{align}\label{Ft}
	\mathcal{N} &:=\{\mathcal{A}\in \bar{\mathcal{F}} \ |  \bar \bP(\mathcal{A})=0\},\qquad
	\bar{\mathcal{F}}^0_t:=\sigma(\mathcal{F}_t' \cup \mathcal{N}),\qquad
	\bar{\mathcal{F}}_t :=\bigcap_{s\ge t}\bar{\mathcal{F}}^0_s.
	\end{align}
	We note here that \eqref{uvcont1} gives us stochastic processes $((R+\bar\eta^*)\bar\bu,\bar v)$ that are $(\bar\sF_t)_{t\geq 0}-$progressively measurable and thus helps in identifying the limit of the stochastic integral as we pass {to the limit as} $N \rightarrow\infty$ (see also Lemma \ref{conv_G} where we pass {to the limit as} $\ep\rightarrow 0$). 
	
	For each $N$ we also define a filtration $\left(\bar{\mathcal{F}}^{N}_t \right)_{t \geq 0}$ on  $(\bar \Omega,\bar{\mathcal{F}},\bar \bP)$ the same way as above but using the processes $(\bar{\bu}_{N}(s),\bar{v}_{N}(s)),\bar\eta_N(s),\bar W_N(s)$  instead. Because of \eqref{newrv}, we can see that $\bar{\bu}_{N},\bar{\eta}_{N},\bar{v}_{N}$ are \cadlag and that the aforementioned pointwise definition of the filtration $(\bar\sF^N_t)_{t\geq 0}$ makes sense.  Moreover, using usual arguments we can see that $\bar W_N$ is an $\bar\sF^N_t$-Wiener process (see e.g. \cite{BFH18}).
	
	Next, relative to the new stochastic basis $(\bar\Omega,\bar\sF,(\bar\sF^{N}_t)_{t\geq0},\bar\bP,\bar W_N)$, 
	thanks to \eqref{newrv} we can see that for each $N$, the following equation holds $\bar\bP$-a.s. for every $t \in [0,T]$ and any $\bQ\in \sD$:
	\begin{equation}\begin{split}\label{approxsystem}
	&(\bar{\tilde\bU}_N,\bQ)=
	(\bu_0(R+\eta_0),\bq)+( v_0,\psi)-\int_0^t\int_0^L(\partial_z\bar\eta^+_{N}\partial_z\psi+\partial_{zz}\bar\eta^+_{N}\partial_{zz}\psi)\\
	&-\frac{1}2\int_0^t\int_\sO 
	\bar{v}^*_{N} \bar\bu^{+}_{N}\cdot \bq  +\int_0^t\int_{\sO}
	\bar v^*_{N} (2\bar{\tilde\bu}_{N}-\bar{\bu}_{N})\cdot\bq \\
	&-\frac12\int_0^t\int_{\sO}(R+\bar\eta_{N}^*)((\bar\bu^+_{N}-\bar v^{+}_{N} 
	r\be_r)\cdot\nabla^{\bar\eta_{N}^*}\bar\bu^{+}_{N}\cdot\bq
	- (\bar\bu^{+}_{N}-
	\bar{v}^{+}_{N} 
	r\be_r)\cdot\nabla^{\bar\eta_{N}^*}\bq\cdot\bar\bu^{+}_{N})\\
	&-2\nu\int_0^t\int_{\sO}(R+\bar{\eta}_{N}^*)\bD^{\bar\eta_{N}^*}(\bar\bu^{+}_{N})\cdot \bD^{\bar\eta_{N}^*}(\bq) dxds -\frac1{\ep}\int_0^t \int_\sO   \text{div}^{\bar\eta^*_{N}}\bar\bu^{+}_{N}\text{div}^{\bar\eta^*_{N}}\bq \\
	&+\int_0^t\left( P_{{in}}\int_{0}^1q_z\Big|_{z=0}-P_{{out}}\int_{0}^1q_z\Big|_{z=1}\right)
	+\int_0^t 
	(G({\bar\bu}_{N},\bar v_N,\bar\eta^*_{N})d\bar W_N, \bQ).
	\end{split}
	\end{equation}

	Observe also that the bounds obtained in Lemma \ref{bounds} hold for the new random variables $\bar{\mathcal{U}}_{N}$ as well. 
		As a result, weak convergence results  up to a subsequence (due to the bounds in Lemma \ref{bounds}) give us the following: 
	\begin{equation}
	\begin{split}\label{sols1}
	&\bar \bu \in L^2(\bar\Omega;L^\infty(0,T;\bL^2(\sO)))\cap L^2(\Omega;L^2(0,T;V)),
	\\	&	\bar v,\bar v^*\in L^2(\bar\Omega;L^\infty(0,T;L^2(0,L))), 
	\\	&\bar {\eta},	\bar \eta^* \in L^2(\bar\Omega;L^\infty(0,T;H^2_0(0,L)))\cap L^2(\bar\Omega;W^{1,\infty}(0,T;L^2(0,L))),
	\end{split}	\end{equation}
	{where $\bar \bu, \bar v,\bar v^*, \bar {\eta}$, and $\bar \eta^*$ are the limits defined in Theorem~\ref{skorohod}.
		We also have the following upgraded convergence results for the displacements. Namely,}
	{notice that} Theorem \ref{skorohod} statement (2) implies that for given $\frac32<s<2$ (see \cite{MC13} Lemma 3),
	\begin{align}\label{etaunif1}
	\bar\eta_{N} \rightarrow \bar\eta \text{ and } \bar\eta^*_{N} \rightarrow \bar\eta^* \,\,\,\text { in } L^\infty(0,T;H^s(0,L))\, a.s.
	\end{align}
	and thus {the following uniform convergence result holds}
	\begin{align}\label{etaunif}
	\bar\eta_{N} \rightarrow \bar\eta \text{ and } \bar\eta^*_{N} \rightarrow \bar\eta^*\,\,\, \text { in } L^\infty(0,T;C^1[0,L])\, a.s.
	\end{align}
	
	{Combining these regularity and convergence results we can now pass to the limit 
		as $N\to\infty$ in the weak formulations of the approximate problems \eqref{approxsystem} stated on the fixed reference domain $\sO$, defined
		on the probability space $(\bar\Omega,\bar\sF,\bar\bP)$, while keeping $\ep>0$ fixed.
		We consider the random variables $\bar{\mathcal{U}}_{N}$ defined on the probability space $(\bar\Omega,\bar\sF,\bar\bP)$ and pass to
		the limit $N \rightarrow \infty$ in \eqref{approxsystem} and show that the limit satisfies the { weak formulation \eqref{second}} with divergence-free penalty.
		Except for the divergence-free penalty term,
		this is the same weak formulation as stated in the definition of martingale solutions Definition~\ref{def:martingale}. 
		More precisely, the following theorem holds true.
	}
	\begin{theorem}[Existence for the problem with penalized compressibility]\label{exist1} 
		For the stochastic basis $(\bar\Omega,\bar\sF,(\bar\sF_t)_{t \geq 0},\bar\bP,\bar W)$ as found in Theorem \ref{skorohod}, given any fixed $\ep,\delta>0$,  {the processes $(\bar\bu,\bar\eta,\bar\eta^*)$ obtained in Theorem \ref{skorohod} (see \eqref{sols1}) are such that} $\bar\eta^*$ and $((R+\bar\eta^*)\bar\bu,\partial_t\bar\eta)$ are $(\bar\sF_t)_{t \geq 0}$-progressively measurable with $\bar\bP$-a.s. continuous paths in $H^s(0,L)$, $\frac32<s<2$ and $\sV_1'$ respectively {and such that the following weak formulation 
			holds $\bar\bP$-a.s. for every $t\in[0,T]$ and for every $\bQ\in \sD$}:
		\begin{equation}\begin{split}\label{martingale1}
		&(	(R+\bar{\eta} ^*(t))\bar\bu (t),\bq)+(\partial_t\bar{ \eta}(t),\psi)
		=(\bu_0(R+\eta_0),\bq)+( v_0,\psi)\\
		&-\int_0^t\int_0^L\partial_z\bar\eta \partial_z\psi+\partial_{zz}\bar\eta \partial_{zz}\psi 
		+\frac12\int_0^t\int_{\sO}\partial_s\bar {\eta} ^*\bar{\bu} \cdot\bq \\
		&-\frac12\int_0^t\int_{\sO}(R+\bar\eta ^*)((\bar\bu-\partial_t\bar {\eta}
		r\be_r)\cdot\nabla^{\bar\eta ^*}\bar\bu \cdot\bq
		- (\bar\bu^{}-\partial_t\bar {\eta} 
		r\be_r)\cdot\nabla^{\bar\eta ^*}\bq\cdot\bar\bu )\\
		&-2\nu\int_0^t\int_{\sO}(R+\bar{\eta} ^*) \bD^{\bar\eta ^*}(\bar\bu )\cdot \bD^{\bar\eta ^*}(\bq)  +\frac1{\ep}\int_0^t \int_\sO   \text{div}^{\bar\eta^*}\bar\bu\text{ div}^{\bar\eta^*}\bq\\
		&+ 
		\int_0^t\left( P_{{in}}\int_{0}^1q_z\Big|_{z=0}-P_{{out}}\int_{0}^1q_z\Big|_{z=1}\right)
		+\int_0^t
		(	G(\bar\bU ,\bar\eta^*)d\bar W, \bQ).
		\end{split}
		\end{equation}
	\end{theorem}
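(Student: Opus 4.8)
The plan is to pass to the limit $N\to\infty$ in the approximate weak formulation \eqref{approxsystem}, which holds $\bar\bP$-a.s.\ for every $t\in[0,T]$ and every $\bQ\in\sD$ with respect to the stochastic basis $(\bar\Omega,\bar\sF,(\bar\sF^N_t)_{t\ge0},\bar\bP,\bar W_N)$, using the almost sure convergences of Theorem \ref{skorohod}, the weak convergences \eqref{uweak}--\eqref{vweak}, and the uniform bounds of Lemma \ref{bounds} (which hold for the barred variables since these have the same laws). First I would record the structural facts the statement requires: by \eqref{uvcont1} and Theorem \ref{skorohod}(3) the limit process $\bar\bU:=((R+\bar\eta^*)\bar\bu,\partial_t\bar\eta)=\bar{\tilde\bU}$ has $\bar\bP$-a.s.\ continuous paths in $\sV_1'$, and by \eqref{etaunif1} the process $\bar\eta^*$ has $\bar\bP$-a.s.\ continuous paths in $H^s(0,L)$, $\tfrac32<s<2$. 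Since $\bar\bu,\bar v,\bar\eta,\bar W$ are adapted to $(\bar\sF_t)_{t\ge0}$ by the construction \eqref{Ft}, and since the cut-off construction expresses $\bar\eta^*_N(t)$ as a measurable function of the history of $\bar\eta_N$ up to time $t$, taking the $\bar\bP$-a.s.\ limit shows $\bar\eta^*$ and $\bar\bU$ are $(\bar\sF_t)_{t\ge0}$-adapted, hence, being path-continuous, progressively measurable.

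Next I would pass to the limit in the deterministic terms of \eqref{approxsystem} term by term, first testing also against deterministic temporal weights $\phi\in C_c^\infty(0,T)$ and afterwards recovering the identity for every $t$ by continuity. The left-hand side converges, using Theorem \ref{skorohod}(3)--(4) and the $C([0,T];\sV_1')$-convergence (together with the uniform $L^\infty(0,T;\bL^2(\sO))$ bound and a density argument for $\bQ\in\sD\setminus\sV_1$), to $((R+\bar\eta^*(t))\bar\bu(t),\bq)+(\partial_t\bar\eta(t),\psi)$. The linear elastic terms converge by the weak convergence $\bar\eta^+_N\rightharpoonup\bar\eta$ in $L^2(0,T;H^2_0(0,L))$. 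The viscous and the $\tfrac1{\ep}$-penalty terms are handled by combining $\bar\bu^+_N\rightharpoonup\bar\bu$ weakly in $L^2(0,T;V)$ with the uniform convergence $\bar\eta^*_N\to\bar\eta^*$ in $L^\infty(0,T;L^\infty(0,L))$ (indeed in $C^1$, by \eqref{etaunif}), which yields $\nabla^{\bar\eta^*_N}\bar\bu^+_N\rightharpoonup\nabla^{\bar\eta^*}\bar\bu$ and $\text{div}^{\bar\eta^*_N}\bar\bu^+_N\rightharpoonup\text{div}^{\bar\eta^*}\bar\bu$ weakly in $L^2((0,T)\times\sO)$; the bound controlling the penalty term here is allowed to depend on $\ep$. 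For the advection term $b^{\bar\eta^*_N}$ I would use that $\bar\bu^+_N\to\bar\bu$ strongly in $L^2(0,T;\bL^2(\sO))$ a.s.; interpolating against the uniform $L^\infty(0,T;\bL^2(\sO))\cap L^2(0,T;V)$ bounds, hence against the resulting uniform $L^4((0,T)\times\sO)$ bound, upgrades this to strong convergence in $L^3((0,T)\times\sO)$, and combining with the weak $L^2L^2$ convergence of the transformed gradient, the uniform $C^1$ convergence of $\bar\eta^*_N$, and $\bar v^+_N\to\bar v$, passes the full nonlinearity to the limit. Finally the terms containing $\bar v^*_N$ combine, using $\bar v^*_N\rightharpoonup\partial_t\bar\eta^*$ weakly in $L^2L^2$ and the strong convergence of $\bar\bu^+_N,\bar{\tilde\bu}_N,\bar\bu_N$ to $\bar\bu$, into the single term $\tfrac12\int_0^t\int_\sO\partial_s\bar\eta^*\,\bar\bu\cdot\bq$ of \eqref{martingale1}.

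The main obstacle is the passage to the limit in the stochastic integral $\int_0^t(G(\bar\bu_N,\bar v_N,\bar\eta^*_N)\,d\bar W_N,\bQ)$, which is delicate because the integrands converge only $\bar\bP$-a.s.\ after Skorohod, because the driving Wiener processes $\bar W_N$ and the filtrations $(\bar\sF^N_t)_{t\ge0}$ depend on $N$, and because we wish to avoid estimates on higher moments. I would first check, via the standard martingale characterization argument, that $\bar W$ is a $U$-valued $(\bar\sF_t)_{t\ge0}$-Wiener process and that $\bar W_N\to\bar W$ $\bar\bP$-a.s.\ in $C([0,T];U)$. Since \eqref{growthG} is Lipschitz in all three arguments, the a.s.\ convergences $\bar\bu_N\to\bar\bu$ in $L^2(0,T;\bL^2(\sO))$, $\bar v_N\to\bar v$ in $L^2(0,T;L^2(0,L))$ and $\bar\eta^*_N\to\bar\eta^*$ in $L^\infty(0,T;L^\infty(0,L))$ give $G(\bar\bu_N,\bar v_N,\bar\eta^*_N)\to G(\bar\bu,\partial_t\bar\eta,\bar\eta^*)$ $\bar\bP$-a.s.\ in $L^2(0,T;L_2(U_0;\bL^2))$; the linear growth in \eqref{growthG} together with the uniform energy bounds of Lemma \ref{bounds} supplies uniform integrability, so the convergence also holds in probability. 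Truncating with the stopping times $\tau_M$ exactly as in the proof of Lemma \ref{uvtightC} to control the norms entering the stochastic integral, I would then invoke the standard lemma on convergence of stochastic integrals with respect to varying filtrations (see e.g.\ \cite{BFH18}) to conclude that $\int_0^{\cdot}(G(\bar\bu_N,\bar v_N,\bar\eta^*_N)\,d\bar W_N,\bQ)\to\int_0^{\cdot}(G(\bar\bU,\bar\eta^*)\,d\bar W,\bQ)$ in probability in $C([0,T];\R)$. Passing to a further subsequence along which all these convergences hold $\bar\bP$-a.s.\ simultaneously, and combining with the deterministic limits, identifies every term of \eqref{approxsystem} with the corresponding term of \eqref{martingale1} for a.e.\ $t$; since both sides are continuous in $t$ ($\bar\bU$ having continuous paths in $\sV_1'$ and all the time integrals being continuous), \eqref{martingale1} holds for every $t\in[0,T]$, $\bar\bP$-a.s., and for every $\bQ\in\sD$, which completes the proof.
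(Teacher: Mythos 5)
Your proposal is correct and takes essentially the same route as the paper, which in fact gives no separate proof of Theorem \ref{exist1} but refers to the (slightly harder) proof of Theorem \ref{exist2}: your term-by-term passage to the limit in \eqref{approxsystem} using the Skorohod almost sure convergences, the uniform bounds of Lemma \ref{bounds}, the weak convergences \eqref{uweak}--\eqref{vweak}, and the convergence lemma for stochastic integrals with varying Wiener processes (as in Lemma \ref{conv_G}, via \cite{Ben}, \cite{DGHT} or \cite{BFH18}) is precisely that argument, simplified because the retained $\tfrac1\ep$-penalty term removes the need for the random divergence-free test-function construction of Step~1 of Theorem \ref{exist2}. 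The few points you gloss (adaptedness of $\bar\eta^*$ to $(\bar\sF_t)_{t\geq 0}$ and the identification of the limit of $\bar v^+_N$) are treated at the same level of detail in the paper itself.
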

	The proof of the following result is similar to and slightly simpler than the proof of Theorem \ref{exist2} and thus we refer the reader to the proof of Theorem \ref{exist2} for the details.
	
	{Before we proceed and consider the limit as the divergence-free penalty parameter $\ep \to 0$, we argue that 
		\begin{align}\label{etasequal1}
		\bar\eta^*(t)=\bar\eta(t) \quad \text{ for any } t<\tau, \quad \bar\bP-a.s.
		\end{align}	
		{ where $$	\tau  := T\wedge\inf\{t>0:\inf_{z\in[0,L]}(R+ \bar\eta(t,z))\leq \delta \text{ or } \|R+\bar\eta(t)\|_{H^s(0,L)}\geq \frac1{\delta} \}.$$}
		Indeed, to show that this is true, let us introduce the following stopping times.
		For $\frac32<s<2$ we define
		\begin{align*}
		\tau_N &:=T\wedge \inf\{t> 0:\inf_{z\in[0,L]}(R+ \bar\eta_N(t,z))\leq \delta \text{ or } \|R+\bar\eta_N(t)\|_{H^s(0,L)}\geq \frac1{\delta}\}.
		\end{align*}
	Then thanks to  \eqref{etaunif1}, $\tau \leq \liminf_{N\rightarrow\infty}\tau_N$ a.s. 
		Observe further that for almost any $\omega\in\bar\Omega$ and $t<\tau$, and for any $\epsilon>0$, there exists an $N$ such that
		\begin{align*}
		\|\bar\eta(t)-\bar\eta^*(t)\|_{H^s(0,L)}&<\|\bar\eta(t)-\bar\eta_N(t)\|_{H^s(0,L)}+\|\bar\eta^*_N(t)-\bar\eta_N(t)\|_{H^s(0,L)}+\|\bar\eta^*(t)-\bar\eta^*_N(t)\|_{H^s(0,L)}\\
		&<\epsilon.
		\end{align*}
		This is true because for any $\epsilon>0$ there exists an $N_1 \in \mathbb{N}$ such that the first and the last terms on the right side of the above inequality
		are each bounded by $\frac{\epsilon}2$ for any $N\geq N_1$ thanks to the uniform convergence \eqref{etaunif}.
		Furthermore, since $t<\tau_N$, for infinitely many $N$'s, the second term is equal to 0. Hence we conclude that indeed
		\begin{align*}
		\bar\eta^*(t)=\bar\eta(t) \quad \text{ for any } t<\tau, \quad \bar\bP-a.s.
		\end{align*}
	}
	\section{{Passing to the limit as $\ep \rightarrow 0$}}\label{sec:limit2}
	In this section, to emphasize the dependence on the parameter $\ep>0$, we will use the notation $(\bar \bu_\ep,\bar v_\ep,\bar v^*_\ep, \bar \eta_\ep,\bar\eta^*_\ep,\bar W_\ep)$ and $(\Omega^\ep,\sF^\ep,(\sF^\ep_t)_{t\geq 0},\bP^\ep)$ to denote the solution and the filtered probability space found in the previous section.  
	In what follows, we will pass $\ep \rightarrow 0$ in \eqref{martingale1} with appropriate test functions. Most of the results in the first half of this section can be proven as in the previous section. Hence we only summarize the important theorems without proof here.
	
	First observe that, thanks to the weak lower-semicontinuity of norm, the uniform estimates obtained in the previous section still hold. That is, as a consequence of Lemmas \ref{bounds} and Theorem \ref{skorohod}, we have the following {uniform boundedness} result. 
	\begin{lem}[Uniform boundedness]\label{boundsep}
		For a fixed $\delta>0$ we have for some $C>0$ {\bf independent of} $\ep$ that
		\begin{enumerate}
			\item $\bE^\ep\|\bar\bu_\ep\|^2_{L^\infty(0,T;\bL^2(\sO))\cap L^2(0,T;V)}<C.$
			\item $\bE^\ep\|\bar v_\ep\|^2_{L^\infty(0,T;L^2(0,L))\cap L^2(0,T;H^\frac12(0,L))}<C$.
			\item $\bE^\ep\|\bar v^*_\ep\|^2_{L^\infty(0,T;L^2(0,L))}<C$.
			\item $\bE^\ep\|\bar \eta_\ep\|^2_{L^\infty(0,T;H^2_0(0,L)\,\cap\, W^{1,\infty}(0,T;L^2(0,L)))}<C.$
			\item $\bE^\ep\|\bar \eta^*_\ep\|^2_{L^\infty(0,T;H^2_0(0,L)\,\cap\, W^{1,\infty}(0,T;L^2(0,L)))}<C.$
			\item $\bE^\ep\|\text{div}^{\bar\eta^*_\ep}\bar\bu_\ep\|^2_{L^2(0,T;L^2(\sO))} < C{\ep}$.
		\end{enumerate}
	\end{lem}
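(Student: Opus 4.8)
The plan is to obtain Lemma \ref{boundsep} as an almost immediate consequence of the uniform-in-$\ep$ estimates already established at the semidiscrete level in Theorem \ref{energythm} and Lemma \ref{bounds}, combined with the weak/weak-$*$ lower semicontinuity of norms along the sequences produced by the Skorohod construction in Theorem \ref{skorohod}. The key observation is that every constant appearing in Theorem \ref{energythm} was shown to be independent of both $N$ \emph{and} $\ep$, and that equality in distribution $\bar{\mathcal U}_N =^d \mathcal U_N$ (Theorem \ref{skorohod}(1)) transfers every such bound verbatim to the tilded variables on the new probability space $(\bar\Omega,\bar\sF,\bar\bP)$. Thus the strategy is: (i) restate the relevant uniform bounds on $\bar\bu_N,\bar v_N,\bar v^\#_N,\bar v^*_N,\bar\eta_N,\bar\eta^*_N$ and on $\tfrac1{\sqrt\ep}\,\mathrm{div}^{\bar\eta^*_N}\bar\bu^+_N$ coming from Lemma \ref{bounds}; (ii) pass to the limit $N\to\infty$ using the a.s.\ convergences of Theorem \ref{skorohod}(2) together with Fatou's lemma for the $\bE^\ep$-expectation and weak lower semicontinuity for the spatial/temporal norms; (iii) identify the limits via Theorem \ref{skorohod}(3)--(4) so that the bounds are expressed in terms of $\bar\bu_\ep,\bar v_\ep,\bar v^*_\ep,\bar\eta_\ep,\bar\eta^*_\ep$.

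Concretely, for statement (1): $\{\bar\bu^+_N\}$ is bounded in $L^2(\bar\Omega;L^\infty(0,T;\bL^2(\sO))\cap L^2(0,T;V))$ uniformly in $N,\ep$ by Lemma \ref{bounds}(3)--(4) and Theorem \ref{skorohod}(1); by \eqref{uweak} and the a.s.\ convergence $\bar\bu^+_N\to\bar\bu$, weak lower semicontinuity of the $L^2(0,T;V)$-norm and of the $L^\infty(0,T;\bL^2(\sO))$-norm, together with Fatou applied to $\bE^\ep[\,\cdot\,]$, give $\bE^\ep\|\bar\bu\|^2_{L^\infty(0,T;\bL^2)\cap L^2(0,T;V)}\le C$, and then $\bar\bu=\bar\bu_\ep$ on $(\Omega^\ep,\sF^\ep,\bP^\ep)$ is precisely the object relabeled in the statement. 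Statement (2) follows identically from Lemma \ref{bounds}(2),(5) (the $H^{1/2}(0,L)$-control of $v^+_N$ being the trace bound derived there). Statement (3) follows from Lemma \ref{bounds}(2) for $v^*_N$, using Theorem \ref{skorohod} and \eqref{vweak}. Statements (4)--(5) follow from Lemma \ref{bounds}(1) together with the a.s.\ convergences \eqref{etaunif1}, \eqref{etaunif} and Theorem \ref{skorohod}(4) (which identifies $\partial_t\bar\eta_\ep=\bar v_\ep$), giving the stated $L^\infty(0,T;H^2_0)\cap W^{1,\infty}(0,T;L^2)$ bound. Finally, statement (6) is the crucial one for the $\ep\to 0$ argument: Lemma \ref{bounds}(6) gives $\bE\,\|\tfrac1{\sqrt\ep}\mathrm{div}^{\eta^*_N}\bu^+_N\|^2_{L^2(0,T;L^2(\sO))}\le C$ with $C$ independent of $N$ and $\ep$; transferring to the barred variables, passing $N\to\infty$ with weak lower semicontinuity in $L^2((0,T)\times\sO)$ (using that $\bar\eta^*_N\to\bar\eta^*$ a.s.\ in $L^\infty(0,T;C^1)$ so that $\mathrm{div}^{\bar\eta^*_N}\bar\bu^+_N\rightharpoonup \mathrm{div}^{\bar\eta^*}\bar\bu$ weakly in $L^2$), and applying Fatou, we get $\bE^\ep\|\tfrac1{\sqrt\ep}\mathrm{div}^{\bar\eta^*_\ep}\bar\bu_\ep\|^2_{L^2(0,T;L^2(\sO))}\le C$, i.e.\ $\bE^\ep\|\mathrm{div}^{\bar\eta^*_\ep}\bar\bu_\ep\|^2_{L^2(0,T;L^2(\sO))}\le C\ep$.

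The one point that requires a little care — and the closest thing here to a genuine obstacle — is the limit identification needed in (6): one must check that the \emph{product} $\mathrm{div}^{\bar\eta^*_N}\bar\bu^+_N$ converges to $\mathrm{div}^{\bar\eta^*}\bar\bu$ in the sense of distributions (hence, being bounded in $L^2$, weakly in $L^2$), which follows because $\bar\bu^+_N\rightharpoonup\bar\bu$ weakly in $L^2(0,T;V)$ while the coefficients of $\nabla^{\bar\eta^*_N}$, namely $r\,\partial_z\bar\eta^*_N/(R+\bar\eta^*_N)$ and $1/(R+\bar\eta^*_N)$, converge \emph{strongly} in $L^\infty((0,T)\times\sO)$ thanks to \eqref{etaunif} and the deterministic lower bound $R+\bar\eta^*_N\ge\delta$; a weak-times-strong product-convergence argument then applies. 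Everything else is bookkeeping: there is no new estimate to prove, only propagation of existing $N$- and $\ep$-uniform bounds through a.s.\ limits via Fatou and weak lower semicontinuity. For this reason the lemma is stated without proof in the manuscript and the proof is deferred to (or absorbed into) the more substantial $\ep\to 0$ passage that follows, where the nontrivial compactness work actually takes place.
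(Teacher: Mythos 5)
Your proposal is correct and follows essentially the same route as the paper, which states the lemma without a detailed proof precisely because it is "a consequence of Lemma \ref{bounds} and Theorem \ref{skorohod}" via weak lower-semicontinuity of norms: the $N$- and $\ep$-uniform semidiscrete bounds are transferred to the barred variables by equality in law and then propagated to the limits $(\bar\bu_\ep,\bar v_\ep,\bar v^*_\ep,\bar\eta_\ep,\bar\eta^*_\ep)$ using the a.s.\ convergences, Fatou's lemma, and lower semicontinuity. Your extra remark on the weak-times-strong identification of $\text{div}^{\bar\eta^*_N}\bar\bu^+_N\rightharpoonup\text{div}^{\bar\eta^*}\bar\bu$ (using \eqref{etaunif} and $R+\bar\eta^*_N\ge\delta$) is exactly the justification the paper leaves implicit for item (6).
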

	Recall that the bounds obtained in the proofs of Lemmas \ref{tightuv} and \ref{tightl2} were independent of $\ep$. Hence the same results still hold true and we obtain the following lemma.
	\begin{lem}[Tightness of the laws]\label{tightep}
		\begin{enumerate}
			\item The laws of $\bar\bu_\ep$ and $\bar v_\ep$ are tight in $L^2(0,T;\bH^\alpha(\sO))$ for any $0\leq \alpha<1$ and $ L^2(0,T;L^2(0,L))$ respectively.
			\item The laws of $\bar \eta_\ep$ and that of $\bar \eta^*_\ep$ are tight in $C([0,T];H^s(0,L))$ for $\frac32<s<2$.
			\item The laws of $\|\bar \bu_\ep\|_{L^2(0,T;V)}$ are tight in $\mathbb{R}$.
			\item The laws of $\|\bar v^*_\ep\|_{L^2(0,T;L^2(0,L))}$ are tight in $\mathbb{R}$.
		\end{enumerate}
	\end{lem}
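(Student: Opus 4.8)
The plan is to obtain Lemma~\ref{tightep} essentially for free from the uniform bounds of Lemma~\ref{boundsep} together with the observation, already made in the excerpt just above the statement, that the compactness/tightness machinery developed in Lemmas~\ref{tightuv} and \ref{tightl2} produced estimates whose constants are independent of both $N$ and $\ep$. Since the weak limits $(\bar\bu_\ep,\bar v_\ep,\bar\eta_\ep,\bar\eta_\ep^*)$ inherit the bounds of the approximate sequences by weak lower-semicontinuity, all the quantitative ingredients of those two lemmas survive the passage $N\to\infty$, so the same Chebyshev-inequality arguments apply verbatim with $N$ replaced by $\ep$.

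Concretely, for part (1) I would reuse the compact set $\sB_M\subset L^2(0,T;\bH^\alpha(\sO))\times L^2(0,T;L^2(0,L))$ from the proof of Lemma~\ref{tightuv}, recalling that its compactness follows from Lemma~\ref{compactLp} with $\mathcal{Y}_0=\bH^1(\sO)\times H^{1/2}(0,L)$, $\mathcal{Y}=\bH^\alpha(\sO)\times L^2(0,L)$, $\mathcal{Y}_1=\bL^2(\sO)\times L^2(0,L)$. I would then check that $(\bar\bu_\ep,\bar v_\ep)$ satisfies the same expectation bounds: the $L^2(0,T;\bH^1(\sO))\times L^2(0,T;H^{1/2}(0,L))$ bound comes from Lemma~\ref{boundsep}(1)--(2), and the fractional-time-shift bound $\sup_{0<h<1}h^{-1/32}\int_h^T(\|T_h\bar\bu_\ep-\bar\bu_\ep\|^2_{\bL^2(\sO)}+\|T_h\bar v_\ep-\bar v_\ep\|^2_{L^2(0,L)})$ is controlled, uniformly in $\ep$, because the corresponding estimates in the proof of Lemma~\ref{tightuv} for $\bu_N^+,v_N^+$ had $\ep$-independent constants (this is where Theorem~\ref{energythm}(2) and the $\ep$-independence of $I_4$ were crucial) and pass to the weak limit. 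Chebyshev's inequality then gives $\bP^\ep((\bar\bu_\ep,\bar v_\ep)\notin\sB_M)\le C/\sqrt{M}$ with $C$ independent of $\ep$, hence tightness.

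For parts (2), (3), (4) I would argue identically using the sets $\mathcal{K}_M$ and the real-line Chebyshev estimates from the proof of Lemma~\ref{tightl2}: the laws of $\bar\eta_\ep,\bar\eta_\ep^*$ are tight in $C([0,T];H^s(0,L))$, $\tfrac32<s<2$, via the Aubin--Lions embedding $L^\infty(0,T;H^2_0(0,L))\cap W^{1,\infty}(0,T;L^2(0,L))\hookrightarrow\hookrightarrow C([0,T];H^s(0,L))$ together with Lemma~\ref{boundsep}(4)--(5); and the laws of the real-valued random variables $\|\bar\bu_\ep\|_{L^2(0,T;V)}$ and $\|\bar v_\ep^*\|_{L^2(0,T;L^2(0,L))}$ are tight in $\mathbb{R}$ by Lemma~\ref{boundsep}(1),(3) and Chebyshev. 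In each case one only needs to replace the index $N$ by $\ep$ and cite the relevant part of Lemma~\ref{boundsep} in place of Theorem~\ref{energythm}/Lemma~\ref{bounds}.

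The only genuine point requiring care—and the step I expect to be the main (mild) obstacle—is the bookkeeping needed to be sure that \emph{every} constant appearing in the time-shift estimates $I_1,\dots,I_8$ and in the stochastic term from the proof of Lemma~\ref{tightuv} was truly independent of $\ep$, so that those inequalities genuinely transfer to the weak limits; the delicate one is $I_4$, the penalty term, where one must invoke Theorem~\ref{energythm}(2) (uniform bound on $\bE\sum D^n$, hence on $\frac{\Delta t}{\ep}\sum\|\mathrm{div}^{\eta_*^n}\bu^{n+1}\|^2$) rather than any $\ep$-dependent bound such as the one used for $I_{2,3}$ in Lemma~\ref{uvtightC}. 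Once this is checked, the proof is a routine repetition of the earlier arguments, so I would write it briefly: restate the compact sets, note the bounds of Lemma~\ref{boundsep} and the $\ep$-uniformity of the earlier time-shift estimates, apply Chebyshev, and conclude.
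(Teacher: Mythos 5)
Your proposal is correct and is essentially the paper's own argument: the paper gives no separate proof of Lemma~\ref{tightep}, simply noting that the bounds in the proofs of Lemmas~\ref{tightuv} and \ref{tightl2} (in particular the $\ep$-independent control of the penalty contribution via Theorem~\ref{energythm}(2)) are independent of $\ep$, so the same compact sets and Chebyshev estimates carry over to $(\bar\bu_\ep,\bar v_\ep,\bar\eta_\ep,\bar\eta^*_\ep)$, exactly as you describe. Your extra care about transferring the time-shift bounds to the limits (via equality of laws/weak lower semicontinuity, or equivalently portmanteau on the closed sets $\sB_M$, $\mathcal{K}_M$) only makes explicit what the paper leaves implicit.
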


	Now for an infinite denumerable set of indices $\Lambda$, we let 
	$\mu_{\ep}$ be the law of the random variable $\bar{\mathcal{U}}_{\ep}:=(
	\bar{\bu}_{\ep},\bar v_{\ep},\bar\eta_{\ep}, \bar\eta^*_{\ep}, 
	\|\bar \bu_\ep\|_{L^2(0,T;V)},\|\bar v^{*}_{\ep}\|_{L^2(0,T;L^2(0,L))},
	\bar{W}_\ep)$
	taking values in the phase space
	\begin{align*}
	\sS:=L^2(0,T;\bH^{\frac34}(\sO))\times L^2(0,T; L^2(0,L)) \times[ C([0,T],H^s(0,L))]^2\times \mathbb{R}^2\times
	C([0,T];U),
	\end{align*}
	for $\frac32<s<2$.
	
	Then tightness of $\mu_\ep$ on $\sS$ and an application of the Prohorov theorem and the Skorohod representation theorem gives us the following almost sure representation and convergence.
	\begin{theorem}[Almost sure convergence in $\ep$]\label{skorohod2} There exists a probability space $(\hat\Omega,\hat\sF,\hat\bP)$ and random variables $\hat{\mathcal{U}}_{\ep}=(
		\hat{\bu}_{\ep},\hat v_{\ep},\hat\eta_{\ep}, \hat\eta^*_{\ep}, m_\ep,k_\ep,
		\hat{W}_\ep)$ and
		$\hat{\mathcal{U}}=(
		\hat{\bu},\hat v,\hat\eta, \hat\eta^*,m,k,
		\hat{W})$
		such that
		\begin{enumerate}
			\item $\hat{\mathcal{U}}_{\ep}=^d\bar{\mathcal{U}}_{\ep}$ for every $\ep \in \Lambda$.
			\item $\hat{\mathcal{U}}_{\ep} \rightarrow \hat{\mathcal{U}}$ $\hat\bP$-a.s. in the topology of $\sS$ as $\ep\rightarrow 0$.
			\item $\partial_t\hat\eta=\hat v$ and $\partial_t\hat\eta^*=\hat v^*$, in the sense of distributions, almost surely.
		\end{enumerate}
	\end{theorem}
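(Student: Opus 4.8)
The plan is to follow verbatim the two-step scheme already used for Theorem~\ref{skorohod}: first establish tightness of the joint laws $\mu_\ep$ of $\bar{\mathcal{U}}_\ep$ on the Polish space $\sS$, and then apply Prohorov's theorem together with the Skorohod representation theorem to produce the new probability space and the almost surely convergent copies.

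For the tightness step I would simply assemble the component-wise statements of Lemma~\ref{tightep}: the laws of $\bar\bu_\ep$ are tight in $L^2(0,T;\bH^{3/4}(\sO))$ (the case $\alpha=3/4$ of Lemma~\ref{tightep}(1)), those of $\bar v_\ep$ in $L^2(0,T;L^2(0,L))$, those of $\bar\eta_\ep$ and $\bar\eta^*_\ep$ in $C([0,T];H^s(0,L))$ for $\tfrac32<s<2$, and those of the real-valued variables $\|\bar\bu_\ep\|_{L^2(0,T;V)}$ and $\|\bar v^*_\ep\|_{L^2(0,T;L^2(0,L))}$ in $\R$. Since the covariance of the Wiener process is $\ep$-independent, the laws $\bP^\ep(\bar W_\ep\in\cdot)$ form a constant, hence trivially tight, family on $C([0,T];U)$. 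As $\sS$ is a finite product of Polish spaces, Tychonoff's theorem gives tightness of $\{\mu_\ep\}_{\ep\in\Lambda}$ on $\sS$.

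Next I would invoke Prohorov's theorem to extract a weakly convergent subsequence of $\{\mu_\ep\}$ and then apply the version of the Skorohod representation theorem from \cite{VW96} (Theorem~1.10.4), exactly as in the proof of Theorem~\ref{skorohod}. This yields a probability space $(\hat\Omega,\hat\sF,\hat\bP)$, measurable maps $\phi_\ep:\hat\Omega\to\Omega^\ep$ with $\hat\bP\circ\phi_\ep^{-1}=\bP^\ep$, and random variables $\hat{\mathcal{U}}_\ep=\bar{\mathcal{U}}_\ep\circ\phi_\ep$ and $\hat{\mathcal{U}}$ such that $\hat{\mathcal{U}}_\ep=^d\bar{\mathcal{U}}_\ep$ and $\hat{\mathcal{U}}_\ep\to\hat{\mathcal{U}}$ $\hat\bP$-a.s. in the topology of $\sS$; this is statements (1) and (2). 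Using the explicit identity $\hat{\mathcal{U}}_\ep=\bar{\mathcal{U}}_\ep\circ\phi_\ep$ one identifies $m_\ep=\|\hat\bu_\ep\|_{L^2(0,T;V)}$ and $k_\ep=\|\hat v^*_\ep\|_{L^2(0,T;L^2(0,L))}$, where $\hat v^*_\ep:=\partial_t\hat\eta^*_\ep$ is well defined and bounded in $L^2(0,T;L^2(0,L))$ by Lemma~\ref{boundsep}(5); the a.s.\ convergence of $m_\ep$, $k_\ep$ then furnishes, along a further subsequence, weak limits $\hat\bu_\ep\rightharpoonup\hat\bu$ in $L^2(0,T;V)$ and $\hat v^*_\ep\rightharpoonup\hat v^*$ in $L^2(0,T;L^2(0,L))$, $\hat\bP$-a.s., with $\hat v^*$ the object named in (3). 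For statement (3) itself I would transfer, via $\phi_\ep$, the distributional identities $\partial_t\bar\eta_\ep=\bar v_\ep$ and $\partial_t\bar\eta^*_\ep=\bar v^*_\ep$ (which hold from the previous section), test against $\phi\in C_0^1(0,T;L^2(0,L))$, integrate by parts, and pass $\ep\to 0$ using the strong convergence of $\hat\eta_\ep,\hat\eta^*_\ep$ in $C([0,T];H^s(0,L))$ together with the weak convergence of $\hat v_\ep$ and $\hat v^*_\ep$, obtaining $\partial_t\hat\eta=\hat v$ and $\partial_t\hat\eta^*=\hat v^*$ in the sense of distributions, almost surely.

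The statement itself carries no serious analytical obstacle — it is a mechanical repetition of the machinery behind Theorem~\ref{skorohod}. The only point demanding care is bookkeeping: the limit $\hat v^*$ appearing in (3) is not one of the coordinates of $\sS$ and must be recovered as the weak limit of $\partial_t\hat\eta^*_\ep$, which is legitimate precisely because the uniform $W^{1,\infty}(0,T;L^2(0,L))$ bound in Lemma~\ref{boundsep}(5) passes through the Skorohod transfer; similarly one must make sure the further subsequence used to extract the weak limits of $\hat\bu_\ep$ and $\hat v^*_\ep$ is chosen $\hat\bP$-a.s.\ consistently, as in the argument for \eqref{uweak}--\eqref{vweak}.
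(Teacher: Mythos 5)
Your proposal is correct and follows essentially the same route as the paper: joint tightness on the product space $\sS$ via Lemma~\ref{tightep} (with the constant Gaussian law of the Wiener processes and Tychonoff), then Prohorov plus the Skorohod representation theorem of \cite{VW96} for statements (1)--(2), and the same transfer-by-$\phi_\ep$, integrate-by-parts, pass-to-the-limit argument (using the $C([0,T];H^s)$ convergence of the displacements and the a.s.\ weak convergence of $\hat v_\ep$, $\hat v^*_\ep=\partial_t\hat\eta^*_\ep$ guaranteed by the transferred uniform bounds) for statement (3), exactly as in the proof of Theorem~\ref{skorohod} and the discussion surrounding \eqref{uweak2}--\eqref{vweak2}.
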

	
	To pass to the limit as $\ep \to 0$ we will need stronger convergence of the fluid velocity random variables to compensate for the fact that our construction of		test functions presented below in \eqref{q_ep} does not lead to uniform convergence of the test functions as $\ep\to 0$. See \eqref{convq}-\eqref{convqt}.		For this purpose we start by recalling 
	 again that Theorem 1.10.4 in \cite{VW96} implies that the random variables $\hat{\mathcal{U}}_\ep$ can be chosen such that for every $\ep \in \Lambda$,
	\begin{align}\label{newrv2}
	\hat{\mathcal{U}}_\ep(\omega)=\bar{\mathcal{U}}_\ep(\phi_\ep(\omega)), \quad \omega \in \hat\Omega,
	\end{align} and $\hat\bP\circ\phi_\ep^{-1}=\bP^\ep$, where $\phi_\ep:\hat\Omega\rightarrow \Omega^\ep$ is measurable. 
	
	Thanks to these explicit maps we identify the real-valued random variables $m_\ep$ as $m_\ep= \|\hat \bu_\ep\|_{L^2(0,T;V)}$ and notice that $m_\ep$ converge almost surely due to  Theorem~\ref{skorohod2}.}
The almost sure convergence of $m_\ep$ implies that $\|\hat \bu_\ep\|_{L^2(0,T;V)}$ is bounded a.s. and thus also that, up to a subsequence,
\begin{align}\label{uweak2}
\hat \bu_\ep \rightharpoonup \hat\bu \quad\text{ weakly in } {L^2(0,T;V)} \quad \hat\bP-a.s.
\end{align}
Similarly,
\begin{align}\label{vweak2}
	\hat v^*_\ep \rightharpoonup \hat v^* \quad\text{ weakly in } {L^2(0,T;L^2(0,L))} \quad \hat\bP-a.s.
\end{align}

Observe also that, since $\sO$ is a Lipschitz domain, the space ${\bH^\gamma(\sO)}$ is continuously embedded in ${\bL^q(\sO)}$  for any $q\in[1,\frac2{1-\gamma}]$ and  $\gamma<1$ and thus statement (2) implies that
\begin{align}\label{ul4}
\hat\bu_\ep \rightarrow \hat\bu \quad \text{in } L^2(0,T;\bL^4(\sO)), \quad \hat\bP-a.s.
\end{align}
As mentioned earlier the convergence results  \eqref{uweak2} and \eqref{ul4} will be used in passing to the limit as $\ep \to 0$ in the weak formulation
in conjunction with the convergence of the test functions given below in  \eqref{convq}-\eqref{convqt}.
{Next, we discuss additional regularity and convergence properties for the random variables discussed in Theorem~\ref{skorohod2},
	which we will use to pass to the limit as $\ep\to 0$.}

{First, we notice that equivalence of laws} implies that for some $C>0$ independent of $\ep$,
\begin{equation}\label{uniformep}
\begin{split}
&\hat\bE\|\hat\bu_\ep\|^2_{L^\infty(0,T;\bL^2(\sO))\cap L^2(0,T;V)}<C.\\
&	\hat\bE\|\hat v_\ep\|^2_{L^\infty(0,T;L^2(0,L))}<C,\quad \hat\bE\|\hat v^*_\ep\|^2_{L^\infty(0,T;L^2(0,L))}<C.\\
&\hat\bE\|\hat \eta_\ep\|^2_{L^\infty(0,T;H^2_0(0,L))\,\cap\, W^{1,\infty}(0,T;L^2(0,L))}<C,\quad \hat\bE\|\hat \eta^*_\ep\|^2_{L^\infty(0,T;H^2_0(0,L))\,\cap\, W^{1,\infty}(0,T;L^2(0,L))}<C.\\
&\hat\bE\|\text{div}^{\hat\eta^*_\ep}\hat\bu_\ep\|^2_{L^2(0,T;L^2(\sO))} < C{\ep}.
\end{split}\end{equation}
{Furthermore, for the limiting functions we have}
\begin{align}\label{sols2}
&\hat \bu \in L^2(\hat\Omega;L^\infty(0,T;\bL^2(\sO)))\cap L^2(\hat\Omega;L^2(0,T;V)),\notag\\
&	\hat v, \hat v^* \in L^2(\hat\Omega;L^\infty(0,T;L^2(0,L))),\\
&\hat {\eta},	\hat \eta^* \in L^2(\hat\Omega;L^\infty(0,T;H^2_0(0,L)))\cap L^2(\hat\Omega;W^{1,\infty}(0,T;L^2(0,L))).\notag
	\end{align}
{Now, notice that \eqref{uniformep}$_4$} implies that, up to a subsequence, we have
\begin{align}\label{div0}
\text{div}^{\hat\eta^*_\ep}\hat\bu_\ep \rightarrow 0 \quad \text{ in } L^2(0,T;L^2(\sO)), \quad \hat\bP- a.s.
\end{align}
Furthermore, as shown in the previous section, we also have that
 \begin{align}\label{etaunif2}
&\hat\eta_{\ep} \rightarrow \hat\eta \text{ and } \hat\eta^*_{\ep} \rightarrow \hat\eta^* \,\,\,\text { in }  L^\infty(0,T;H^s(0,L)) \text{ and } L^\infty(0,T;C^1([0,L]))\,\quad\hat\bP-a.s.
\end{align}
and, for given $\frac32<s<2$ {we have that}
\begin{align}\label{etastarbound}
\inf_{t\in[0,T],z\in[0,L]}(R+ \hat\eta^*(t,z))\geq \delta \,\,\,\text{ and }\,\,\, \|R+\hat\eta^*\|_{L^\infty(0,T;H^s(0,L))}\leq \frac1{\delta},\quad \hat\bP- a.s.
\end{align}
Finally, we note using \eqref{newrv2} that $(\hat\bu_\ep,\hat v_\ep,\hat v_\ep^*,\hat\eta_\ep,\hat\eta^*_\ep,\hat W_\ep)$ solve 
$\hat\bP-$a.s.  the following weak formulation:
\begin{equation}\begin{split}\label{approxsystem2}
&(	(R+\hat{\eta}_\ep ^*(t))\hat\bu_\ep (t),\bq)+(\partial_t\hat{ \eta}_\ep(t),\psi)
=(\bu_0(R+\eta_0),\bq)+( v_0,\psi)\\
&-\int_0^t\int_0^L\partial_z\hat\eta_\ep \partial_z\psi+\partial_{zz}\hat\eta_\ep \partial_{zz}\psi 
+\frac12\int_0^t\int_{\sO}\partial_s\hat{\eta}_\ep ^*\hat{\bu}_\ep \cdot\bq \\
&-\frac12\int_0^t\int_{\sO}(R+\hat\eta_\ep ^*)((\hat\bu_\ep-\partial_t\hat {\eta}_\ep
r\be_r)\cdot\nabla^{\hat\eta ^*_\ep}\hat\bu_\ep \cdot\bq
- (\hat\bu_\ep-\partial_t\hat {\eta} _\ep
r\be_r)\cdot\nabla^{\hat\eta_\ep ^*}\bq\cdot\hat\bu_\ep )\\
&-2\nu\int_0^t\int_{\sO}(R+\hat{\eta}_\ep ^*) \bD^{\hat\eta_\ep ^*}(\hat\bu_\ep )\cdot \bD^{\hat\eta_\ep ^*}(\bq)  +\frac1{\ep}\int_0^t \int_\sO   \text{div}^{\hat\eta_\ep^*}\hat\bu_\ep\text{ div}^{\hat\eta_\ep^*}\bq\\
&+ 
\int_0^t\left( P_{{in}}\int_{0}^1q_z\Big|_{z=0}-P_{{out}}\int_{0}^1q_z\Big|_{z=1}\right)
+\int_0^t
(	G(\hat\bu_\ep,\hat v_\ep ,\hat\eta_\ep^*)d\hat W_\ep, \bQ),
\end{split}
\end{equation}
for every $t\in[0,T]$ and for every $\bQ=(\bq,\psi)\in\sD$.

{Before we can pass to the limit as $\ep\to 0$ in \eqref{approxsystem2}, 
	we have one more obstacle to deal with. Namely,}
observe that the candidate solution for fluid and structure velocities, $\hat\bU=(\hat\bu,\hat v)$, is not regular enough to be a stochastic process in the classical sense as it only belongs to the space $L^2(0,T;\bL^2(\sO))\times L^2(0,T;L^2(0,L))$ (note that the equivalent of Lemma \ref{uvtightC} does not apply). Hence  we need to be
careful in our construction of an appropriate filtration. For this purpose we construct an appropriate filtration as follows. 
	Define the $\sigma-$fields
$$
 \sigma_t(\hat\bU):=\bigcap_{s\geq t}\sigma\left(\bigcup_{\bQ\in C^\infty_0((0,s);\sD)}\{(\hat\bU,\bQ)<1\}\cup \mathcal{N} \right),
$$
 where $N=\{\mathcal{A}\in \bar{\mathcal{F}} \ |  \hat\bP(\mathcal{A})=0\}.$\\
Let $\hat\sF_t'$ be the $\sigma-$ field generated by the random variables $\hat\eta(s),\hat{W}(s)$ for all $0\leq s \leq t$.
Then we define
\begin{align}\label{Ft1}
\hat{\mathcal{F}}^0_t:=\bigcap_{s\ge t}\sigma(\hat{\sF}_s' \cup \mathcal{N}),\qquad
\hat{\mathcal{F}}_t :=\sigma(\sigma_t(\hat\bU)\cup \hat{\mathcal{F}}^0_t).
\end{align}
This gives a complete, right-continuous filtration $(\hat{\mathcal{F}}_t)_{t \geq 0}$, on the probability space $(\hat\Omega,\hat{\mathcal{F}},\hat\bP)$, to which the noise processes and candidate solutions are adapted. Construction of filtration $(\hat\sF^\ep_t)_{t\geq0}$ is postponed to later; see \eqref{Ftep}. Now we state the following result from \cite{BFH18}.
\begin{lem}\label{representative}
	There exists a stochastic process in $
	L^2(0,T;\bL^2(\sO))\times L^2(0,T; L^2(0,L))$ a.s. which is a $(\hat\sF_t)_{t\geq 0}$-progressively measurable representative of $\hat\bU=(\hat\bu, \hat v)$.
\end{lem}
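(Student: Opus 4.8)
The plan is to obtain the representative as a limit of time-mollifications of $\hat\bU$, which is essentially the construction used in \cite{BFH18} for random distributions. First I would record that, by \eqref{sols2}, $\hat\bU=(\hat\bu,\hat v)$ is $\hat\bP$-a.s. an element of $L^2(0,T;\bL^2)$ with $\bL^2=\bL^2(\sO)\times L^2(0,L)$, hence in particular $\hat\bP$-a.s. Bochner integrable on $[0,T]$. Extending $\hat\bU(\cdot,\omega)$ by zero for negative times, for $\kappa>0$ I would introduce the Steklov average
$$
\tilde\bU_\kappa(t):=\frac1\kappa\int_{t-\kappa}^{t}\hat\bU(s)\,ds,\qquad t\in[0,T].
$$
For $\hat\bP$-a.e. $\omega$ the path $t\mapsto\tilde\bU_\kappa(t,\omega)$ is (absolutely) continuous with values in $\bL^2$, and since it depends only on the restriction of $\hat\bU(\cdot,\omega)$ to $[0,t]$, the $\bL^2$-valued random variable $\tilde\bU_\kappa(t)$ is $\sigma_t(\hat\bU)$-measurable. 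Here one has to observe that, by rescaling the test functions $\bQ$ and taking countable unions and intersections, the $\sigma$-field $\sigma\big(\bigcup_{\bQ\in C^\infty_0((0,s);\sD)}\{(\hat\bU,\bQ)<1\}\big)$ coincides with the $\sigma$-field generated by all pairings $(\hat\bU,\bQ)$, $\bQ\in C_0^\infty((0,s);\sD)$, so that the time-localized average is indeed measurable with respect to it. Since $\sigma_t(\hat\bU)\subset\hat\sF_t$ (cf.\ \eqref{Ft1}) and the paths are continuous, each $\tilde\bU_\kappa$ is $(\hat\sF_t)_{t\geq 0}$-progressively measurable.

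Next I would pass $\kappa\to 0$ along a sequence $\kappa_n\downarrow 0$. By the Lebesgue differentiation theorem applied pathwise to the $L^1(0,T;\bL^2)$ map $\hat\bU(\cdot,\omega)$, one has $\tilde\bU_{\kappa_n}(t,\omega)\to\hat\bU(t,\omega)$ in $\bL^2$ for a.e.\ $t\in[0,T]$, for $\hat\bP$-a.e.\ $\omega$, and moreover $\tilde\bU_{\kappa_n}\to\hat\bU$ in $L^2(0,T;\bL^2)$ for $\hat\bP$-a.e.\ $\omega$ by continuity of translations in $L^2$. I would then set
$$
\tilde\bU(t,\omega):=
\begin{cases}
\lim_{n\to\infty}\tilde\bU_{\kappa_n}(t,\omega) & \text{if this limit exists in }\bL^2,\\
0 & \text{otherwise.}
\end{cases}
$$
Being a pointwise (in $(t,\omega)$) limit of $(\hat\sF_t)_{t\geq0}$-progressively measurable processes, $\tilde\bU$ is itself $(\hat\sF_t)_{t\geq0}$-progressively measurable; and by the previous step $\tilde\bU(\cdot,\omega)=\hat\bU(\cdot,\omega)$ for a.e.\ $t\in[0,T]$, $\hat\bP$-a.s., so $\tilde\bU\in L^2(0,T;\bL^2)$ a.s.\ and it is the sought representative of $\hat\bU$.

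The main obstacle I expect is not analytic but measurability bookkeeping: one must check carefully that the filtration $(\hat\sF_t)_{t\geq 0}$ from \eqref{Ft1} — in particular the component $\sigma_t(\hat\bU)$, which is built only from the pairings of $\hat\bU$ with test functions supported in $(0,s)$, $s>t$ — is simultaneously large enough that the time-localized mollifications $\tilde\bU_\kappa(t)$ are adapted, and small enough that it is genuinely generated by $\hat\bU$ (together with $\hat\eta$, $\hat W$ and the null sets), so that the resulting $\tilde\bU$ is a true representative rather than living on an enlargement. This is exactly the delicate point addressed in \cite{BFH18}; once it is granted, the Steklov-averaging argument above applies verbatim, which is why we simply invoke that reference.
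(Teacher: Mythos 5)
Your construction is correct, and it coincides with the approach the paper relies on: the paper gives no proof of this lemma beyond invoking \cite{BFH18}, and your Steklov-averaging argument (adaptedness of the time-mollified process through pairings with test functions supported in $(0,t)$, noting that the generating sets $\{(\hat\bU,\bQ)<1\}$ recover all pairings by rescaling and countable operations, continuity of the mollified paths, and passage to the limit at Lebesgue points) is precisely the standard proof of that cited result. The only step worth making explicit is that the set of $(t,\omega)$ where the limit of $\tilde\bU_{\kappa_n}$ exists is itself progressively measurable (a Cauchy-type set built from the progressively measurable $\tilde\bU_{\kappa_n}$), so setting the process to $0$ off this set preserves progressive measurability, as you implicitly use.
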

\begin{remark}\label{rem:integral} 
	Observe that thanks to Lemma \ref{representative} and continuity of the coefficient $G$, we can deduce that $G(\hat{\bu}(s),\hat v(s),\hat\eta^*(s))$ is $\{\hat{\sF}_t\}_{t\geq 0}-$progressively measurable. This measurability property along with the growth assumptions \eqref{growthG}  
	implies that $\int_0^t(G(\hat{\bu}(s),\hat v(s),\hat\eta^*(s))d\hat{W}(s),\bQ(s))$ is a well-defined stochastic integral for any $(\hat\sF_t)_{t \geq 0}$-adapted continuous $\sD$-valued process $\bQ$ (see e.g. \cite{DaPrato} or Lemma 2.4.2 in \cite{Prevot}),
	where $\sD$ is defined in \eqref{Dspace}.
\end{remark}
{We are now ready to state the main result of this section, which shows that the limiting stochastic processes $(\hat\bu_\ep,\hat\eta_\ep,\hat W_\ep)$ for each fixed $\delta > 0$, converge to a martingale solution of our stochastic FSI problem, as $\ep\to 0$.}

{ The following theorem is a key intermediate step in establishing the final existence result as it shows that the processes $(\hat\bu, \hat\eta,\hat\eta^*)$, obtained in Theorem \ref{skorohod2}, solve the weak formulation \eqref{weaksol} for almost every time $t\in [0,T]$, except with $\hat\eta$ replaced by the artificial structure random variable $\hat\eta^*$ in {the appropriate} terms. This implies that we obtain the desired martingale solution in the sense of Definition \ref{def:martingale} for as long as $\hat\eta$ and $\hat\eta^*$ are equal. }
\begin{theorem}[Main convergence result as $\ep \rightarrow 0$]\label{exist2} 
	For any fixed $\delta>0$, {the stochastic processes $(\hat \bu,\hat \eta,\hat\eta^*)$ constructed in Theorem~\ref{skorohod2}}
	satisfy
	\begin{equation}\begin{split}\label{martingale2}
	&(	(R+\hat{\eta} ^*(t))\hat\bu (t),\bq(t))+(\partial_t\hat{ \eta}(t),\psi(t))
	=(\bu_0(R+\eta_0),\bq(0))+( v_0,\psi(0))\\
	&+\int_0^t\int_\sO(R+{\hat\eta}^*)\hat\bu\cdot \partial_t{\bq} +\int_0^t\int_0^L \partial_t\hat \eta \,\partial_t{\psi}
	+\frac12\int_0^t\int_{\sO}\partial_t\hat {\eta} ^*\hat{\bu} \cdot\bq\\
	&-\int_0^t\int_0^L\partial_z\hat\eta \partial_z\psi+\partial_{zz}\hat\eta \partial_{zz}\psi 
	-2\nu\int_0^t\int_{\sO}(R+\hat{\eta} ^*) \bD^{\hat\eta ^*}(\hat\bu )\cdot \bD^{\hat\eta ^*}(\bq)   \\
	&-\frac12\int_0^t\int_{\sO}(R+\hat\eta ^*)((\hat\bu-\partial_t\hat {\eta}
	r\be_r)\cdot\nabla^{\hat\eta ^*}\hat\bu \cdot\bq
	- (\hat\bu^{}-\partial_t\hat {\eta} 
	r\be_r)\cdot\nabla^{\hat\eta ^*}\bq\cdot\hat\bu )\\
	&+ 
	\int_0^t\left( P_{{in}}\int_{0}^1q_z\Big|_{z=0}dr-P_{{out}}\int_{0}^1q_z\Big|_{z=1}dr\right)
	ds +\int_0^t
	(	G(\hat\bu,\hat v ,\hat\eta^*)d\hat W , \bQ)  ,
	\end{split}
	\end{equation}
	$\hat\bP$-a.s. for almost every $t\in[0,T]$ and for any $(\hat\sF_t)_{t \geq 0}-$adapted process $\bQ=(\bq,\psi)$ with continuous paths in $\sD$ such that div$^{\hat\eta^*}\bq=0$. 
\end{theorem}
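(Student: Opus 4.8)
The plan is to obtain \eqref{martingale2} by passing to the limit $\ep\to0$ in the penalized weak formulation \eqref{approxsystem2}, tested against a family of approximate test processes $\bQ_\ep=(\bq_\ep,\psi_\ep)$ built from the given $\bQ$. The construction of $\bQ_\ep$ is the heart of the matter. Given an $(\hat\sF_t)_{t\ge0}$-adapted $\bQ=(\bq,\psi)$ with continuous paths in $\sD$ and $\text{div}^{\hat\eta^*}\bq=0$, I would first transport $\bq$ onto the physical domain $\sO_{\hat\eta^*}(t)$ via $(A^\omega_{\hat\eta^*}(t))^{-1}$, obtaining a genuinely divergence-free field there; then carry it onto $\sO_{\hat\eta^*_\ep}(t)$ by a squeezing/ALE-type map associated with $(z,r)\mapsto\big(z,\frac{R+\hat\eta^*_\ep(t,z)}{R+\hat\eta^*(t,z)}r\big)$, followed by a divergence-correcting term $r_\ep$ so that the transported field is again divergence-free (see the construction \eqref{r_ep}--\eqref{q_ep}); and finally pull back to the reference domain via $A^\omega_{\hat\eta^*_\ep}(t)$, keeping $\psi_\ep:=\psi$ (mollifying in time if needed to recover $C^1$ paths). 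The clamped conditions $\partial_z\hat\eta^*(t,0)=\partial_z\hat\eta^*(t,L)=0$ of \eqref{bc:eta} (and likewise for $\hat\eta^*_\ep$) together with the $\delta$-regime \eqref{etastarbound} ensure that this transfer preserves all constraints defining $\sD$ — the kinematic coupling $\bq_\ep(\cdot,1)=\psi_\ep\be_r$, the symmetry condition and the inlet/outlet condition — so that $\bQ_\ep$ takes values in $\sD$ with $\text{div}^{\hat\eta^*_\ep}\bq_\ep=0$. Phrasing the whole operation as a Carath\'eodory function of $(\hat\eta^*,\hat\eta^*_\ep)$ (using the $H^s$-continuity, $\tfrac32<s<2$, of $\eta\mapsto A^\omega_\eta$ on $\{\inf(R+\eta)\ge\delta,\ \|R+\eta\|_{H^s}\le 1/\delta\}$) makes $\bQ_\ep$ an $(\hat\sF^\ep_t)_{t\ge0}$-adapted process with $C^1$ paths in $\sD$, bounded in $L^\infty(0,T;\bH^1(\sO)\times H^2_0(0,L))$ uniformly in $\ep$; and since $\hat\eta^*_\ep\to\hat\eta^*$ in $L^\infty(0,T;C^1([0,L]))$ $\hat\bP$-a.s. by \eqref{etaunif2}, the transfer map tends to the identity, giving $\bq_\ep\to\bq$ strongly in $L^2(0,T;\bH^1(\sO))$ but — crucially — $\partial_t\bq_\ep\rightharpoonup\partial_t\bq$ only weakly in $L^2(0,T;\bL^2(\sO))$, since $\partial_t\bq_\ep$ carries a factor $\partial_t\hat\eta^*_\ep=\hat v^*_\ep$ which converges only weakly, see \eqref{vweak2}.

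Next I would upgrade \eqref{approxsystem2}, which is stated for time-independent $\bQ\in\sD$, to an identity valid for the time-dependent adapted process $\bQ_\ep$: this is the standard manoeuvre of approximating $\bQ_\ep$ by processes piecewise constant in time, writing \eqref{approxsystem2} at the partition points, summing by parts in time and refining the mesh (using the It\^o isometry for the stochastic term and the continuity of $t\mapsto\bQ_\ep(t)$ in $\sD$); it produces the extra terms $\int_0^t\int_\sO(R+\hat\eta^*_\ep)\hat\bu_\ep\cdot\partial_s\bq_\ep$ and $\int_0^t\int_0^L\partial_s\hat\eta_\ep\,\partial_s\psi_\ep$ on the right-hand side. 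In that identity the penalty term $\frac1\ep\int_0^t\int_\sO\text{div}^{\hat\eta^*_\ep}\hat\bu_\ep\,\text{div}^{\hat\eta^*_\ep}\bq_\ep$ vanishes identically because $\text{div}^{\hat\eta^*_\ep}\bq_\ep=0$ (and even a construction yielding only $\|\text{div}^{\hat\eta^*_\ep}\bq_\ep\|_{L^2}$ small enough relative to $\ep^{1/2}$ would suffice, thanks to $\hat\bE\,\|\text{div}^{\hat\eta^*_\ep}\hat\bu_\ep\|^2_{L^2(0,T;L^2(\sO))}\le C\ep$ from Lemma \ref{boundsep}(6)).

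I would then pass to the limit $\ep\to0$ along the a.s.\ convergent sequence of Theorem \ref{skorohod2}, after multiplying by an arbitrary $\varphi\in C^\infty_c((0,T))$ and integrating in $t$ (this is why \eqref{martingale2} holds only for a.e.\ $t$, consistent with the limited time regularity of $\hat\bu$). The guiding principle is that every weakly convergent factor must be paired with a strongly convergent one. Concretely: $(R+\hat\eta^*_\ep)\hat\bu_\ep\to(R+\hat\eta^*)\hat\bu$ strongly in $L^2(0,T;\bL^2(\sO))$ — from the uniform convergence of $\hat\eta^*_\ep$ and the strong $L^2(0,T;\bH^\alpha)$ compactness of $\{\hat\bu_\ep\}$ in Lemma \ref{tightep}(1) — pairs with $\partial_s\bq_\ep\rightharpoonup\partial_s\bq$; in the viscous term $\bD^{\hat\eta^*_\ep}(\hat\bu_\ep)\rightharpoonup\bD^{\hat\eta^*}(\hat\bu)$ weakly in $L^2(0,T;\bL^2(\sO))$ (from \eqref{uweak2} and the uniform convergence of the Jacobians) pairs with the strongly convergent $(R+\hat\eta^*_\ep)\bD^{\hat\eta^*_\ep}(\bq_\ep)$; in the convective term the tensor $(R+\hat\eta^*_\ep)(\hat\bu_\ep-\partial_s\hat\eta_\ep\,r\be_r)\otimes\bq_\ep$ converges strongly in $L^2(0,T;\bL^2(\sO))$ — using \eqref{ul4}, the strong $L^2(0,T;L^4(0,L))$ convergence of $\hat v_\ep=\partial_t\hat\eta_\ep$ (interpolating the strong $L^2$ and the bounded $L^2(0,T;H^{1/2})$ from Lemma \ref{boundsep}(2)), and the boundedness and convergence of $\bq_\ep$ — and pairs with $\nabla^{\hat\eta^*_\ep}\hat\bu_\ep\rightharpoonup\nabla^{\hat\eta^*}\hat\bu$, while the second half of the trilinear form converges by a fully strong argument; the term $\frac12\int\partial_s\hat\eta^*_\ep\,\hat\bu_\ep\cdot\bq_\ep$ pairs $\hat v^*_\ep\rightharpoonup\hat v^*$ from \eqref{vweak2} with the strongly convergent $\int_0^1(\hat\bu_\ep\cdot\bq_\ep)(\cdot,r)\,dr$; the elastic term uses $\partial_{zz}\hat\eta_\ep\rightharpoonup\partial_{zz}\hat\eta$ against $\partial_{zz}\psi_\ep\to\partial_{zz}\psi$; and the pressure-data term converges by the trace theorem and $\bq_\ep\to\bq$. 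For the stochastic term, the Lipschitz bounds \eqref{growthG} and the a.s.\ strong convergence of $(\hat\bu_\ep,\hat v_\ep,\hat\eta^*_\ep)$ give $G(\hat\bu_\ep,\hat v_\ep,\hat\eta^*_\ep)\to G(\hat\bu,\hat v,\hat\eta^*)$ in $L^2(0,T;L_2(U_0;\bL^2))$ $\hat\bP$-a.s.; together with $\hat W_\ep\to\hat W$ in $C([0,T];U)$, $\bQ_\ep\to\bQ$, and the progressive measurability furnished by Lemma \ref{representative} and Remark \ref{rem:integral}, one passes to the limit in the stochastic integral as in Lemma \ref{conv_G}, obtaining $\int_0^t(G(\hat\bu,\hat v,\hat\eta^*)d\hat W,\bQ)$. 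Collecting the limits and letting $\varphi$ vary yields \eqref{martingale2} for the fixed $\bQ$, and then for every admissible test process by a density argument.

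The hard part is the first step: transferring a test function that is divergence-free with respect to the random, a priori unknown interface $\hat\eta^*$ to one divergence-free with respect to $\hat\eta^*_\ep$, while simultaneously preserving the coupling and the remaining boundary conditions, keeping the construction measurable so that the family is adapted to the approximate filtration, and retaining enough convergence (strong for $\bq_\ep$, at least weak for $\partial_t\bq_\ep$). Everything else is then essentially forced by the rule that each weakly convergent factor in \eqref{approxsystem2} must meet a strongly convergent partner, which is exactly why the fractional-in-time compactness giving strong $L^2(0,T;\bH^\alpha(\sO))$ convergence of $\{\hat\bu_\ep\}$ (Lemma \ref{tightep}) cannot be dispensed with, and why the penalized formulation must be retained until this very last limit.
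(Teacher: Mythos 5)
Your overall architecture matches the paper's: build adapted approximate test processes $\bQ_\ep$ that are divergence-free with respect to $\hat\eta^*_\ep$, upgrade \eqref{approxsystem2} to time-dependent random test functions (the paper does this via a special It\^o product rule as in Lemma 5.1 of \cite{BO13}; your Riemann-sum/summation-by-parts argument is an acceptable substitute), and then pass $\ep\to0$ term by term with weak--strong pairings and a Lemma \ref{conv_G}-type argument for the stochastic integral. The genuine gap sits exactly where you yourself say the heart of the matter is: the transfer of $\bq$ to a field with $\text{div}^{\hat\eta^*_\ep}\bq_\ep=0$. The pullback along $(z,r)\mapsto\bigl(z,\tfrac{R+\hat\eta^*_\ep}{R+\hat\eta^*}r\bigr)$ does \emph{not} preserve the divergence constraint (the ratio depends on $z$ and $t$, and the radial dilation itself creates divergence), so your construction rests entirely on the unspecified ``divergence-correcting term''. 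Such a corrector would have to simultaneously (i) vanish on $\Gamma$ and respect the inlet/outlet and bottom conditions so that $\bQ_\ep$ stays in $\sD$ with the kinematic coupling intact, (ii) be measurable in $\omega$ and adapted to a filtration with respect to which $\hat W_\ep$ is still a Wiener process, and (iii) converge strongly together with $\bq_\ep$ and have a time derivative (which carries $\hat v^*_\ep,\hat v^*\in L^\infty(0,T;L^2)$ only) converging at least weakly, so that $\int(R+\hat\eta^*_\ep)\hat\bu_\ep\cdot\partial_t\bq_\ep$ passes to the limit. None of this is argued, and it is precisely where the paper does real work: it extends $\br=\bq\circ A_{\hat\eta^*}^{-1}$ above the interface by the divergence-free field $(0,\psi)$ and squeezes with the single random constant $\beta_\ep=\max\{\sup_{t,z}\tfrac{R+\hat\eta^*}{R+\hat\eta^*_\ep},1\}$ via the anisotropic scaling $(\beta_\ep r^0_z(z,\beta_\ep r),\,r^0_r(z,\beta_\ep r))$ in \eqref{r_ep}, which preserves $\nabla\cdot$ \emph{exactly} and, since $\beta_\ep(R+\hat\eta^*_\ep)\ge R+\hat\eta^*$, returns exactly $(0,\psi)$ on $\Gamma_{\hat\eta^*_\ep}$; measurability is then obtained by a Carath\'eodory-function argument, with the filtration enlarged as in \eqref{Ftep} to accommodate $\beta_\ep$ (which is not adapted, being a supremum over $[0,T]$). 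Without an explicit corrector enjoying (i)--(iii), your Step 1 -- and hence the proof -- is incomplete.

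Two secondary comparisons. In the advection term you pass to the limit directly in the symmetric trilinear form by pairing $\nabla^{\hat\eta^*_\ep}\hat\bu_\ep\rightharpoonup\nabla^{\hat\eta^*}\hat\bu$ against strongly convergent lower-order factors; the paper instead integrates by parts, kills the resulting term $\int(R+\hat\eta^*_\ep)\,\text{div}^{\hat\eta^*_\ep}\hat\bu_\ep\,\hat\bu_\ep\cdot\bq_\ep$ via \eqref{div0}, and then needs its Step 3 (that $\text{div}^{\hat\eta^*}\hat\bu=0$ a.e., from \eqref{uniformep}$_4$ and Vitali) to return to the symmetric form in \eqref{martingale2}. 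Your route is a legitimate, slightly more direct alternative for \eqref{martingale2} itself, but note that the incompressibility of the limit, which you omit entirely, is proved inside the paper's argument and is indispensable for $(\hat\bu,\hat\eta)$ to yield a martingale solution in the sense of Definition \ref{def:martingale}. Finally, your stated convergences $\bq_\ep\to\bq$ in $L^2(0,T;\bH^1(\sO))$ and $\partial_t\bq_\ep\rightharpoonup\partial_t\bq$ in $L^2(0,T;\bL^2(\sO))$ correspond in the paper to \eqref{convq} ($L^\infty(0,T;\bW^{1,p})$, $p<\infty$) and \eqref{convqt} (weak convergence only in $L^2(0,T;\bL^q)$, $q<2$, because $\partial_t\bq_\ep$ carries $\hat v^*_\ep$); the weaker integrability still suffices for all the pairings you use, but the $L^2$-in-space weak convergence of $\partial_t\bq_\ep$ should not be asserted without justification.
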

\begin{proof}[Proof of Theorem \ref{exist2}]
{Passing to the limit as $\ep\to0$ in \eqref{approxsystem2} will be done in the following four steps. In step 1 we will construct appropriate test functions $\bQ_\ep$ for the weak formulation \eqref{approxsystem2}. In step 2 we
	discuss the limit $\ep \to0$ of the stochastic integral. In step 3 we show that the limit $\hat\bu$ satisfies the transformed
	incompressibily condition. In step 4 we pass to the limit in the remaining terms, of which
	the nonlinear advection term is the only one which requires discussion.}
We present these steps next.

{\bf Step 1.}	To pass to the limit in \eqref{approxsystem2}, we {will}  consider test functions $(\bq_{\ep},\psi_{\ep})$ taking values in $\sD$, {where $\sD$ is defined in \eqref{Dspace},} such that div$^{\hat{\eta}^*_{\ep}}\bq_{\ep}=0$ so that the penalty term drops out. At the same time we want \eqref{martingale2} to hold for every $\sD-$valued process $(\bq,\psi)$ such that div$^{\hat\eta^*}\bq=0$. This dependence of test functions on $\hat\eta^*$ leads us to construct test functions, specific to our problem, on the maximal rectangular domain $\sO_\delta$ consisting of all the fluid domains associated with the structure displacements $\hat\eta_\ep^*$. 
	
	We begin by constructing an appropriate test function for the limiting equation \eqref{martingale2} as follows: Consider a smooth, essentially bounded, $(\hat\sF_t)_{t\geq0}$-adapted process $\br=(r_1,r_2)$ on $\bar\sO_{\hat\eta^*}$ such that $\nabla\cdot \br=0$ and such that $\br$ satisfies the required boundary conditions  $r_2=0 \text{ on } z=0,L, r=0$  and $\partial_r r_1=0 \text{ on }\Gamma_{b}$. Assume also that on the top lateral boundary of the moving domain associated with $\hat\eta^*$, the function $\br$ satisfies $ \br(t,z,R+\hat\eta^*(t,z))=\psi(t,z){\be}_r$.  Next, we define
	$$ \bq(t,z,r,\omega) = \br(t,\omega) \circ\, A^\omega_{\hat\eta^*}(t)(z,r) .$$	
	Now, we will show that $(\bq,\psi)$ is an appropriate test function as it appears in the statement of Theorem \ref{exist2}.
	
	For that purpose, for any $t\in[0,T]$ and given process $\br$ as mentioned above, let $\sC_\br:\hat\Omega\times C([0,L]) \rightarrow \bC^1(\bar\sO)$ be defined as $$\sC_{\br}(\omega,\eta)
	=F_\eta(\br(t,\omega)),$$
	where $F_\eta({\bf f})(z,r):={\bf f}(z,(R+\eta(z))r)$ is a well-defined map from $\bC(\bar\sO_{\eta})$ to $\bC(\bar\sO)$ for  any $\eta\in C([0,L])$. Thanks to the continuity of the composition operator $F_\eta$ and the assumption that $\br(t)$ is $\hat\sF_t$-measurable, we obtain that, for any $\eta$, the $\bC^1(\bar\sO)$-valued map $\omega \mapsto \sC_\br(\omega,\eta)$ is $\hat\sF_t$-measurable (where $\bC^1(\bar\sO)$ is endowed with Borel $\sigma$-algebra).
	Note also that for any fixed $\omega$, the map  $\eta\mapsto \sC_\br(\omega,\eta)$ is continuous.
	Hence we deduce that $\sC_\br$ is a Carath\'eodory function.
	Now by the construction of the filtration $(\hat\sF_t)_{t\geq 0}$ in \eqref{Ft} we know that $\hat\eta^*$ is $(\hat\sF_t)_{t\geq 0}$-adapted. Therefore, we conclude that the $\bC^1(\bar\sO)$-valued process $\bq$, which by definition is $\bq(t,\omega)=\sC_\br(\omega,\hat\eta^*(t,\omega))$,
	 is $(\hat\sF_t)_{t\geq 0}$-adapted as well. The same conclusions follow for the process $\psi$, using the same argument.
	
	{ Now we will begin our construction of the approximate test functions $\bQ_\ep$ to be used in \eqref{approxsystem2} to pass to the limit $\ep\rightarrow 0$.  These test functions $\bQ_\ep$ need to satisfy the divergence-free condition on the domains associated with the approximate functions $\hat\eta^*_\ep$ along with the right boundary conditions. Furthermore, they need to converge to the test function $\bQ$ in an appropriate sense. As mentioned in the introduction, we need to be careful as these test functions $\bQ_\ep$ are also required to satisfy appropriate measurability properties (see Lemma \ref{conv_G}). 
		Hence, as done earlier in the proof of Lemma \ref{tightuv}, we construct these test functions by extending and then "squeezing" the function $\br$ while also ensuring that its desired properties are preserved.}
	
	Hence, define $\sC_{ext}(\omega,\eta)= E_\eta(\br(t,\omega))$ where $E_\eta$ is the operator that extends in the vertical direction $\be_r$, the boundary data of functions defined on $\sO_\eta$ to $\bar\sO_\delta$. Let $\br^0(t,\omega)= \sC_{ext}(\omega,\hat\eta^*(t,\omega))$,
	that is,
	\begin{align*}
	(r^0_z,r^0_r)=\br^0:=\begin{cases}
	\br(t,z,r,\omega), &\text{if } r \leq R+\hat\eta^*(t,z,\omega),\\
	(0,\psi(t,z,\omega)),& \text{elsewhere in } \bar\sO_\delta.
	\end{cases}
	\end{align*}
	Adaptedness of $\br^0$ then follows using an argument identical to the one above by realizing that $\sC_{ext}$ is a Carath\'eodory function. Notice also that since  div$(0,\psi(t,z))=0$ we have that  div $\br^0=0$ as well. For properties of $\br^0$, see e.g. Section 11.3 in \cite{LM72} and \cite{Mi11}.
	
	Now we will scale $\br^0=(r^0_z,r^0_r)$
	to construct its suitable $\ep$-approximations while also preserving its desired properties. Define for any $\omega\in\hat\Omega$, the random variable $$\beta_\ep(\omega)=
\max\Big\{\sup_{(t,z)\in [0,T]\times[0,L]}	\frac{R+\hat\eta^*}{R+\hat\eta^*_\ep}, 1\Big\}
	$$ 
Since $|\beta_\ep-1| \leq \sup_{(t,z)\in [0,T]\times[0,L]}	\frac{|\hat\eta^*-\hat\eta^*_\ep|}{R+\hat\eta^*_\ep} \leq \frac1\delta{\|\hat\eta^*-\hat\eta^*_\ep\|_{L^\infty(0,T)\times(0,L)}}$, thanks to the uniform convergence \eqref{etaunif2}, we know that $\beta_\ep \rightarrow 1$ almost surely.
	
	Using this definition we set,
	\begin{align}\label{r_ep}
	\br^0_\ep(t,z,r,\omega)=\begin{cases}
	(\beta_\ep(\omega) r^0_z(t,z,\beta_\ep(\omega) r,\omega),r^0_r(t,z,\beta_\ep(\omega) r,\omega)), &\text{ if }  r\leq \frac{1}{\delta\beta_\ep},\\
	(0,\psi(t,z,\omega)), &\text{ elsewhere in }\bar\sO_\delta.
	\end{cases}
	\end{align}
	Notice that by scaling $\br^0$ in this fashion we still have that
	$\nabla\cdot \br^0_\ep=0$  for every $\omega\in\hat\Omega$. 
	It is easy to see that $\br^0_\ep(t,z,R+\hat\eta^*_\ep(t,z,\omega),\omega)=(0,\psi(t,z,\omega))$.  
	
	We will next use $\br^0_\ep$ to build the test {function} $\bq_\ep$  for the fluid equations on the fixed domain $\sO$. To study the properties of $\bq_\ep$ we will first give the construction of an appropriate filtration $\left(\hat{\mathcal{F}}^{\ep}_t \right)_{t \geq 0}$ on the probability space $(\hat\Omega,\hat{\mathcal{F}},\hat\bP)$ as follows: For a fixed $\ep \in \Lambda$, let $\tilde\sF^\ep_t$ be the $\sigma$-field generated by the random variables 
	$(\hat{\bu}_\ep(s),\hat v_\ep(s)),\hat\eta_\ep(s),\hat{W}_\ep(s), \beta_\ep$ for all $s \leq t$ and define
	\begin{align}\label{Ftep}
	\hat{\mathcal{F}}^\ep_t :=\sigma\left( \bigcap_{s\ge t}\sigma(\tilde{\sF}^\ep_s\cup \mathcal{N})\cup \hat\sF_t\right).
	\end{align}
	Finally, we define the approximate test functions as follows:
	\begin{align}\label{q_ep}
	\bq_{\ep}(t,z,r,\omega) =  \br^0_\ep(t,\omega)|_{\sO_{\hat\eta^*_{\ep}}} 
	\circ A^\omega_{\hat\eta^*_{\ep}}(t)(z,r)
	.\end{align}
	Adaptedness of the process $\bq_\ep$ to the filtration $(\hat\sF^\ep_t)_{t\geq 0}$ follows from the adaptedness of $\br^0_\ep$ and by using the same arguments as above.	Additionally, since every realization of $\br^0_\ep$ is divergence-free in $\sO_{\hat\eta^*_\ep}$, we have that $\text{div}^{\hat\eta^*_{\ep}(t)} \bq_{\ep}(t)=0$. Admissibility of the boundary values $\bq_\ep|_{\partial\sO}$ can be verified easily. Hence, the $\sD$-valued process $\bQ_{\ep}=(\bq_{\ep},\psi_{})$ will serve as our random test function for the $\ep$-approximation.	
	
	Now we study the convergence of $\bQ_\ep$ as $\ep \to 0$.
	Observe that, for any $\omega\in\hat\Omega$, using the mean value theorem we can write
	\begin{equation}\label{mvtq}
	\begin{split}
	|\bq_\ep(t,z,r)-\bq(t,z,r)|&=|\br^0_\ep(t,z,(R+\hat\eta_\ep^*(t,z))r)-\br^0(t,z,(R+\hat\eta^*(t,z))r)|\\
	&\leq|\partial_r\br^0_\ep(t,z,\rho)r||\hat\eta_\ep^*(t,z)-\hat\eta^*(t,z)|\\
	&+|\br^0_\ep(t,z,(R+\hat\eta^*(t,z))r)-\br^0(t,z,(R+\hat\eta^*(t,z))r)|.
	\end{split}	\end{equation} 
	{To simplify notation, in the following calculation we will denote $\tilde r=(R+\hat\eta^*(t,z))r$, for any $r\in[0,1]$.}
	We obtain
	\begin{align*}
	&|\br^0_\ep(t,z,(R+\hat\eta^*(t,z))r)-\br^0(t,z,(R+\hat\eta^*(t,z))r)|\\
	&=\left( |(\beta_\ep r^0_z(t,z,\beta_\ep \tilde r),r^0_r(t,z,\beta_\ep \tilde r))-( r^0_z(t,z, \tilde r),r^0_r(t,z, \tilde r))|\right) \mathbbm{1}_{\{r\leq \frac{1}{\beta_\ep}\}}+|\br^0(t,z,\tilde r)|\mathbbm{1}_{\{ \frac{1}{\beta_\ep}< r\leq 1\}}\\
	&\leq \left( |{\br^0}(t,z,\beta_\ep \tilde r)- {\br^0}(t,z,\tilde r)|+ |((\beta_\ep -1)r^0_z(t,z,\beta_\ep \tilde r),0)|\right) \mathbbm{1}_{\{r\leq \frac{1}{\beta_\ep}\}}+|\br^0(t,z,\tilde r)|\mathbbm{1}_{\{ \frac{1}{\beta_\ep}< r\leq 1\}}\\
	&\leq [|\partial_r\br^0(t,z,\rho_1)(R+\hat\eta^*(t,z))r|+|(r^0_z(t,z,\beta_\ep \tilde r),0)|](\beta_\ep-1)\mathbbm{1}_{\{r\leq \frac{1}{\beta_\ep}\}}+|\br^0(t,z,\tilde r)|\mathbbm{1}_{\{ \frac{1}{\beta_\ep}< r\leq 1\}}.
	\end{align*}
	Thus using \eqref{etaunif2} we pass $\ep\rightarrow 0$ in \eqref{mvtq} to obtain that
	\begin{align}
	\bq_{\ep} \rightarrow \bq \quad \text{ in }  L^\infty(0,T;\bL^{p}(\sO)) 
	\quad \hat\bP-  a.s. , \quad \text{ for any } p<\infty.
	\end{align}
	Now, since by definition $\nabla^{\hat\eta^*_\ep} \bq_\ep= \nabla\br^0_\ep$, we can carry out similar calculations for the transformed gradient to obtain
	\begin{align}
	\nabla^{\hat\eta^*_\ep}\bq_\ep\rightarrow \nabla^{\hat\eta^*}\bq \quad \text{ in }  L^\infty(0,T;\bL^{p}(\sO)) 	\quad \hat\bP-  a.s. , \quad \text{ for any } p<\infty.
	\end{align}
	Recalling that $\nabla\bq_\ep=(\nabla^{\hat\eta^*_\ep}\bq_\ep)(\nabla A^\omega_{\hat\eta^*_\ep})$ and that $\nabla A_{\hat\eta^*_\ep}^\omega \rightarrow \nabla A_{\hat\eta^*}^\omega $ in $L^\infty(0,T;\bC(\bar\sO))$ for almost every $\omega\in\hat\Omega$, we summarize our convergence results for the test functions below:
	\begin{align}\label{convq}
	\bq_{\ep} \rightarrow \bq \quad \text{ in }  L^\infty(0,T;\bW^{1,p}(\sO)) 
	\quad \hat\bP-  a.s. ,\quad \text{ for any } p<\infty.
	\end{align}
	Observe also that 
	$\partial_t \bq_\ep=\partial_t\br^0_\ep\circ A_{\hat\eta^*_\ep}+\nabla^{\hat\eta^*_\ep}\bq_\ep \cdot\partial_t\hat\eta^*_\ep r\be_r$.
	Since we know that $\hat v_\ep^* \rightharpoonup \hat v^*$ in $L^\infty(0,T;L^2(0,L))$ a.s., we further infer
	\begin{align}\label{convqt}
	\partial_t\bq_\ep \rightharpoonup \partial_t\bq \quad\text{ weakly in } L^2(0,T;\bL^q(\sO)) \quad \text{ for any } q<2, \quad\hat\bP-a.s.
	\end{align}
	
	{We are now in a position to take the limit as $\ep\to 0$ in the weak formulation \eqref{approxsystem2}. Namely, we consider \eqref{approxsystem2}  
		and use $\bQ_{\ep}=(\bq_{\ep},\psi)$ as the test functions.
		This requires} a special version of the It\^o product rule which can be proven by using a regularization argument as outlined in Lemma 5.1 in \cite{BO13}. We obtain that
	\begin{equation}\begin{split}\label{epeq}
	&(	(R+\hat{\eta}_\ep ^*(t))\hat\bu_\ep (t),\bq_\ep(t))+(\hat{ v}_\ep(t),\psi(t))
	=(\bu_0(R+\eta_0),\bq_\ep(0))+( v_0,\psi(0))\\
	&+\int_0^t\int_\sO(R+{\hat\eta_\ep}^*)\hat\bu_\ep \partial_t{\bq}_\ep +\int_0^t\int_0^L \hat v_\ep \partial_t{\psi}+\frac12\int_0^t\int_{\sO}\hat {v}_\ep ^*\hat{\bu}_\ep \cdot\bq_\ep\\
	&-\int_0^t\int_0^L(\partial_z\hat\eta_\ep \partial_z\psi+\partial_{zz}\hat\eta_\ep \partial_{zz}\psi )
	-2\nu\int_0^t\int_{\sO}(R+\hat{\eta} _\ep^*) \bD^{\hat\eta_\ep ^*}(\hat\bu_\ep )\cdot \bD^{\hat\eta_\ep ^*}(\bq_\ep)  \\
	&-\frac12\int_0^t\int_{\sO}(R+\hat\eta_\ep ^*)((\hat\bu_\ep-\hat {v}_\ep
	r\be_r)\cdot\nabla^{\hat\eta_\ep ^*}\hat\bu_\ep \cdot\bq_\ep
	- (\hat\bu_\ep-\hat{v} _\ep
	r\be_r)\cdot\nabla^{\hat\eta_\ep ^*}\bq_\ep\cdot\hat\bu_\ep )\\
	&+ 
	\int_0^t\left( P_{{in}}\int_{0}^1(q_\ep)_z\Big|_{z=0}dr-P_{{out}}\int_{0}^1(q_\ep)_z\Big|_{z=1}dr\right)
	ds +\int_0^t
	(	G(\hat\bu_\ep,\hat v_\ep ,\hat\eta_\ep^*)d\hat W_\ep , \bQ_\ep),
	\end{split}
	\end{equation}
	holds $\hat\bP-$a.s. for every $t\in[0,T]$.
	
	Now we begin passing {$\ep \rightarrow 0$ in \eqref{epeq}. 
		
		{\bf Step 2.} We start with the stochastic integral.} Recall Remark \ref{rem:integral} and the fact that, by construction, $\bQ$ is $(\hat\sF_t)_{t\geq 0}$-adapted.
	\begin{lem}\label{conv_G}
		The sequence of processes $
		\left( \int_0^t(G(\hat{\bu}_{\ep}(s),\hat v_\ep(s),\hat\eta^*_{\ep}(s))d\hat{W}_{\ep}(s),\bQ_{\ep}(s))\right) _{t \in [0,T]}$ converges to $\left( \int_0^t(G(\hat{\bu}(s),\hat v(s),\hat\eta^*(s))d\hat{W}(s),\bQ(s))\right) _{t \in [0,T]}$  in $L^1(\hat{ \Omega};L^1(0,T;\mathbb{R}
		))$ as ${\ep}\rightarrow 0$.
	\end{lem}
	\begin{proof}
		First, by using \eqref{growthG}$_{2,3}$ we observe that 
		\begin{align*}
		&	\int_0^T\|{(G(\hat{\bu}_{\ep},\hat v_\ep,\hat\eta^*_{\ep}),\bQ_{\ep})-( G(\hat \bu,\hat v,\hat\eta^*),\bQ)}\|^2_{L_2(
			U_0,\R)}d s\\
		&\leq \int_0^T \|(G(\hat{\bu}_{\ep},\hat v_\ep,\hat{\eta}^*_{\ep})-G(\hat{\bu},\hat v,\hat{\eta}^*_{\ep}),\bQ_{\ep})\|^2_{L_2(U_0;\mathbb{R})} + \int_0^T\|(G(\hat{\bu},\hat v,\hat{\eta}^*_{\ep}),\bQ_{\ep}-\bQ)\|^2_{L_2(U_0;\mathbb{R})}\\
		&\qquad +\int_0^T\|(G(\hat\bu,\hat v,\hat\eta^*_{\ep})-G(\hat\bu,\hat v,\hat\eta^*),\bQ)\|^2_{L_2(U_0;\mathbb{R})}\\
		&\le \int_0^T\left( \|\hat\eta^*_{\ep}\|^2_{L^\infty(0,L)}\|\hat{\bu}_{\ep}-\hat\bu\|^2_{\bL^2(\sO)}+\|\hat v_{\ep}-\hat v\|_{L^2(0,L)}^2\right) \|\bQ_{\ep}\|_{\bL^2}^2\\
		&+  \int_0^T\left( \|\hat\eta^*_{\ep}\|^2_{L^\infty(0,L)}\|\hat\bu\|^2_{\bL^2(\sO)}+\|\hat v\|_{L^2(0,L)}^2\right) \|\bQ_{\ep}-\bQ\|_{\bL^2}^2+ \int_0^T\|\hat\eta^*_{\ep}-\hat\eta^*\|^2_{L^\infty(0,L)}\|\hat{\bu}\|^2_{\bL^2(\sO)}\|\bQ\|_{\bL^2}^2\\
		&\leq C(\delta)\Big( \|\bQ_{\ep}\|^2_{L^\infty(0,T;\bL^2)}(\|\hat\bu_{\ep}-\hat\bu\|^2_{L^2(0,T;\bL^2(\sO))}+\|\hat v_{\ep}-\hat v\|^2_{L^2(0,T;L^2(0,L))})\\
		&+\|\bQ_{\ep}-\bQ\|^2_{L^\infty(0,T;\bL^2)}\left( \|\hat{\bu}\|_{L^2(0,T;\bL^2(\sO))}^2 +\|\hat v\|^2_{L^2(0,T;L^2(0,L))}\right) \\
		&+ \|\hat\eta^*_{\ep}-\hat\eta^*\|_{L^\infty((0,T)\times(0,L))}\|\hat{\bu}\|^2_{L^2(0,T;\bL^2(\sO))}\|\bQ\|_{L^\infty(0,T;\bL^2)}^2\Big).
		\end{align*}	
		Thanks to Theorem \ref{skorohod2}, the right hand side of the inequality above converges to 0, $\hat\bP-$a.s. as ${\ep}\rightarrow 0$.	Hence we have proven that 
		\begin{align}\label{g1}
		(G(\hat{\bu}_{\ep},\hat v_\ep,\hat\eta^*_{\ep}),\bQ_{\ep})\rightarrow( G(\hat \bu,\hat v,\hat\eta^*),\bQ), \qquad \text{$\hat\bP-$a.s \quad in $L^2(0,T;L_2(U_0,\R		)).$ }\end{align}
		Now using classical ideas from \cite{Ben} (see also Lemma 2.1 of \cite{DGHT} for a proof), the convergence \eqref{g1} implies that
		\begin{align}\label{G2}
		\int_0^t(G(\hat{\bu}_{\ep},\hat v_\ep,\hat\eta^*_{\ep})d\hat W_\ep,\bQ_{\ep}) \rightarrow \int_0^t( G(\hat \bu,\hat v,\hat\eta^*)d\hat{W},\bQ)
		\end{align}
		in probability in $L^2(0,T;\R)$.
		
		Furthermore observe that for some $C>0$ independent of $\ep$ we have the following bounds which follow from the It\^{o} isometry:
		\begin{align}
		\hat \bE\int_0^T |\int_0^t (G(\hat{\bu}_{\ep},\hat v_\ep,\hat\eta^*_{\ep})d\hat W_\ep(s),\bQ_{\ep})&|^2d t=\int_0^T \hat\bE\int_0^t\|(G(\hat{\bu}_{\ep},\hat v_\ep,\hat\eta^*_{\ep}),\bQ_{\ep})\|^2_{L_2(U_0,\R)}d s d t \notag\\
		& \le  T
		\hat \bE\left(\int_0^T\left( \|{\hat\eta^*_{\ep}}\|_{L^\infty(0,L)}^2\|\hat{\bu}_{\ep}\|^2_{\bL^2(\sO)}+\|\hat v_{\ep}\|^2_{L^2(0,L)}\right) d s\right)\label{G3}\\
		&\le   C(\delta).\notag
		\end{align}
		Combining \eqref{G2}, \eqref{G3} and using the Vitali convergence theorem, we thus conclude the proof of Lemma \ref{conv_G}.
	\end{proof}	
	
	{{\bf{Step 3.}} We now show that the limit $\hat\bu$ solves the transformed incompressibily condition.}
	{To do that, observe} that \eqref{uniformep}$_{4}$ implies that for any measurable set $A\subset \hat\Omega\times[0,T]$  we have $\hat\bE\int_0^T\chi_A\left( \text{div}^{\hat\eta^*_\ep}\hat\bu_\ep,\varphi\right)  \rightarrow 0$ for any $\varphi \in L^2(\sO)$. 
	Observe also that, thanks to \eqref{etaunif2} and \eqref{etastarbound}, we can infer that $\frac{\partial_z\hat\eta^*_\ep}{R+\hat\eta^*_\ep} \rightarrow \frac{\partial_z\hat\eta^*}{R+\hat\eta^*} $ uniformly on $[0,T]\times\sO$ a.s.
	and  also that $\hat\bE\|\frac{\partial_z\hat\eta^*_\ep}{R+\hat\eta^*_\ep}\|^p_{L^\infty(0,T;L^\infty(\sO))}<C$ for any $p>2$. An application of the Vitali convergence theorem thus gives us that $$\frac{\partial_z\hat\eta^*_\ep}{R+\hat\eta^*_\ep} \rightarrow \frac{\partial_z\hat\eta^*}{R+\hat\eta^*} 
	\quad \text{in } L^2(\hat\Omega;L^\infty(0,T;L^\infty(\sO))).$$ Combining this with the weak convergence (up to a subsequence) result that we obtain as a consequence of \eqref{uniformep}$_{4}$, we deduce that $\hat\bE\int_0^T\chi_A\left( \text{div}^{\hat\eta^*}\hat\bu,\varphi\right)  = 0$ for any $\varphi \in L^2(\sO)$ and measurable set $A$. Thus div$^{\hat\eta^*(t)}\hat\bu(t)=0$ for almost every $t\in[0,T]$ almost surely.
	
	{\bf{Step 4.}} We now pass to the limit $\ep \rightarrow 0$ in the remaining terms. For the first term we write,
	\begin{align*}
		|\int_0^t\int_\sO \hat\bu_\ep\partial_t\bq_\ep-\hat\bu\partial_t\bq| \leq 	|\int_0^t\int_\sO (\hat\bu_\ep-\hat\bu)\partial_t\bq_\ep| + 	|\int_0^t\int_\sO \hat\bu(\partial_t\bq_\ep-\partial_t\bq)|\to 0 \quad a.s. \text{ in } L^\infty(0,T).
	\end{align*}
	The first term on the right side converges to 0 in $L^\infty(0,T)$ almost surely thanks to \eqref{ul4} whereas the second term converges to 0 thanks to the weak convergence \eqref{convqt} in $L^{2}(0,T;\bL^\frac43(\sO))$.
	 We now focus on a part of the nonlinear advection term on the right side of \eqref{epeq} and leave the rest of the proof of convergence to the reader, which is carried out in the same way using the results Theorem \ref{skorohod2}, \eqref{uweak2}, \eqref{ul4}, \eqref{div0}, \eqref{etaunif2} and  \eqref{convq}, \eqref{convqt}.
	
	Observe that by integrating by parts we can write
	\begin{align*}
	&\int_0^t\int_\sO (R+\hat\eta^*_\ep)((\hat\bu_\ep\cdot\nabla^{\hat\eta^*_\ep})\hat\bu_\ep\cdot\bq_\ep-(\hat\bu_\ep\cdot\nabla^{\hat\eta^*_\ep})\bq_\ep\cdot\hat\bu_\ep)\\
	&=-2\int_0^t\int_\sO (R+\hat\eta^*_\ep)(\hat\bu_\ep\cdot\nabla^{\hat\eta^*_\ep})\bq_\ep\cdot\hat\bu_\ep -\int_0^t\int_\sO(R+\hat\eta^*_\ep) 
	{\text{div}^{\hat\eta^*_\ep}\hat\bu_\ep \ \hat\bu_\ep\cdot\bq_\ep } +\int_0^t\int_0^L ( \hat v_\ep)^2\psi dz.
	\end{align*}
	It is easy to see that the last term on the right hand side converges to $\int_0^t\int_0^L\hat v^2\psi dz$ in $L^\infty(0,T)$ a.s. Now, for the other terms, writing $\hat\bu_\ep=(\hat u^1_\ep,\hat u^2_\ep)$ (likewise $\hat\bu$) and $\nabla^{\hat\eta^*_\ep}=(\partial_1^{\hat\eta^*_\ep},\partial_2^{\hat\eta^*_\ep})$, we observe that,
	\begin{align*}
	&|\int_0^t\int_\sO (R+\hat\eta^*_\ep)(\hat\bu_\ep\cdot\nabla^{\hat\eta^*_\ep})\bq_\ep\cdot\hat\bu_\ep-\int_0^t\int_\sO (R+\hat\eta^*)(\hat\bu\cdot\nabla^{\hat\eta^*})\bq\cdot\hat\bu| \\
	&=|\int_0^t\int_\sO (R+\hat\eta^*_\ep)  \sum_{i,j=1}^2 (\hat u^i_\ep\hat u^j_\ep-\hat u^i\hat u^j)\partial_i^{\hat\eta^*_\ep}q^j_\ep+ \int_0^t\int_\sO (\hat\eta^*_\ep-\hat\eta^*) (\hat\bu\cdot \nabla^{\hat\eta^*})\bq\cdot \hat\bu  \\
	&+\int_0^t\int_\sO (R+\hat\eta^*_\ep) (\hat\bu\cdot (\nabla^{\hat\eta^*_\ep}\bq_\ep-\nabla^{\hat\eta^*}\bq))\cdot \hat\bu|.
	\end{align*}
	The first two terms on the right side converge to 0 a.s thanks to Theorem \ref{skorohod2} and \eqref{convq}. Similarly for the third term, we use \eqref{convq} and obtain that
	\begin{align*}
	|\int_0^t\int_\sO (R+\hat\eta^*_\ep) (\hat\bu\cdot (\nabla^{\hat\eta^*_\ep}\bq_\ep-\nabla^{\hat\eta^*}\bq))\cdot \hat\bu| &\leq C\|\hat\bu\|^2_{L^2(0,T;\bL^4(\sO))}\|\nabla^{\hat\eta^*_\ep}\bq_\ep-\nabla^{\hat\eta^*}\bq\|_{L^\infty(0;T;\bL^2(\sO))}\\
	&\rightarrow 0, \qquad \hat\bP-a.s.
	\end{align*}
	Almost sure convergence of the third integral to 0 in $L^\infty(0,T)$ is immediate. 
	Finally, thanks to \eqref{uweak2} we have
	\begin{align*}
	\sup_{t\in[0,T]}|\int_0^t\int_\sO(R+\hat\eta^*_\ep) \text{div}^{\hat\eta^*_\ep}&\hat\bu_\ep \hat\bu_\ep\cdot\bq_\ep| \leq \int_0^T\int_\sO|(R+\hat\eta^*_\ep) \text{div}^{\hat\eta^*_\ep}\hat\bu_\ep \hat\bu_\ep\cdot\bq_\ep|
	\\&\leq C(\delta)  \|\text{div}^{\hat\eta^*_\ep}\hat\bu_\ep\|_{L^2(0,T;L^2(\sO))}\|\hat\bu_\ep\|_{L^2(0,T;\bL^4(\sO))}\|\bq_\ep\|_{L^\infty(0,T;\bL^4(\sO))}\\
	&\rightarrow 0, \qquad \hat\bP-a.s.
	\end{align*}
	{This completes the proof of Theorem \ref{exist2}.}
	
\end{proof}

{Notice that in the statement of Theorem~\ref{exist2} we still have the function $\hat\eta^*(t)$ in the weak formulation, which keeps the displacement uniformly bounded. 
	We now show that in fact $\hat\eta^*(t)$ can be replaced by the limiting stochastic process $\hat\eta(t)$ to obtain the desired weak formulation 
	and martingale solution until some stopping time 
	$\tau$, which we show is strictly greater than zero almost surely.
	
	\begin{lem}[Stopping time] \label{StoppingTime}
		Let the deterministic initial data $\eta_0$ satisfy the assumptions \eqref{etainitial}.
		Then, for any $\delta > 0$ and for a given $\frac32<s<2$,
		there exists an almost surely positive $(\hat\sF_t)_{t\geq 0}-$stopping time $\tau$, given by
		\begin{equation}\label{stoppingT}
		\tau:=T\wedge\inf\{t>0:\inf_{z\in[0,L]}(R+ \hat\eta(t,z))\leq \delta\} \wedge\inf\{t>0:\|R+\hat\eta(t)\|_{H^s(0,L)}\geq \frac1{\delta} \},
		\end{equation}
		such that
		\begin{align}\label{etasequal2}
		\hat	\eta^*(t)=\hat\eta(t) \quad \text{for } t<\tau.
		\end{align}
	\end{lem}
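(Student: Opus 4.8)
\textbf{Proof proposal for Lemma \ref{StoppingTime}.}

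The plan is to first establish that $\tau$ as defined in \eqref{stoppingT} is indeed an $(\hat\sF_t)_{t\geq 0}$-stopping time, then show it is a.s. strictly positive, and finally conclude the identity \eqref{etasequal2} by a continuity/limit argument analogous to the one already carried out for \eqref{etasequal1}. For the stopping time property, I would use the fact that $\hat\eta$ has $\hat\bP$-a.s. continuous paths in $H^s(0,L)$ (this follows from \eqref{sols2} together with the embedding $W^{1,\infty}(0,T;L^2)\cap L^\infty(0,T;H^2_0)\hookrightarrow C([0,T];H^s)$ for $\tfrac32<s<2$), so that both $t\mapsto \inf_{z}(R+\hat\eta(t,z))$ and $t\mapsto\|R+\hat\eta(t)\|_{H^s(0,L)}$ are continuous; since $\hat\eta$ is adapted to $(\hat\sF_t)_{t\geq 0}$ by the construction in \eqref{Ft1}, the hitting times of the closed/open sets in question are stopping times, and the minimum of finitely many stopping times is a stopping time.

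For strict positivity, the key point is the assumption \eqref{etainitial} on the \emph{deterministic} initial data: $\delta< R+\eta_0(z)$ for all $z$ and $\|R+\eta_0\|_{H^2_0(0,L)}<\tfrac1\delta$. Since $\hat\eta(0)=\eta_0$ deterministically (the initial data is not randomized through the scheme) and the paths $t\mapsto \hat\eta(t)$ are $\hat\bP$-a.s. continuous in $H^s(0,L)$, hence by Sobolev embedding a.s. continuous in $C^1[0,L]$, for a.e. $\omega$ there is a time $t_0(\omega)>0$ on which $\inf_z(R+\hat\eta(t,z))>\delta$ and $\|R+\hat\eta(t)\|_{H^s(0,L)}<\tfrac1\delta$ remain valid; here I use the strict inequalities in \eqref{etainitial} together with $\|R+\eta_0\|_{H^s}\le \|R+\eta_0\|_{H^2_0}<\tfrac1\delta$ to get a genuine open neighborhood of $t=0$. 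This gives $\tau(\omega)\ge t_0(\omega)>0$ $\hat\bP$-a.s.

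To obtain \eqref{etasequal2}, I would essentially repeat the argument given for \eqref{etasequal1} in the previous section, but now at the level of the $\ep$-approximations on the probability space $(\hat\Omega,\hat\sF,\hat\bP)$. Introduce the stopping times
$$
\tau_\ep:=T\wedge\inf\{t>0:\inf_{z\in[0,L]}(R+\hat\eta_\ep(t,z))\le\delta\}\wedge\inf\{t>0:\|R+\hat\eta_\ep(t)\|_{H^s(0,L)}\ge\tfrac1\delta\},
$$
and use \eqref{etaunif2} (uniform convergence $\hat\eta_\ep\to\hat\eta$ in $L^\infty(0,T;H^s(0,L))$ and in $L^\infty(0,T;C^1[0,L])$) to deduce $\tau\le\liminf_{\ep\to 0}\tau_\ep$ a.s. Then, for a.e. $\omega$ and any $t<\tau(\omega)$ and any $\epsilon>0$, split
$$
\|\hat\eta(t)-\hat\eta^*(t)\|_{H^s(0,L)}\le\|\hat\eta(t)-\hat\eta_\ep(t)\|_{H^s}+\|\hat\eta_\ep^*(t)-\hat\eta_\ep(t)\|_{H^s}+\|\hat\eta^*(t)-\hat\eta_\ep^*(t)\|_{H^s}.
$$
The first and third terms are each $<\tfrac\epsilon2$ for $\ep$ small by \eqref{etaunif2}; the middle term vanishes for infinitely many $\ep$ because, whenever $t<\tau_\ep$, the cutoff $\theta_\delta$ has not activated, so $\hat\eta_\ep^*(t)=\hat\eta_\ep(t)$ (this is exactly the relation \eqref{etasequal1} transported through the Skorohod identification \eqref{newrv2}). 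Hence $\|\hat\eta(t)-\hat\eta^*(t)\|_{H^s}<\epsilon$ for all $\epsilon>0$, giving $\hat\eta^*(t)=\hat\eta(t)$ for $t<\tau$, $\hat\bP$-a.s.

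The main obstacle I anticipate is the careful justification that $\hat\eta_\ep^*(t)=\hat\eta_\ep(t)$ on $\{t<\tau_\ep\}$ at the level of the new probability space, i.e. tracking the relation \eqref{etasequal1} (which was proven on the intermediate space $(\bar\Omega,\bar\sF,\bar\bP)$) through the second Skorohod representation and the identification \eqref{newrv2}; one must check that the event $\{t<\tau_\ep\}$ and the equality of the artificial and true displacements are measurable and transported correctly, and that $\tau$ is well-defined as a limit object despite being built from a subsequence in $\ep$. The positivity of $\tau$ is comparatively soft once the a.s. path continuity of $\hat\eta$ in $H^s$ and the strict initial inequalities \eqref{etainitial} are in hand.
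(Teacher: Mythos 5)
Your treatment of \eqref{etasequal2} is essentially the paper's own: the paper's Step 2 literally says to repeat the argument used for \eqref{etasequal1}, which is exactly the argument you spell out — introduce the stopping times $\tau_\ep$ built from $\hat\eta_\ep$, use the uniform convergence \eqref{etaunif2} to get $t<\tau_\ep$ for all small $\ep$ whenever $t<\tau$, note that $\hat\eta^*_\ep(t)=\hat\eta_\ep(t)$ there (the relation \eqref{etasequal1} transported through the identification \eqref{newrv2}, a point the paper leaves implicit and you rightly flag), and conclude by the triangle inequality. Where you genuinely diverge is the a.s. strict positivity of $\tau$: you give a soft argument — a.s. continuity of $t\mapsto\hat\eta(t)$ in $H^s(0,L)$ (hence in $C^1[0,L]$), $\hat\eta(0)=\eta_0$, and the strict inequalities \eqref{etainitial} persist on a small random time interval — whereas the paper argues quantitatively: it splits $\tau$ into the two hitting times, and for each shows $\hat\bP[\tau^{(i)}=0,\,\cdot\,]=0$ via Markov/Chebyshev estimates combined with the interpolation $\|\hat\eta(t)-\eta_0\|_{H^s}\le\|\hat\eta(t)-\eta_0\|_{L^2}^{1-\frac{s}2}\|\hat\eta(t)-\eta_0\|_{H^2}^{\frac{s}2}$ and $\sup_{t<\epsilon}\|\hat\eta(t)-\eta_0\|_{L^2}\le\epsilon\|\hat v\|_{L^\infty(0,\epsilon;L^2(0,L))}$, conditioning on initial bounds with a parameter $\delta_0>\delta$ and then using continuity from below. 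Both rest on the same regularity ($L^\infty(0,T;H^2_0)\cap W^{1,\infty}(0,T;L^2)$ paths, hence H\"older continuity into $H^s$); your route is shorter, while the paper's quantitative estimate is what allows the authors to remark that, for pathwise solutions, one could let $\delta\to0$ and extract a maximal existence time. Two minor points you share with the paper but should state explicitly: $\hat\eta(0)=\eta_0$ a.s. (transported through equality of laws via \eqref{newrv2}), and the comparison $\|R+\eta_0\|_{H^s(0,L)}\le\|R+\eta_0\|_{H^2_0(0,L)}$ presupposes a norm convention under which the Sobolev norms are monotone in $s$.
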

	
	\begin{proof} In {\bf{Step 1}} below we first show that the stopping time \eqref{stoppingT} is strictly positive a.s., and then is {\bf{Step 2}} we show that 
		$\hat	\eta^*(t)=\hat\eta(t) \quad \text{for } t<\tau.$
		
		{{\bf{Step 1.}}}
		{We start by showing that} under the assumptions \eqref{etainitial} on the deterministic initial data $\eta_0$, the stopping time $\tau$ is {almost surely strictly positive  for any $\delta>0$.}
		{For this purpose let us write the stopping time as 
			$$\tau = T\wedge\tau^{(1)}+\tau^{(2)}, $$
			where $\tau^{(1)}$ and $\tau^{(2)}$ are defined by:
			$$	
			\tau^{(1)}=\inf\{t>0:\inf_{z\in[0,L]}(R+ \hat\eta(t,z))\leq \delta\}; \quad \tau^{(2)}=\inf\{t>0:\|R+\hat\eta(t)\|_{H^s(0,L)}\geq \frac1{\delta} \}.
			$$
		}
		
		We start with $\tau^{(2)}$. Observe that using the triangle inequality, for any $\delta_0>{\delta}$, we obtain
		\begin{align*}
		\hat\bP[\tau^{(2)}=0, \|R+\eta_0\|_{H^2(0,L)}&<\frac1{\delta_0}] =\lim_{\epsilon\rightarrow 0}\hat\bP[\tau^{(2)}<\epsilon,\|R+\eta_0\|_{H^2(0,L)}<\frac1{\delta_0}]\\
		&\leq \limsup_{\epsilon \rightarrow 0^+}\hat\bP[\sup_{t\in[0,\epsilon)}\|R+\hat\eta(t)\|_{H^s(0,L)}>\frac1{\delta},\|R+\eta_0\|_{H^2(0,L)}<\frac1{\delta_0}]\\
		&\leq \limsup_{\epsilon \rightarrow 0^+}\hat\bP[\sup_{t\in[0,\epsilon)}\|\hat\eta(t)-\eta_0\|_{H^s(0,L)}>\frac1{\delta}-\frac1{\delta_0}] \\
		&\leq \frac1{(\frac1{\delta}-\frac1{\delta_0})}\limsup_{\epsilon\rightarrow 0}\hat\bE[\sup_{t\in[0,\epsilon)}\|\hat\eta(t)-\eta_0\|_{H^s(0,L)}]\\
		&\leq \frac1{(\frac1{\delta}-\frac1{\delta_0})}\limsup_{\epsilon\rightarrow 0}\hat\bE[\sup_{t\in[0,\epsilon)}\|\hat\eta(t)-\eta_0\|^{1-\frac{s}{2}}_{L^2(0,L)}\|\hat\eta(t)-\eta_0\|^{\frac{s}2}_{H^2(0,L)}]\\
		&\leq \frac1{(\frac1{\delta}-\frac1{\delta_0})}\limsup_{\epsilon\rightarrow 0}\hat\bE[\sup_{t\in[0,\epsilon)}\epsilon\|\hat v(t)\|^{1-\frac{s}2}_{L^2(0,L)}\|\hat\eta(t)-\eta_0\|^{\frac{s}2}_{H^2(0,L)}]\\
		&\leq \limsup_{\epsilon\rightarrow 0} \frac{\epsilon}{(\frac1{\delta}-\frac1{\delta_0})}\left( \hat\bE[\sup_{t\in[0,\epsilon)}\|\hat v(t)\|^{2}_{L^2(0,L)}]\right)^{\frac{2-s}{4}}\left(\hat\bE[\sup_{t\in(0,\epsilon)}\|\hat\eta(t)-\eta_0\|^{2}_{H^2(0,L)}]\right)^{\frac{s}4} \\
		&=0.
		\end{align*}
		Hence, by continuity from below, we deduce that for any $\delta>0$, 
		\begin{align}
		\hat\bP[\tau^{(2)}=0,  \|R+\eta_0\|_{H^2(0,L)}<\frac1{\delta}]=0.
		\end{align}
		
		{To estimate $\tau^{(1)}$ we observe that, similarly, } since for any $t\in[0,T]$ we have that $\inf_{z\in(0,L)}(R+\hat\eta(t))\geq \inf_{z\in(0,L)}(R+\eta_0)-\|\hat\eta(t)-\eta_0\|_{L^\infty(0,L)}$, we write for any $\delta_0>\delta$ 
		\begin{align*}
		\hat\bP[\tau^{(1)}=0, \inf_{z\in(0,L)}(R+\eta_0)>\delta_0] &\leq \limsup_{\epsilon \rightarrow 0^+}\hat\bP[\inf_{t\in[0,\epsilon)}\inf_{z\in(0,L)}(R+\hat\eta(t))<\delta,\inf_{z\in(0,L)}(R+\eta_0)>\delta_0]\\
		&\leq \limsup_{\epsilon \rightarrow 0^+}\hat\bP[\sup_{t\in[0,\epsilon)}\|\hat\eta(t)-\eta_0\|_{L^\infty(0,L)}>\delta_0-\delta] \\
		&\leq \frac1{(\delta_0-\delta)^2}\limsup_{\epsilon\rightarrow 0}\hat\bE[\sup_{t\in[0,\epsilon)}\|\hat\eta(t)-\eta_0\|^2_{H^1(0,L)}]\\
		&=0.
		\end{align*}
		Hence for given $\delta>0$, 
		\begin{align}
		\hat\bP[\tau^{(1)}=0, \inf_{z\in(0,L)}(R+\eta_0)>\delta]=0.
		\end{align}
		That is we have
		\begin{align}
		\hat\bP[\tau^{}=0, \inf_{z\in(0,L)}(R+\eta_0)>\delta,  \|R+\eta_0\|_{H^2(0,L)}<\frac1{\delta}]=0.
		\end{align}
		{ Notice that the probability space constructed in Theorem \ref{skorohod2} may depend on $\delta>0$. However, if a pathwise solution (and not only martingale solutions) exists that is defined on the original filtered probability space $(\Omega,\sF, (\sF_t)_{t\geq 0},\bP)$, using the calculations above, we can in fact pass $\delta\to 0$ and prove the existence of a maximal solution that exists until the time the walls of the tube collapse.}
		
		{\bf {Step 2.}} The statement  $
		\hat	\eta^*(t)=\hat\eta(t) \  \text{if } t<\tau,
		$
		can be obtained the same way as in \eqref{etasequal1}.
		
	\end{proof}
}

Thus, by combining Theorem \ref{exist2}, Lemma~\ref{StoppingTime}, and  \eqref{etasequal2}, we obtain the main result of this work.
	\begin{theorem}[Main result]\label{MainTheorem}
		For any given $\delta>0$, if the deterministic initial data  $\eta_0$ satisfies \eqref{etainitial}, then the stochastic processes $(\hat\bu,\hat\eta,\tau)$
		obtained in the limit specified in Theorem~\ref{skorohod2},  along with the stochastic basis constructed in Theorem \ref{skorohod2}, 
		determine a martingale solution 
		in the sense of Definition \ref{def:martingale} of the stochastic FSI problem \eqref{u}-\eqref{ic}, with the stochastic forcing given by \eqref{StochasticForcing}.\\
	\end{theorem}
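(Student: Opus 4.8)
The plan is to check directly that the quadruple $(\mathscr{S},\hat\bu,\hat\eta,\tau)$, with $\mathscr{S}=(\hat\Omega,\hat\sF,(\hat\sF_t)_{t\geq 0},\hat\bP,\hat W)$ the stochastic basis produced in Theorem~\ref{skorohod2} equipped with the filtration defined in \eqref{Ft1}, satisfies each of the five requirements of Definition~\ref{def:martingale}, the data $\bu_0,v_0,\eta_0$ being those fixed in the hypothesis (so that \eqref{etainitial} holds). Items~(1) and~(3) are essentially already in hand: $\hat W$ is a $U$-valued Wiener process with respect to $(\hat\sF_t)_{t\geq 0}$ by construction, so $\mathscr{S}$ is a stochastic basis, and $\tau$ defined in \eqref{stoppingT} is an $(\hat\sF_t)_{t\geq 0}$-stopping time which is $\hat\bP$-a.s.\ strictly positive by Step~1 of Lemma~\ref{StoppingTime}. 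Item~(4) follows from Lemma~\ref{representative}, which furnishes an $(\hat\sF_t)_{t\geq 0}$-progressively measurable representative of $\hat\bU=(\hat\bu,\hat v)$; since $\partial_t\hat\eta=\hat v$ a.s.\ by Theorem~\ref{skorohod2}(3) and $\hat\eta$ is adapted with continuous paths in $H^s(0,L)$, hence progressively measurable, both $\bU=(\hat\bu,\partial_t\hat\eta)$ and $\hat\eta$ are progressively measurable as required.

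For item~(2) I would assemble the regularity recorded in \eqref{sols2}: $\hat\bu\in L^2(\hat\Omega;L^\infty(0,T;\bL^2(\sO))\cap L^2(0,T;V))$ and $\hat\eta\in L^2(\hat\Omega;W^{1,\infty}(0,T;L^2(0,L))\cap L^\infty(0,T;H^2_0(0,L)))$, so that $(\hat\bu,\hat\eta)\in\sW_F\times\sW_S$. The kinematic coupling $\hat\bu(t,z,1)=\partial_t\hat\eta(t,z)\be_r$ is inherited by passing to the limit in the trace identity satisfied at each level of the splitting scheme, using continuity of the trace operator on $V$ together with the almost sure convergences of Theorems~\ref{skorohod} and~\ref{skorohod2} and the fact that $\partial_t\hat\eta=\hat v$; and the constraint $\nabla^{\hat\eta}\cdot\hat\bu=0$ $\hat\bP$-a.s.\ is precisely Step~3 of the proof of Theorem~\ref{exist2}, which gives $\text{div}^{\hat\eta^*(t)}\hat\bu(t)=0$ for a.e.\ $t$, combined with the identification $\hat\eta^*(t)=\hat\eta(t)$ for $t<\tau$ from \eqref{etasequal2}. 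Hence $(\hat\bu,\hat\eta)\in\sW(0,T)$.

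The last and main point is item~(5): the weak formulation \eqref{martingale2} established in Theorem~\ref{exist2} must be upgraded to the form \eqref{weaksol} on the random interval $[0,\tau)$. Here the key input is Lemma~\ref{StoppingTime}: for $t<\tau$ one has $\hat\eta^*(t)=\hat\eta(t)$, and therefore $\nabla^{\hat\eta^*}=\nabla^{\hat\eta}$, $\bD^{\hat\eta^*}=\bD^{\hat\eta}$, $b^{\hat\eta^*}=b^{\hat\eta}$, $\partial_t\hat\eta^*=\partial_t\hat\eta$, $\text{div}^{\hat\eta^*}=\text{div}^{\hat\eta}$ and $G(\hat\bu,\hat v,\hat\eta^*)=G(\hat\bu,\hat v,\hat\eta)$ on $[0,\tau)$. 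Consequently, any $(\hat\sF_t)_{t\geq 0}$-adapted, essentially bounded process $\bQ=(\bq,\psi)$ with $C^1$ paths in $\sD$ satisfying $\nabla^{\hat\eta}\cdot\bq=0$ becomes, once restricted to $[0,\tau)$, an admissible test process for Theorem~\ref{exist2} (since $\text{div}^{\hat\eta}\bq=\text{div}^{\hat\eta^*}\bq=0$ there), and substituting $\hat\eta^*=\hat\eta$ throughout \eqref{martingale2} converts it term by term into \eqref{weaksol} for a.e.\ $t\in[0,\tau)$, $\hat\bP$-a.s.; in particular the stochastic integral $\int_0^t(G(\hat\bu,\hat v,\hat\eta^*)\,d\hat W,\bQ)$ becomes $\int_0^t(\bQ(s),G(\hat\bU(s),\hat\eta(s))\,d\hat W(s))$, which is well defined by Remark~\ref{rem:integral} and matches the stochastic forcing \eqref{StochasticForcing}. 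This establishes that $(\mathscr{S},\hat\bu,\hat\eta,\tau)$ is a martingale solution.

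I expect the only genuinely delicate part to be the bookkeeping of which terms of \eqref{martingale2} carry $\hat\eta^*$ rather than $\hat\eta$ and the verification that all of them agree on the stochastic interval $[0,\tau)$; everything else is a direct transcription of results already proved. Conceptually, the substance of the theorem has already been absorbed by Theorem~\ref{exist2} (the passage to the incompressible limit $\ep\to0$) and by Lemma~\ref{StoppingTime}, whose positivity proof rests on the interpolation bound $\|\hat\eta(t)-\eta_0\|_{H^s}\lesssim\|\hat\eta(t)-\eta_0\|_{L^2}^{1-s/2}\|\hat\eta(t)-\eta_0\|_{H^2}^{s/2}$ together with the uniform estimates on $\hat v$ and $\hat\eta$ in \eqref{sols2}; the present theorem merely packages these ingredients into the form demanded by Definition~\ref{def:martingale}.
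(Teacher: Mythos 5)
Your proposal is correct and follows essentially the same route as the paper: the paper's own proof is precisely the one-line combination of Theorem~\ref{exist2}, Lemma~\ref{StoppingTime} and \eqref{etasequal2}, and your item-by-item verification of Definition~\ref{def:martingale} (regularity from \eqref{sols2}, measurability from Lemma~\ref{representative}, positivity of $\tau$ from Step~1 of Lemma~\ref{StoppingTime}, and replacement of $\hat\eta^*$ by $\hat\eta$ in \eqref{martingale2} on $[0,\tau)$) is exactly the intended unpacking of that combination. The only bookkeeping point you already flag — that a test process with $\nabla^{\hat\eta}\cdot\bq=0$ is only known to satisfy $\text{div}^{\hat\eta^*}\bq=0$ up to $\tau$, while the formulation at times $t<\tau$ involves $\bQ$ only on $[0,t]$ — is handled no more explicitly in the paper than in your write-up.
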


\bibliographystyle{plain}
\bibliography{stochfsi}

\end{document}